\numberwithin{equation}{section}
\newtheorem{theorem}{Theorem}[section]
\newtheorem{conjecture}[theorem]{Conjecture}
\newtheorem{lemma}[theorem]{Lemma}
\newtheorem{proposition}[theorem]{Proposition}
\newtheorem{definition}[theorem]{Definition}
\newtheorem{corollary}[theorem]{Corollary}
\newtheorem{example}[theorem]{Example}
\newtheorem{remark}[theorem]{Remark}
\newcommand{\rank}{\operatorname{rk}}
\newcommand{\vanish}[1]{}
\newcommand{\walkw}{\vec{w}}
\begin{document}

\title{Labeling regions in deformations of graphical arrangements}

\author[G\'abor Hetyei]{G\'abor Hetyei}

\address{Department of Mathematics and Statistics,
  UNC Charlotte, Charlotte NC 28223-0001.
WWW: \tt http://webpages.uncc.edu/ghetyei/.}

\subjclass{Primary 52C35; Secondary 05A10, 05A15, 11B68, 11B83}

\keywords{Farkas' lemma, graphical arrangement, braid arrangement, Shi
  arrangement, Linial arrangement, semi-acyclic tournament}

\date{\today}

\begin{abstract}
Combining Carver's variant of the Farkas' lemma with the Flow Decomposition
Theorem we show that the regions of any
deformation of a graphical arrangement may be bijectively labeled with a
set of weighted digraphs containing directed cycles of negative weight
only. Bounded regions correspond to strongly connected digraphs. The
study of the resulting labelings allows us to add the omitted details in
Stanley's 
proof on the injectivity of the Pak-Stanley labeling of the regions of
the extended Shi arrangement, to generalize the ceiling diagrams in the
deleted Shi and Ish arrangements studied by Armstrong and Rhoades
and to introduce a new labeling of the
regions in the Fuss-Catalan arrangement. We also point out that
Athanasiadis-Linusson labelings may be used to directly count regions
in a class of arrangements properly containing the extended Shi
arrangement and the Fuss-Catalan arrangement.
\end{abstract}
\maketitle

\section*{Introduction}

Counting regions of a hyperplane arrangement is most often performed
by computing its characteristic polynomial and by using Zaslavsky's
formula~\cite{Zaslavsky}. This approach inspires 
combinatorial labelings of the regions in several important
special cases: when all coefficients in all equations are integers, the
finite field method~\cite[Theorem 2.2]{Athanasiadis-charpol} (explained
in detail by Stanley in~\cite[Lecture 5]{Stanley-PC} and in~\cite[Section
  3.11.4]{Stanley-EC1}) yields interesting models. For deformations of graphical
(or affinographic) arrangements, in which all equations are of the form
$x_i-x_j=c$, Whitney's formula~\cite{Whitney} and the gain graph
method~\cite{Berthome-etc,Forge-Zaslavsky,Zaslavsky-perpendicular} open
a gateway to combinatorics. For certain deformations of the braid
arrangement, Gessel's formula on the generating function of
labeled binary trees counted according to ascents and descents along
left or right edges (shown in~\cite{Drake,Kalikow}) has specializations
that may be used to count the regions~\cite{Bernardi,Gessel-Tewari}.   

A fundamentally different approach is to consider regions as sets defined
by inequalities. One of the earliest examples of this approach is the work of
Shi~\cite{Shi}, further important examples include 
Pak-Stanley labeling~\cite{Stanley-tinv}, the Athanasiadis-Linusson
labeling~\cite{Athanasiadis-Linusson} of the regions of the extended Shi
arrangement, Bernardi's annotated $m$-sketches labeling the regions of
the $a$-Catalan arrangement, the ceiling diagrams in the work of Armstrong and
Rhoades~\cite{Armstrong-Rhoades} and the work of Hopkins and
Perkinson~\cite{Hopkins-bigraphical} counting the regions in bigraphical
arrangements. 

A third approach is emerging in the work of Lam and
Postnikov~\cite{Lam-Postnikov-1, Lam-Postnikov-2} who use
chamber counts in a hyperplane arrangement to compute the volumes of
alcoved polytopes.

The present work belongs to the second, inequality based approach. The
main purpose of this paper is to show that combining the Carver-Farkas lemma with the Flow Decomposition Theorem yields an automatic
way to encode the regions with weighted digraphs in any deformation of a
graphical arrangement in such a way that 
regions correspond exactly to the ones in which the weight of the
directed cycles is negative. This approach generalizes the
representation of the regions of the Linial arrangement with semiacyclic
tournaments and associates  bounded regions to strongly connected
digraphs. Using this approach one may easily fill in the omitted details
in Stanley's proof of the injectivity of the Pak-Stanley labeling in the 
extended Shi arrangement, provide an alternative proof of Stanley's formula for
exponential arrangements, describe the ceiling hyperplanes in a
generalized Ish arrangement, and find a new labeling of the regions of the
$a$-Catalan arrangement. This approach also generalizes the partial
orientations introduced by Hopkins and
Perkinson~\cite{Hopkins-bigraphical} to label the regions of a
bigraphical arrangement (they used the same Carver-Farkas lemma)
and naturally explains why sleek posets 
represent the regions of the Linial arrangement and why bounded regions
of an interval order arrangement are in bijection with posets whose
incomparability graph is connected. In an effort to keep the paper's
size manageable, focus is on deformations of the braid arrangement, but
the key ideas apply to the deformations of all graphical arrangements. 

The paper is structured as follows. Section~\ref{sec:labeling} contains
the key bijection between regions of a 
graphical arrangement and weighted digraphs. The weighted digraphs
encoding the regions are described in Theorem~\ref{thm:mincost}, and it
is stated in Theorem~\ref{thm:br} that in this bijection bounded regions
correspond to strongly connected weighted digraphs.  
Section~\ref{sec:og} treats
deformations of the braid 
arrangement which are {\em sparse} in the sense that at most two
hyperplanes are associated to each edge. The Linial arrangement and
interval order arrangement are important examples, but the bigraphical
arrangements~\cite{Hopkins-bigraphical} are also related: they are also
sparse in the above sense, however their underlying graph can be any
simple graph, not only the complete graph. A key new idea is the
notion of {\em gains}, 
using the fact that banning nonnegative directed 
cycles is equivalent to restricting our attention to weighted digraphs
in which negative weights represent costs, and the achievable maximum
gain along any walk is finite. This 
idea is also used in Section~\ref{sec:separated} which treats
deformations of the braid arrangement which contain the hyperplane
$x_i-x_j=0$ for each pair $\{i,j\}$: their regions refine the
regions of the braid arrangement. Due to the 
presence of nonnegative weights, Dijkstra's algorithm can not be used, but
for an important special case, when the 
weight function satisfies the {\em weak triangle inequality}, the gain
function may be computed by building a tree recursively. This gain
function is used in the final Section~\ref{sec:eShi} to introduce a
bijective labeling of the regions of an extended Shi arrangement with labeled
$a$-Catalan paths. It is worth exploring in the future whether some
variant of the gain function could play a role similar to parking
functions in a broader setting. Section~\ref{sec:contiguous} focuses on
integral deformations of the braid arrangement, in which the constants
$c$ appearing in the equations
$x_i-x_j=c$ form a contiguous interval of integers. Among other results
this section contains limits on the sizes of the directed cycles which
we need to check to verify that a weighted digraph represents a nonempty
region.

Sections~\ref{sec:eShi}, \ref{sec:ceiling} and \ref{sec:FC} contain
applications of our approach to the extended Shi, generalized Ish and
$a$-Catalan arrangements, and also extend the use of
Athanasiadis-Linusson diagrams to a broader class of 
arrangements. It turns out that this approach provides the fastest way
to count the regions in the $a$-Catalan arrangement (earlier found by
Bernardi~\cite{Bernardi} in this instance), and also in a class
of hyperplane arrangements properly containing the extended Shi arrangements.
Section~\ref{sec:ceiling} also contains the Simplification Lemma
(Lemma~\ref{lem:simplification}), describing a way to identify those
directed edges in our weighted digraphs which do not represent facet
inequalities of the represented region, but inequalities that follow
from inequalities represented by other edges. The idea behind this
simplification generalizes the notion of a ceiling diagram in the work
of Armstrong and Rhoades~\cite{Armstrong-Rhoades}.   

The Carver-Farkas lemma is likely suitable to count
regions of a hyperplane arrangement in many other settings as well. 

\section{Preliminaries}

\subsection{Two classical results}
The following variant of the Farkas' lemma has also been used
in~\cite{Hopkins-bigraphical}. It is originally due to
Carver~\cite[Theorem 3]{Carver}, who stated it in a slightly different
form. The formulation below may be found
in~\cite{Roos}.

\begin{lemma}[Carver-Farkas]
\label{lem:Farkas}  
Let $A$ be a real $m\times n$ matrix and let $b$ be a real $n\times 1$
column vector. Then the system of inequalities $Ax<b$ has no solution if
and only if there is a nonzero real $m\times 1$ row vector $y$ satisfying
$y\geq 0$, $yA=0$ and $yb\leq 0$. 
\end{lemma}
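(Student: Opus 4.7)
The forward direction I would handle directly. Suppose a nonzero $y\geq 0$ satisfies $yA=0$ and $yb\leq 0$, yet some $x$ solves $Ax<b$. Since $y_i\geq 0$ and $(Ax)_i<b_i$ for every $i$, we get $y_i(Ax)_i\leq y_ib_i$ with equality only when $y_i=0$. Because at least one component $y_{i_0}$ is strictly positive, summing preserves strict inequality: $(yA)x<yb$. But $yA=0$ makes the left side zero, while $yb\leq 0$; contradiction.

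For the reverse direction I would homogenize by introducing a scalar slack $t$, turning the affine strict system into a purely homogeneous one. Observe that $Ax<b$ is solvable if and only if the homogeneous strict system
\[
\begin{pmatrix} A & -b\\ 0 & -1\end{pmatrix}\binom{x}{t}<0
\]
has a solution: any such $(x,t)$ has $t>0$, so $x/t$ solves $Ax<b$; conversely a solution of $Ax<b$ paired with $t=1$ satisfies the displayed system. Under the hypothesis that $Ax<b$ has no solution, neither does this homogeneous system.

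Now I would invoke Gordan's theorem, the homogeneous strict variant of the Farkas alternative: a strict system $Mz<0$ has no solution if and only if some nonzero row vector $w\geq 0$ satisfies $wM=0$. Applied to the matrix above, this produces a nonzero $(y,\lambda)\geq 0$ with $yA=0$ and $-yb-\lambda=0$. Hence $yb=-\lambda\leq 0$, and $y$ itself must be nonzero because otherwise $\lambda=0$ would follow, contradicting $(y,\lambda)\neq 0$. This delivers exactly the certificate the lemma demands.

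The only step with any content is the appeal to Gordan's theorem; one may derive it as a short corollary of the non-strict Farkas lemma or from a hyperplane separation argument applied to the cone generated by the rows of $M$. Equivalently, the whole statement can be extracted from LP duality applied to ``maximize $t$ subject to $Ax+t\mathbf 1\leq b$'', the maximum being $0$ precisely when $Ax<b$ is unsolvable. None of these routes introduces a genuinely new idea, so I anticipate no serious obstacle beyond the bookkeeping of the homogenization; the lemma is essentially a repackaging of standard linear-programming folklore.
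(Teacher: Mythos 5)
Your argument is correct, but note that the paper contains no proof of this lemma at all: it is quoted as a classical result of Carver, in the formulation found in Roos, so there is nothing in-paper to compare against. Your two-step derivation -- the elementary forward direction (summing $y_i(Ax)_i\leq y_ib_i$ with strictness at some $i_0$ with $y_{i_0}>0$ to get $0=(yA)x<yb\leq 0$), and, for the converse, homogenization with a slack variable $t$ followed by Gordan's theorem applied to $\bigl(\begin{smallmatrix} A & -b\\ 0 & -1\end{smallmatrix}\bigr)$ -- is a standard and complete way to supply the missing proof; the one point that genuinely needs care, ruling out $y=0$ via $\lambda=-yb=0$, is handled correctly. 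Two minor remarks. First, the lemma as printed has a dimension slip ($b$ should be $m\times 1$ and $y$ a $1\times m$ row vector); your proof implicitly uses the correct dimensions. Second, in your closing aside the LP \emph{maximize $t$ subject to $Ax+t\mathbf{1}\leq b$} has optimum $\leq 0$, not necessarily $=0$, precisely when $Ax<b$ is unsolvable (the single inequality $0\cdot x<-1$ gives optimum $-1$); this does not affect your main argument, which rests on Gordan's theorem and is sound.
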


The other classical result we need is the  Flow Decomposition
Theorem~\cite[Theorem 8.8]{Korte-Vygens}, originally due to
Gallai~\cite{Gallai}. We apply it in the 
following setting. Consider a directed graph with edge set $E$. A {\em
  circulation} is a function $f:E\rightarrow {\mathbb R_{\geq 0}}$ satisfying
$\sum_{v=t(e)}
f(e)=\sum_{v=h(e)} f(e)$
at every vertex $v$.  Here $t(e)$ and 
$h(e)$ denote the tail, respectively the head
of $e$. We set no capacity constraint (upper bound) on the values
$f(e)$. The simplest example of a nonzero circulation is supported by a {\em
  directed cycle} $(e_1,e_2,\ldots,e_k)$ on $k$ edges and $k$ vertices, where
$h(e_i)=t(e_{i+1})$ holds for
$i=1,2,\ldots, k-1$ and
$h(e_k)=t(e_1)$. We identify the
directed cycle $(e_1,e_2,\ldots,e_k)$ 
with the circulation $f$ that assigns $1$ to the edges
$e_1,e_2,\ldots,e_k$ and zero to all other edges. The restriction of the
Flow Decomposition Theorem to circulations is the following
statement. 
\begin{theorem}
\label{thm:circdec}
Every not identically zero circulation $f$ can be written as a positive
linear combination of directed cycles. Moreover, a directed edge $e$
appears in at least one of these cycles if and only if $f(e)>0$.
\end{theorem}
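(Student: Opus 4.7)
The plan is to prove this by induction on the cardinality of the support $S(f) := \{e \in E : f(e) > 0\}$, using a standard greedy cycle-extraction. The inductive step rests on the key claim that whenever $f$ is a nonzero circulation (which is automatically nonnegative by the definition given in the excerpt), $S(f)$ contains at least one directed cycle.

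To establish the claim I would pick any $e_0 \in S(f)$ and iteratively grow a walk inside $S(f)$: at the vertex $h(e_0)$ the conservation equation forces the total outgoing flow to equal the total incoming flow, which is strictly positive because $f(e_0)>0$, so there exists $e_1 \in S(f)$ with $t(e_1) = h(e_0)$. Iterating produces an arbitrarily long walk $e_0, e_1, e_2, \ldots$ in $S(f)$; because the vertex set is finite, some vertex must be revisited, and the portion of the walk between two such visits of the same vertex is a directed cycle $C$ contained in $S(f)$.

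Given such a $C$, I would set $\lambda := \min_{e \in C} f(e) > 0$ and define $f' := f - \lambda \cdot C$, identifying $C$ with its characteristic circulation as in the paper. Then $f'$ is still nonnegative by the choice of $\lambda$, is again a circulation (the circulation conditions are linear and hence preserved under differences), and has strictly smaller support, since at least one edge of $C$ attains the minimum and is driven to $0$. If $f' \equiv 0$ the decomposition is simply $f = \lambda \cdot C$; otherwise the induction hypothesis expresses $f'$ as a positive combination of directed cycles, and prepending $\lambda \cdot C$ yields the required decomposition of $f$.

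The ``moreover'' clause then comes for free: in any positive combination $f = \sum_i \lambda_i C_i$ with all $\lambda_i > 0$ one has $f(e) = \sum_{i : e \in C_i} \lambda_i$, so $f(e) > 0$ if and only if $e$ lies on at least one $C_i$. The only substantive obstacle is the cycle-finding claim; once that is in hand the induction is routine and terminates because the support strictly shrinks at every step.
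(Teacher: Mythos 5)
Your argument is correct, and it fills in a proof that the paper itself does not give: the paper simply quotes Theorem~\ref{thm:circdec} as the restriction to circulations of the classical Flow Decomposition Theorem (Korte--Vygen, Theorem 8.8; Gallai), so there is no internal proof to compare against. What you wrote is the standard self-contained derivation: since $f$ is nonnegative by definition, flow conservation at $h(e_0)$ shows the support $S(f)=\{e: f(e)>0\}$ has positive out-flow at every vertex reached by an edge of $S(f)$, the walk-growing plus pigeonhole argument produces a closed walk in $S(f)$, subtracting $\lambda=\min_{e\in C}f(e)$ times the cycle preserves the circulation property and strictly shrinks the support, and induction on $|S(f)|$ finishes; the ``moreover'' clause follows from $f(e)=\sum_{i:\, e\in C_i}\lambda_i$ with all $\lambda_i>0$. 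Two small points to make explicit: you should truncate the walk at the \emph{first} repeated vertex so that the extracted $C$ is a genuine directed cycle (with no repeated vertices or edges), which is what the paper needs when it identifies a cycle with its $0$--$1$ characteristic circulation; and your induction in fact yields the usual quantitative bonus that at most $|E|$ cycles are needed, since each extraction kills at least one edge of the support. With those remarks your proposal is a complete and correct replacement for the external citation, obtained by the same greedy cycle-extraction that underlies the cited theorem.
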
  
We will apply Theorem~\ref{thm:circdec} to digraphs with {\em a cost
  function} assigning a (possibly negative) cost $c(e)$ to each directed
edge $e$, where the cost of a circulation $f$ is the sum $c(f)=\sum_{e\in 
  E} c(e)\cdot f(e)$.

\subsection{Deformations of a graphical arrangement}

A hyperplane arrangement ${\mathcal A}$ is a finite collection of
hyperplanes in a $d$-dimensional real vector space, which partition
the space into regions. We may use the poset
$L_{{\mathcal A}}$ of nonempty intersections (ordered by reverse
inclusion) of the hyperplanes to count the regions. The {\em
  characteristic polynomial} $\chi({\mathcal A},q)$ of the arrangement
is defined as 
\begin{equation}
\chi({\mathcal A},q)=\sum_{x\in L_{{\mathcal A}}} \mu(\widehat{0},x)
q^{\dim(x)}, 
\end{equation}
where $\mu(x,y)$ is the M\"obius function of $L_{{\mathcal A}}$ and
  $\widehat{0}$ is the entire vector space.   
The numbers $r({\mathcal A})$ and $b({\mathcal A})$ of all, respectively
bounded regions may be found using
{\em Zaslavsky's formulas}~\cite{Zaslavsky}, stating  
\begin{equation}
  \label{eq:Zaslavsky}
r({\mathcal A})=(-1)^d \chi({\mathcal A},-1)\quad\mbox{and}\quad 
b({\mathcal A})=(-1)^{\rank(L_{{\mathcal A}})} \chi({\mathcal A},1).
\end{equation}
Various methods are known to compute the characteristic polynomial.
In the case when all equations have only integer coefficients, the
characteristic polynomial may be computed using the finite field
method~\cite[Theorem 2.2]{Athanasiadis-charpol}. When $\mathcal{A}$ is
constructed from a graph, Whitney's formula~\cite{Whitney} or the gain graph
method~\cite{Berthome-etc,Forge-Zaslavsky,Zaslavsky-perpendicular} may be used.
We focus on {\em deformations of the braid
  arrangement}, and our main reference is the work of 
Postnikov and Stanley~\cite{Postnikov-Stanley}. The {\em braid
  arrangement} or {\em Coxeter 
  arrangement of type $A_{n-1}$} is the collection of hyperplanes 
\begin{equation}
x_i-x_j=0,\quad 1\leq i<j\leq n
\end{equation}
in the subspace $V_{n-1}$ of ${\mathbb R}^n$, given by
$x_1+x_2+\cdots+x_n=0$.  
The braid arrangement is also a special case of
a {\em graphical arrangement ${\mathcal A}_G$} induced by a simple
connected undirected graph $G$ with edge set $E(G)$ on the vertex set
$\{1,2,\ldots,n\}$. It consists of the hyperplanes 
\begin{equation}
x_i-x_j=0,\quad \{i,j\}\in E(G)
\end{equation}
in $V_{n-1}$. Hence the braid arrangement is ${\mathcal A}_{K_n}$ where
$K_n$ is the complete graph on $n$ vertices. A {\em deformation} of a
graphical arrangement consists of replacing each hyperplane $x_i-x_j=0$
with a set of hyperplanes
\begin{equation}
\label{eq:dbraid}  
x_i-x_j=a_{ij}^{(1)}, a_{ij}^{(2)}, \ldots, a_{ij}^{(n_{ij})}, \quad
\{i,j\}\in E(G), 
\end{equation}  
where the $n_{ij}$ are nonnegative integers and the $a_{ij}^{(k)}$ are real
numbers.
\begin{remark}
\label{rem:boundedeq}
{\em Our choice to restrict our definition of a graphical arrangement to
  connected graphs only and restrict our hyperplanes to $V_{n-1}$ is a
  consistent generalization of the notation and terminology
  in~\cite{Postnikov-Stanley}. Another option is to consider
  graphical arrangements in the entire space $\mathbb{R}^n$, associate
  graphical arrangements to disconnected undirected graphs as well, but
  consider {\em relative bounded} regions instead. We refer the
  interested reader to~\cite[Definition 1.7]{Hopkins-bigraphical} for
  the exact definitions of that approach. See
  also Remark~\ref{rem:Vn-1}.
}
\end{remark}

Well-studied deformations of the braid
arrangement are the {\em truncated affine arrangements
  $\mathcal{A}^{a,b}_{n-1}$}. The integer parameters $a$ and $b$ satisfy
$a+b\geq 2$ and the hyperplanes are 
$$
x_i-x_j=1-a,2-a,\ldots,b-1 \quad
1\leq i<j\leq n.
$$
In particular, $\mathcal{A}^{0,2}_{n-1}$ is the {\em Linial arrangement}, 
$\mathcal{A}^{1,2}_{n-1}$ is the {\em Shi arrangement} 
$\mathcal{A}^{a,a+1}_{n-1}$ with $a\geq 1$ is the {\em extended Shi
  arrangement}, $\mathcal{A}^{2,2}_{n-1}$ is the {\em Catalan arrangement}, 
and $\mathcal{A}^{a,a}_{n-1}$ with $a\geq 2$ is the {\em $a$-Catalan
  arrangement}.      
\begin{remark}
\label{rem:symmetry}
{\em In the study of truncated affine arrangements, without loss of
  generality we may assume that $a\leq b$ holds: 
  replacing each vector $(x_1,x_2,\ldots,x_n)$ with $(x_n,x_{n-1},\ldots,x_1)$
sends the arrangement $\mathcal{A}^{ab}_{n-1}$ into the arrangement
$\mathcal{A}^{ba}_{n-1}$, since the equation corresponding to $x_i-x_j=c$
is the equation $x_{n+1-j}-x_{n+1-i}=-c$ after the linear
transformation.}    
\end{remark}

\section{Labeling regions in deformations of graphical arrangements}
\label{sec:labeling}

Describing a region in a deformation of a graphical arrangement amounts to
determining whether a system of linear  inequalities of the form
\begin{equation}
\label{eq:dp}  
m_{ij}<x_i-x_j<M_{ij}, \quad 1\leq i< j\leq n 
\end{equation}
has a solution in $V_{n-1}$. Here we assume that $m_{ij}<M_{ij}$ holds for all
$(i,j)$ and we allow $m_{ij}=-\infty$ and $M_{ij}=\infty$
respectively.   
\begin{definition}
We call the solution set of a system of linear inequalities of the form
\eqref{eq:dp} in the set $V_{n-1}$ a {\em weighted digraphical polytope}. 
\end{definition}

\begin{remark}
\label{rem:Vn-1}
{\em When we count regions without regard to their
boundedness, we may equivalently consider the solution set of
\eqref{eq:dp} in ${\mathbb R}^n$. Here the solution set is either empty
or unbounded: for any real number $r$,
replacing each $x_i$ with $x_i+r$ leaves all differences $x_i-x_j$
unchanged. On the other hand, if we subtract $\sum_{i=1}^n x_i$ from each
$x_i$, we obtain a point in $V_{n-1}$.}
\end{remark}  

\begin{definition}
To each system of inequalities~\eqref{eq:dp} we create its {\em
  associated weighted digraph} as follows. For each $i<j$, if
$m_{ij}>-\infty$, we create 
  directed edge $i\rightarrow j$ with weight $m_{ij}$ and if $M_{ij}<\infty$
 we also create a directed edge $i\leftarrow j$ with weight
 $-M_{ij}$. An {\em $m$-ascending cycle} in the associated weighted
 digraph is a directed cycle, along which the sum of the labels is
 nonnegative.  We call the associated weighted digraph {\em $m$-acyclic},
 if it contains no {\em $m$-ascending cycle}.   
\end{definition}

The associated weighted digraph uniquely encodes the
system~\eqref{eq:dp} (but not its solution set). Our definition of an
$m$-ascending cycle is designed  
to match the conventions of labeling the regions of the
Linial arrangement using semiacyclic tournaments, see
Example~\ref{ex:sat} below. To maintain this compatibility we make the
following definition. 
\begin{definition}
\label{def:negcost}  
The cost of an edge in an associated weighted digraph
of a system of inequalities~\eqref{eq:dp} is the negative of its weight.
Equivalently, the weight is the amount we gain by using an edge. 
\end{definition}
Hence an $m$-ascending cycle is a cycle with non-positive cost. Using
Lemma~\ref{lem:Farkas} we obtain the following characterization of
nonempty weighted digraphical polytopes.  

\begin{theorem}
\label{thm:mincost}
A weighted digraphical polytope given by a system of inequalities of the form
\eqref{eq:dp} is not empty if and only if the weighted digraph
associated to~\eqref{eq:dp} is $m$-acyclic. 
\end{theorem}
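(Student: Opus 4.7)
The plan is to combine Carver's form of the Farkas lemma (Lemma~\ref{lem:Farkas}) with the Flow Decomposition Theorem~\ref{thm:circdec}. By Remark~\ref{rem:Vn-1} we may test nonemptiness in $\mathbb{R}^n$ rather than in $V_{n-1}$, so we work in $\mathbb{R}^n$ throughout.

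For the easy direction, suppose the system~\eqref{eq:dp} has a solution $x$ yet the associated weighted digraph contains an $m$-ascending cycle $v_1 \to v_2 \to \cdots \to v_k \to v_1$ with edge weights $w_1, \ldots, w_k$. By construction of the associated digraph, each edge $u \to v$ of weight $w$ records the strict inequality $x_u - x_v > w$. Summing these inequalities around the cycle makes the left-hand side telescope to $0$, while the right-hand side equals $\sum_{i=1}^{k} w_i \ge 0$, a contradiction.

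For the converse, suppose the system has no solution. Writing each edge $e : u \to v$ with weight $w_e$ as the strict inequality $-x_u + x_v < -w_e$ and assembling these into matrix form $Ax < b$, Lemma~\ref{lem:Farkas} yields a nonzero row vector $y \ge 0$, indexed by the edges, with $yA = 0$ and $yb \le 0$. Reading the $j$-th coordinate of $yA = 0$ gives the vertex equation $\sum_{h(e)=j} y_e = \sum_{t(e)=j} y_e$, so $y$ is a nonzero nonnegative circulation, while $yb \le 0$ unwinds to $\sum_e y_e w_e \ge 0$. Theorem~\ref{thm:circdec} then expresses $y$ as a positive combination $\sum_{i=1}^{s} \lambda_i C_i$ of directed cycles, and from $\sum_{i=1}^{s} \lambda_i\, W(C_i) = \sum_e y_e w_e \ge 0$ with each $\lambda_i > 0$ we obtain at least one cycle $C_i$ whose total weight $W(C_i)$ is nonnegative, i.e.\ an $m$-ascending cycle.

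The only real obstacle is sign bookkeeping: the Farkas certificate naturally produces a nonnegative weight sum (equivalently, a nonpositive cost in the sense of Definition~\ref{def:negcost}), which must be reconciled with the $m$-ascending convention. Once this alignment is set up, the theorem is essentially a two-line translation of Corollary~\ref{cor:umin}, with Lemma~\ref{lem:Farkas} producing the certificate and Theorem~\ref{thm:circdec} converting a certificate supported on an arbitrary circulation into one supported on a single directed cycle.
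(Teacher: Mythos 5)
Your proposal is correct and follows essentially the same route as the paper: encode the strict inequalities as $Ax<b$, apply Carver's variant of the Farkas lemma to read the certificate $y\geq 0$, $yA=0$, $yb\leq 0$ as a nonzero circulation of nonnegative total weight, and use the Flow Decomposition Theorem (the paper packages this as Corollary~\ref{cor:umin}) to extract a single $m$-ascending cycle. The only cosmetic difference is that you handle the easy direction by telescoping the inequalities around the cycle, while the paper gets it from the ``if and only if'' in Lemma~\ref{lem:Farkas}; both are fine.
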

\begin{proof}
When we rewrite the inequalities \eqref{eq:dp} in the form  $Ax<b$, we
must rewrite all inequalities $-\infty\neq m_{ij}<x_i-x_j$ as
$x_j-x_i<-m_{ij}$. Let us associate to each such inequality a distinct
variable $u_{ij}$.  We keep all remaining inequalities of the form
$x_i-x_j<M_{ij}\neq \infty$ unchanged, and we associate to each such
inequality a distinct variable $v_{ij}$. By Lemma~\ref{lem:Farkas} the system of
inequalities \eqref{eq:dp} has no solution if an only if there is a pair
of vectors $(u,v)$ where $u=(u_{ij}\::\: 1\leq i<j\leq n, m_{ij}\neq
-\infty)$ and $v=(v_{ij}\::\: 1\leq i<j\leq n, M_{ij}\neq \infty)$,
such that the following are satisfied: 
\begin{enumerate}
\item All coordinates $u_{ij}$ and $v_{ij}$ are nonnegative and at
  least one of the vectors $u$ or $v$ is not zero. 
\item For each $i\in\{1,2,\ldots,n\}$ we have
  $$
-\sum_{j>i} u_{ij}+\sum_{k<i} u_{k,i}-\sum_{k<i} v_{k,i}+\sum_{j>i}
v_{ij}=0.  $$
\item The inequality
\begin{equation}
\label{eq:mflow}    
-\sum_{i<j} u_{ij}\cdot m_{ij}+\sum_{i<j} v_{ij}\cdot M_{ij}\leq 0
\quad\mbox{holds}. 
\end{equation}
\end{enumerate}
Condition (2) amounts to stating the following: if for
each $i<j$ satisfying $m_{ij}\neq -\infty$ we let $u_{ij}\geq 0$
units flow from $i$ to $j$ and for each $i<j$ satisfying $M_{ij}\neq\infty$
we let $v_{ij}\geq 0$ units flow from $j$ to $i$, we obtain a 
  circulation. Let us call a circulation {\em
  $m$-ascending}  if it satisfies \eqref{eq:mflow}. If we think of the
numbers $-m_{ij}$ and $M_{ij}$ as costs, then an $m$-ascending
circulation is simply a circulation whose cost is not positive. 
By Theorem~\ref{thm:circdec} there is a nonzero $m$-ascending circulation
if and only if there is also an $m$-ascending cycle. 
\end{proof}  

\begin{remark}
  {\em The net flow between $i$ and $j$ does not
    change if we  decrease both $u_{ij}$ and $v_{ij}$ by the same
    positive real number $r$, whereas the sum on the left hand side of
    \eqref{eq:mflow} decreases by $M_{ij}-m_{ij}\geq 0$. Hence if there
    is an $m$-ascending circulation, then there is also such a
    circulation in which for any $i<j$ at most one of $u_{ij}$ and $v_{ij}$ is
    positive. A related observation is that there is an $m$-ascending
    cycle of length $2$ if and only if $M_{ij}-m_{ij}< 0$ holds for some
    $i<j$: in this case $m_{ij}<x_i-x_j<M_{ij}$ has no solution.}    
\end{remark}

\begin{remark}
\label{rem:extend}  
  {\em We may also introduce an edge
    $i\rightarrow j$ (respectively $i\leftarrow j$) of weight $-\infty$
    whenever $i<j$ and $m_{ij}=-\infty$ (respectively $M_{ij}=\infty$)
    holds, and we may extend the addition operation to ${\mathbb R}\cup
    \{-\infty\}$ by setting $r+(-\infty)=(-\infty)$ 
for any $r\in {\mathbb R}\cup \{-\infty\}$. This addition makes no
substantial difference as no $m$-ascending cycle could contain a
directed edge of weight $-\infty$.}  
\end{remark}
  
Theorem~\ref{thm:mincost} may be rephrased as follows. 
\begin{corollary}
\label{cor:Floyd}
Consider the associated weighted digraph $D$ encoding a system of
inequalities of the form~\eqref{eq:dp} and think of the weight $w(e)$ as money
we gain when we walk from the tail of the edge $e$ to its head. Then
the system of inequalities~\eqref{eq:dp} has a nonempty solution set if
and only if we lose money along any closed walk. 
\end{corollary}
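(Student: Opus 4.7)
My plan is to reduce the claim directly to Theorem~\ref{thm:mincost} by showing that the gain along an arbitrary closed walk in $D$ can be expressed as a positive linear combination of the gains along directed cycles; once this is known, the $m$-acyclicity criterion supplied by Theorem~\ref{thm:mincost} immediately lifts from cycles to closed walks. One direction is trivial, since every directed cycle is itself a closed walk; the real work is in the other direction.

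First I would invoke Theorem~\ref{thm:mincost} to rephrase solvability of~\eqref{eq:dp} as the condition that every directed cycle in $D$ has strictly negative total weight, i.e.\ loses money. Next, given any closed walk $W=(e_1,e_2,\ldots,e_k)$ in $D$, I would associate to it the function $f_W:E\to\mathbb{Z}_{\geq 0}$ defined by letting $f_W(e)$ count the number of times $e$ appears in $W$. Because $W$ enters and leaves each vertex equally often, $f_W$ satisfies the circulation condition used in Theorem~\ref{thm:circdec}. The total weight accumulated along $W$ is then exactly $\sum_{e\in E} w(e)\cdot f_W(e)$.

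Applying Theorem~\ref{thm:circdec} to $f_W$ produces a decomposition $f_W=\sum_i \lambda_i\,\chi_{C_i}$ with $\lambda_i>0$ and each $C_i$ a directed cycle, so that
\[
\sum_{e\in E} w(e)\cdot f_W(e) \;=\; \sum_i \lambda_i\, w(C_i).
\]
If every directed cycle satisfies $w(C_i)<0$, then this combination is strictly negative whenever $f_W$ is not identically zero, which is the case as soon as $W$ uses at least one edge. Thus the condition ``no $m$-ascending cycle'' upgrades to ``every nonempty closed walk loses money,'' which, together with the trivial converse, is the equivalence we want.

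The only subtlety I expect is the conceptual point that a closed walk --- allowed to repeat both edges and vertices --- nonetheless determines a genuine integer-valued circulation via edge multiplicities, and so falls within the scope of the Flow Decomposition Theorem. Once this observation is made, there is essentially nothing left to do beyond chaining together Theorem~\ref{thm:mincost} and Theorem~\ref{thm:circdec}.
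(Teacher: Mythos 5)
Your proof is correct and takes essentially the same route as the paper, which presents this corollary as a direct rephrasing of Theorem~\ref{thm:mincost}: the only content to supply is that closed walks and directed cycles impose the same strict-negativity condition, which you justify by viewing the edge-multiplicity function of a closed walk as a circulation and invoking Theorem~\ref{thm:circdec}. (The paper leaves this step implicit; an elementary cycle-peeling of the walk would also suffice, but your appeal to the Flow Decomposition Theorem is perfectly valid.)
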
   
In other words, $m$-acyclic weighted digraphs are exactly the ones to
which the Floyd-Warshall algorithm may be applied to find a unique
minimum cost directed path between any pair of vertices.  

\begin{example}
  \label{ex:sat}
  {\em Each region of the Linial arrangement $\mathcal{A}^{0,2}_{n-1}$
    is described by a set of inequalities
$$
m_{ij}<x_i-x_j<M_{ij}, \quad 1\leq i< j\leq n 
$$
where each of these inequalities is either $-\infty<x_i-x_j<1$ or
$1<x_i-x_j<\infty$. The associated weighted digraph is a {\em
  tournament}: for each pair $i<j$, exactly one of the directed
edges $i\rightarrow j$ (of weight $1$, an {\em ascent}) and $i\leftarrow
j$ (of weight $-1$, a {\em descent}) belongs to the digraph. In this
setting, a directed cycle is $m$-ascending if and only if the number
of directed edges of weight $-1$ does not exceed the number of directed
edges of weight $1$ in it. This is precisely the
definition of an {\em ascending cycle} in~\cite{Postnikov-Stanley}, they
state it in terms of the numbers of ascents and descents. 
By the definition of~\cite{Postnikov-Stanley} a tournament is {\em
  semiacyclic} if and only if it contains no ascending cycle. The observation
that semiacyclic tournaments are in bijection with the regions of the
Linial arrangement was independently made by Postnikov and Stanley and
by Shmulik Ravid.}  
\end{example}  

Our next result helps identify the bounded regions in a deformation of
the braid arrangement. 

\begin{theorem}
\label{thm:br}
A weighted digraphical polytope, given by a system of inequalities of the form
\eqref{eq:dp}, is not empty and bounded if and only if the associated
weighted digraph is $m$-acyclic and it is strongly connected. 
\end{theorem}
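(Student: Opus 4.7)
The strategy is to build on Theorem~\ref{thm:mincost}, which already equates nonemptiness of the polytope with $m$-acyclicity of the associated digraph. Assuming $m$-acyclicity, I would reduce the boundedness question to identifying the recession cone of the (open) polytope. The first observation to make is that every directed edge $i\to j$, whether it arose from a lower bound $m_{ij}$ (for $i<j$) or from the negative of an upper bound $M_{ji}$ (for $j<i$), encodes the uniform strict inequality $x_i-x_j>w$, where $w$ is its weight. Consequently the recession cone equals
\[
\{d\in V_{n-1}\::\:d_i\geq d_j\text{ for every directed edge }i\to j\},
\]
and the polytope is bounded if and only if this cone is trivial; the preservation property is immediate, since $d_i\geq d_j$ together with $x_i-x_j>w$ yields $(x_i+\lambda d_i)-(x_j+\lambda d_j)>w$ for every $\lambda\geq 0$.

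For ``strongly connected $\Rightarrow$ bounded'', any $d$ in the recession cone is nonincreasing along every directed walk, so strong connectedness (walks in both directions between any two vertices) forces $d_i=d_j$ for all $i,j$; combined with $\sum_i d_i=0$ this yields $d=0$. For the converse, suppose the digraph is not strongly connected. Then some vertex $v_0$ has reachable set $T\subsetneq\{1,\ldots,n\}$, and $T$ is forward-closed by transitivity of reachability, so no edge exits $T$. I would then set $d$ equal to the positive constant $|T|$ on $\{1,\ldots,n\}\setminus T$ and the negative constant $-(n-|T|)$ on $T$; this lies in $V_{n-1}$, is nonzero, and the condition $d_i\geq d_j$ holds trivially inside each of $T$ and its complement, strictly on the edges from the complement into $T$, and vacuously on edges from $T$ into the complement, which do not exist by forward-closedness. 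Hence the recession cone is nontrivial and the polytope is unbounded.

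The main obstacle is the unboundedness direction, which requires exhibiting an explicit direction of recession with the correct sign pattern and the zero-sum constraint of $V_{n-1}$. The key insight is that failure of strong connectedness furnishes a nonempty proper forward-closed set $T$, and the ``opposite signs on $T$ versus its complement'' ansatz reduces the recession condition to a single sign comparison on the boundary edges, which by forward-closedness cross in only one direction. A secondary technicality is that the polytope is open, so the passage from strict inequalities defining it to the nonstrict recession-cone condition $d_i\geq d_j$ needs a moment's verification, but ``strict plus nonnegative is strict'' handles it cleanly.
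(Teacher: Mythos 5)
Your proposal is correct, and its unbounded half is essentially the paper's argument in different clothing: your forward-closed reachable set $T$ plays the role of the paper's partition $(V_1,V_2)$ with no directed edge from $V_2$ to $V_1$, and your vector $d$ (constant $|T|$ off $T$, constant $-(n-|T|)$ on $T$, summing to zero) is exactly the paper's perturbation by $t/|V_1|$ and $-t/|V_2|$ recast as a recession direction. Where you genuinely diverge is the bounded half. The paper proves that strong connectedness forces boundedness by induction on $n$, eliminating the variable $x_n$ in Fourier--Motzkin style: each pair $i\in\operatorname{In}(n)$, $j\in\operatorname{Out}(n)$ produces an implied bound on $x_i-x_j$, the reduced digraph on $\{1,\dots,n-1\}$ remains strongly connected, and boundedness of the original region is pulled back from the smaller polytope. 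You instead note that every edge $i\to j$ of weight $w$ encodes $x_i-x_j>w$, identify the relevant cone as $\{d\in V_{n-1}\,:\,d_i\ge d_j \text{ for each edge } i\to j\}$, and annihilate it using strong connectedness together with $\sum_i d_i=0$; this is shorter and makes the bounded/unbounded dichotomy transparent in graph terms. The one point to flag is that your claim ``bounded if and only if this cone is trivial'' silently invokes the classical fact that a nonempty closed convex set (here the polyhedron cut out by the nonstrict inequalities, which contains the region) is unbounded only if it contains a ray, i.e.\ its recession cone is nontrivial; you verify explicitly only the easy preservation direction. That classical result should either be cited or proved by the usual limiting argument on normalized differences $(x_k-x_0)/\lVert x_k-x_0\rVert$. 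In short, the paper's induction buys a self-contained elementary proof at the cost of length, while your recession-cone route buys brevity at the cost of importing a standard theorem of convex geometry.
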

\begin{proof}
By Theorem~\ref{thm:mincost} we may assume that the associated weighted
digraph is $m$-acyclic: this is equivalent to assuming
that our polytope is not the empty set. 

Assume first that the associated weighted digraph is not strongly
connected. Then the set $\{1,2,\ldots,n\}$ may be partitioned into two
disjoint subsets $V_1$ and $V_2$ such that for each $v_1\in V_1$ and
$v_2\in V_2$ the directed edge $v_1\leftarrow v_2$ does not belong to
the associated weighted digraph. Let $(x_1,\ldots,x_n)$ be any point
satisfying \eqref{eq:dp}. We claim that the point $(x_1',\ldots,x_n')$
given by
$$
x_v'=
\begin{cases}
  x_v+\frac{t}{|V_1|},&\mbox{if $v\in V_1$;}\\
    x_v-\frac{t}{|V_2|},&\mbox{if $v\in V_2$.}\\
\end{cases}  
$$
also satisfies \eqref{eq:dp} for all $t>0$. Indeed $x_i'-x_j'=x_i-x_j$
holds if both $i$ and $j$ belong to $V_1$ or both of them belong to
$V_2$. We are left to consider the inequalities where one of the indices
belongs to $V_1$ and the other to $V_2$. Without loss of generality we
may assume $i\in V_1$ and $j\in V_2$. If there is any bound on
$x_i-x_j$, it is of the form $m_{ij}<x_i-x_j$ if $i<j$ and it is of the
form $x_j-x_i<M_{ji}$ if $i>j$.  In either case, the inequality is even
more valid if we increase $x_i$ to $x_i'$ and we decrease $x_j$ to
$x_j'$. Observe finally that we increased $|V_1|$ coordinates by
$t/|V_1|$ and we decreased $|V_2|$ coordinates by
$t/|V_2|$, hence $\sum_{i=1}^n x_i'=\sum_{i=1}^n x_i=0$. 
The value of $t$ is not bounded from above, hence our
digraphical polytope must be unbounded.

Assume next that the associated weighted digraph $D$ defining a weighted
digraphical polytope $P\subset V_{n-1}$ is strongly
connected. We show by induction on $n$ that $P$ is bounded. There is
nothing to prove for $n=1$: $V_0$ consists of a single point and every
directed graph with a single vertex is strongly connected. For $n=2$,
the region defined by $m_{1,2}<x_1-x_2<M_{1,2}$ is an open interval, and
it is bounded exactly when both $m_{1,2}\neq -\infty$ and $M_{1,2}\neq
\infty$ hold, which is precisely the case when the vertices $1$ and $2$
are linked by directed edges both ways. Assume from now on that $n>2$
holds. Since the digraph is strongly connected, neither the set
$\operatorname{In}(n)=\{i\: : \: i\rightarrow n\}$ nor the set
$\operatorname{Out}(n)=\{j\: : \: n\rightarrow j\}$ is empty. Each weighted
arrow  $i\rightarrow n$ represents an inequality $m_{in}<x_i-x_n$ which
we rearrange as $x_n<x_i-m_{in}$. Each weighted directed edge  $n\rightarrow j$
represents an inequality $x_j-x_n<M_{jn}$ which we rearrange as
$-x_n<M_{jn}-x_j$. For each pair $(i,j)$ with
$i\in\operatorname{In}(n)$ and $j\in\operatorname{Out}(n)$, we take the
sum of the inequalities represented by $i\rightarrow n$ and $j\leftarrow
n$ and obtain $0<x_i-m_{in}+M_{jn}-x_j$. We rearrange this inequality
as $m_{in}-M_{jn}<x_i-x_j$ if $i<j$ and as $x_j-x_i<-m_{in}+M_{jn}$
if $j<i$. We add these inequalities, remove the inequalities
involving $x_n$, and we obtain the definition of a weighted digraphical
polytope $P'\subset V_{n-2}$. (Note that we may have created several
upper and lower bounds for the same $x_i-x_j$ but we only need to
consider the greatest lower bound and the least upper bound.) Its 
associated weighted digraph $D'$ is still strongly connected: if a
directed path, connecting two elements of the set $\{1,2,\ldots,n-1\}$
does not pass through $n$ then the same walk is also present in $D'$, if
it passes through $n$ then we may replace the subwalk $i\rightarrow
n\rightarrow j$ in $D$ with a single edge $i\rightarrow j$ in $D'$ as we
created the edge $i\rightarrow j$ when we took the sum of the
inequalities associated to $i\rightarrow n$ and $n\rightarrow j$. By the
induction hypothesis $P'$ is a bounded (or empty) polytope. Since $P'$
is bounded, there is a cube $[-R,R]^{n-1}$ containing it for
some $R>0$.  Consider now
any point $(x_1,x_2,\ldots,x_n)\in P$ and let
$s=(\sum_{i=1}^{n-1} x_i)/(n-1)$. We claim that the point
$(x_1-s,x_2-s,\ldots,x_{n-1}-s)$ belongs to $P'$. Indeed, for any
$i,j\in\{1,2,\ldots,n-1\}$ the difference $(x_i-s)-(x_j-s)$ is the same
as $x_i-x_j$, and every bound imposed on $x_i-x_j$ in $P'$ is a
consequence of the linear inequalities defining $P$. Furthermore, we
have $\sum_{i=1}^{n-1} (x_i-s)=0$. (In particular, $P'$ is not empty.)
Using any $i\in\operatorname{In}(n)$ and any
$j\in\operatorname{Out}(n)$, we may write 
$$
-R-M_{jn}\leq x_j-s-M_{jn}<x_n-s<x_i-s-m_{in}\leq R-s-m_{in}.
$$
As a consequence, $x_n-s$ is also bounded, there is an $R^*>0$ such that
$(x_1-s,x_2-s,\ldots,x_n-s)\in [-R^*,R^*]^n$. Observe finally that
$\sum_{i=1}^n x_i=0$ implies $(n-1)s+x_n=0$, that is, $x_n-s=-ns$. Since
$x_n-s$ is bounded, so is $s=(x_n-s)/(-n)$ and the vector
$(x_1,x_2,\ldots,x_n)$, obtained by adding $(s,s,\ldots,s)$ to
$(x_1-s,x_2-s,\ldots,x_n-s)$, is also bounded.   
\end{proof}  

\begin{remark}
{\em A variant of Theorem~\ref{thm:br} is stated in~\cite[Theorem
    1.8]{Hopkins-bigraphical}. As noted in Remark~\ref{rem:boundedeq},
  the paper of Hopkins and Perkinson considers relative bounded regions
  in $\mathbb{R}^n$ instead of bounded regions in $V_{n-1}$. Their
  result may be generalized to deformations of 
graphical arrangements as follows: if we define the associated
weighted digraphs the same way as before, then relative bounded regions
correspond to digraphs whose strongly connected components are the same as the
weakly connected components. We leave the proof of this variant to the
reader.}    
\end{remark}

\begin{corollary}
\label{cor:csat}
The bounded regions of the Linial arrangement are in bijection with the
strongly connected semiacyclic tournaments.   
\end{corollary}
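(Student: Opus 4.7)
The plan is to derive the corollary as an immediate specialization of Theorem~\ref{thm:br} to the setting described in Example~\ref{ex:sat}. The two ingredients have already been assembled: Example~\ref{ex:sat} identifies the weighted digraph associated to a Linial region with a tournament and notes that $m$-acyclicity for such a tournament is exactly the classical notion of semiacyclicity, while Theorem~\ref{thm:br} characterizes non-empty bounded weighted digraphical polytopes as those whose associated weighted digraph is both $m$-acyclic and strongly connected. Nothing beyond stitching these two observations together should be required.

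Concretely, I would proceed in three short steps. First, recall from Example~\ref{ex:sat} that the map sending a region $R$ of $\mathcal{A}^{0,2}_{n-1}$ to the associated weighted digraph of its defining inequalities is a bijection between regions and semiacyclic tournaments on $\{1,2,\ldots,n\}$; in particular this restricts to a bijection between the non-empty regions and the semiacyclic tournaments. Second, note that a region $R$ is bounded in $V_{n-1}$ if and only if the corresponding weighted digraphical polytope is bounded, so Theorem~\ref{thm:br} applies and says that $R$ is bounded precisely when its associated tournament is $m$-acyclic (i.e.\ semiacyclic) and strongly connected. Third, combine these to conclude that the Example~\ref{ex:sat} bijection restricts to a bijection between bounded regions of the Linial arrangement and strongly connected semiacyclic tournaments.

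There is essentially no obstacle here; this is a direct corollary and the only thing worth spelling out is the compatibility check that the notion of boundedness used in Theorem~\ref{thm:br} (boundedness of the weighted digraphical polytope in $V_{n-1}$) coincides with boundedness of a region of $\mathcal{A}^{0,2}_{n-1}$, which is immediate from the definition of the Linial arrangement as a hyperplane arrangement in $V_{n-1}$. Hence the proof can be stated in one or two sentences invoking Example~\ref{ex:sat} and Theorem~\ref{thm:br}.
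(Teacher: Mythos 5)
Your proposal is correct and matches the paper's intended argument exactly: the corollary is stated there as an immediate consequence of Theorem~\ref{thm:br} together with the identification in Example~\ref{ex:sat} of Linial regions with semiacyclic tournaments, which is precisely the two-ingredient combination you describe. No further detail is needed.
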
  

\begin{remark}
  {\em
Athanasiadis computed the number $b({\mathcal L}_n)$ of bounded regions
of the Linial arrangement~\cite[Theorem 4.2]{Athanasiadis-charpol}  
and raised the question whether there is a combinatorial interpretation
of the numbers $b({\mathcal L}_n)$ similar to the combinatorial
interpretation of the number of all regions given by Postnikov and
Stanley~\cite{Postnikov-Stanley}. Corollary~\ref{cor:csat} provides a
new response to this question. Previous models were provided by
Tewari~\cite[Theorem~1.1]{Tewari} and by Fl\'orez and Forge~\cite[Theorem
2.2]{Forge}. It is also worth pointing out that Bernardi~\cite{Bernardi}
labels the regions of the Linial arrangement with trees which arise
after considering the regions of the Linial arrangement as unions of
regions of the Catalan arrangement. Bernardi does not address the
question which trees represent bounded regions, but looking
at~\cite[Fig.\ 10]{Bernardi} it is easy to notice that a region of the
Catalan arrangement is bounded if and only if Bernardi's associated
$1$-sketch is connected in the sense that there is no way to place a
vertical line separating the points in the diagram into two nonempty
sets without intersecting at least one arc. This statement is easily shown
using the proof techniques of Theorem~\ref{thm:br}. It is an interesting
  question for future research whether this observation can be extended
  to Bernardi's labeled trees encoding the regions of the Linial
  arrangement.}    
\end{remark}

We may generalize our observations regarding the Linial arrangement
to an arbitrary deformed graphical arrangement $\mathcal{A}$ as follows. 
Assume $\mathcal{A}$ is given by \eqref{eq:dbraid}.
Without loss of generality we may assume that for each ordered pair
$(i,j)$ satisfying $i<j$ the numbers $a_{ij}^{(1)}, a_{ij}^{(2)},
\ldots, a_{ij}^{(n_{ij})}$ are listed in increasing order. 
Each region of $\mathcal{A}$ is a weighted digraphical polytope and
  it is described by a system of inequalities~\eqref{eq:dp} where each
  $m_{ij}$ is either $-\infty$ or some element of the set 
$A_{ij}=\{a_{ij}^{(1)}, a_{ij}^{(2)}, \ldots, a_{ij}^{(n_{ij})}\}$
and $M_{ij}$ is given by the following formula:
\begin{equation}
\label{eq:Mij}  
  M_{ij}=
  \begin{cases}
    a_{ij}^{(1)}, & \mbox{if $m_{ij}=-\infty$};\\
    a_{ij}^{(k+1)}, & \mbox{if $m_{ij}=a_{ij}^{(k)}$ for some $k<n_{ij}$};\\
    \infty & \mbox{if $m_{ij}=a_{ij}^{(n_{ij})}$}.\\ 
  \end{cases}  
\end{equation}
Keeping in mind these possibilities, we define a {\em valid weighted
  digraph} associated to a deformation of a graphical arrangement in
$V_{n-1}$ as
follows. 
\begin{definition}
\label{def:valid}  
Let $\mathcal{A}$ be a deformation of a graphical arrangement, given
by~\eqref{eq:dbraid}. We call a weighted digraph with vertex set
$\{1,2,\ldots,n\}$ {\em valid} if  for each $(i,j)$ satisfying $1\leq
i<j\leq n$ exactly one of the following holds:
\begin{enumerate}
\item $n_{ij}=0$ and there is no directed edge between $i$ and $j$.
\item $n_{ij}>0$, there is no directed edge $i\rightarrow j$, and there
  is exactly one directed edge $i\leftarrow j$ which has weight $-a_{ij}^{(1)}$.
\item $n_{ij}>0$, there is a directed edge $i\rightarrow j$ of weight
  $a_{ij}^{(k)}$, and there is exactly one directed edge $i\leftarrow j$
  which has weight $-a_{ij}^{(k+1)}$, for some $k<n_{ij}$.
\item $n_{ij}>0$, there is exactly one directed edge $i\rightarrow j$
  which has weight $a_{ij}^{(n_{ij})}$, and there is no directed edge
  $i\leftarrow j$. 
\end{enumerate}  
\end{definition}
Since the associated weighted digraphs contain at most one directed
edge $i\rightarrow j$ for any ordered pair $(i,j)$, we may uniquely
encode each such weighted digraph with a {\em weight function}
$$
w: \{1,2,\ldots,n\}\times \{1,2,\ldots,n\} \rightarrow {\mathbb R} \cup
\{-\infty\} 
$$
by setting $w(i,j)$ to be the weight of the directed edge $i\rightarrow j$, if
$i\rightarrow j$ is present in the weighted digraph and $w(i,j)=-\infty$ otherwise.
The $m$-acyclic condition may be then rephrased as follows:
\begin{equation}
\label{eq:m-acyclic}  
w(i_1,i_2)+w(i_2,i_3)+\cdots+w(i_{m-1},i_m)+w(i_m,i_1)<0
\end{equation}  
must hold for any cyclic list $(i_1,i_2,\ldots,i_m)$ of elements of
$\{1,2,\ldots,n\}$. Here we extend the rules of addition to ${\mathbb R} \cup
\{-\infty\}$ as in Remark~\ref{rem:extend}.     
To simplify the notation in all subsequent proofs, we introduce the
shorthand notation 
\begin{equation}
\label{eq:shorthand}  
w(i_1,i_2,\ldots,i_m)=w(i_1,i_2)+w(i_2,i_3)+\cdots+w(i_{m-1},i_m)+w(i_m,i_1)
\end{equation}  
for the total weight of all directed edges along the closed walk
$i_1\rightarrow i_2 \rightarrow \cdots \rightarrow i_m\rightarrow i_1$. 
As a consequence of Theorems~\ref{thm:mincost} and \ref{thm:br}
we have the following result.
\begin{corollary}
\label{cor:bijection}
 Let $\mathcal{A}$ be a deformation of a graphical arrangement, given
by~\eqref{eq:dbraid}. Then the regions of $\mathcal{A}$ are in bijection
with the valid $m$-acyclic weighted digraphs on
$\{1,2,\ldots,n\}$ in such a way that bounded regions correspond to
strongly connected  valid $m$-acyclic weighted digraphs. 
\end{corollary}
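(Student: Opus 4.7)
The plan is to assemble Corollary~\ref{cor:bijection} from the discussion preceding the statement together with Theorems~\ref{thm:mincost} and~\ref{thm:br}; no new ingredient is needed.

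First, I would make the correspondence between regions and systems of inequalities~\eqref{eq:dp} precise. Given a region $R$ of $\mathcal{A}$, pick any point $(x_1,\ldots,x_n)\in R$. For each ordered pair $(i,j)$ with $i<j$: if $n_{ij}=0$, set $m_{ij}=-\infty$ and $M_{ij}=\infty$; otherwise the value $x_i-x_j$ avoids all of $a_{ij}^{(1)}<\cdots<a_{ij}^{(n_{ij})}$, so it lies strictly below $a_{ij}^{(1)}$, strictly above $a_{ij}^{(n_{ij})}$, or strictly between two consecutive $a_{ij}^{(k)}$ and $a_{ij}^{(k+1)}$. In each case we read off $m_{ij}$ and define $M_{ij}$ by~\eqref{eq:Mij}. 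Since $R$ is connected and crossed by no hyperplane of $\mathcal{A}$, this choice is independent of the point chosen, and conversely the resulting system~\eqref{eq:dp} is exactly the set of strict inequalities cutting out $R$ inside $V_{n-1}$.

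Second, I would translate this data into a weighted digraph via Definition~\ref{def:valid}. The four cases of that definition are in one-to-one correspondence with the four possibilities ``$n_{ij}=0$; $x_i-x_j<a_{ij}^{(1)}$; $a_{ij}^{(k)}<x_i-x_j<a_{ij}^{(k+1)}$; $x_i-x_j>a_{ij}^{(n_{ij})}$,'' so the construction $R\mapsto D(R)$ lands in the set of valid weighted digraphs and is injective (a region is determined by the strict inequalities defining it). Conversely, every valid weighted digraph $D$ comes from some system~\eqref{eq:dp}, hence from a (possibly empty) weighted digraphical polytope $P_D\subseteq V_{n-1}$, and $P_D$ is exactly the region corresponding to $D$ when nonempty. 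By Theorem~\ref{thm:mincost}, $P_D\neq\emptyset$ if and only if $D$ is $m$-acyclic in the sense of~\eqref{eq:m-acyclic}. This establishes the bijection between regions of $\mathcal{A}$ and valid $m$-acyclic weighted digraphs on $\{1,\ldots,n\}$.

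Finally, the boundedness statement is immediate from Theorem~\ref{thm:br}: once $P_D$ is nonempty (equivalently, $D$ is $m$-acyclic and valid), $P_D$ is bounded exactly when $D$ is strongly connected. The main obstacle is really just the bookkeeping in the first two paragraphs — verifying that the four-way case analysis in Definition~\ref{def:valid} exactly matches the four-way ``slot'' analysis of where $x_i-x_j$ lies relative to the hyperplanes for the edge $\{i,j\}$, and that the $M_{ij}$ produced by~\eqref{eq:Mij} agrees with the weight $-w(j,i)$ prescribed by the valid weighted digraph. Once this is made explicit, the corollary follows with no further work.
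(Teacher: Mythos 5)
Your proposal is correct and follows essentially the same route as the paper: encode each region by the unique interval containing $x_i-x_j$ for every pair (which is constant on the region, giving a well-defined injective assignment to valid weighted digraphs), then invoke Theorem~\ref{thm:mincost} to characterize nonemptiness by $m$-acyclicity and Theorem~\ref{thm:br} to characterize boundedness by strong connectivity. The extra bookkeeping you supply (the four-way case match with Definition~\ref{def:valid} and the convexity argument that the strict-inequality system cuts out exactly the region) is just a more explicit rendering of what the paper leaves implicit.
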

Indeed, each region created by the hyperplanes of $\mathcal{A}$ may be
uniquely encoded by exactly one valid $m$-acyclic weighted digraph of the
given form: for each $i,j\in \{1,2,\ldots,n\}$ satisfying
$i<j$, the value of $x_i-x_j$ belongs to exactly one interval created by
the set $A_{ij}=\{a_{ij}^{(1)}, a_{ij}^{(2)}, \ldots, a_{ij}^{(n_{ij})}\}$ and
this interval is the same for all points of the same region. The
assignment of valid weighted digraphs to regions is thus injective. By
Theorem~\ref{thm:mincost}, exactly the $m$-acyclic valid weighted digraphs
encode sets of inequalities with a nonempty solution set, and by
Theorem~\ref{thm:br} exactly the strongly connected $m$-acyclic valid
weighted digraphs represent bounded regions. 

The $m$-acyclic property can be independently
verified within each strong component of the weighted digraph as each
cycle of a digraph stays within the same strong component. Hence we
have the following structure theorem. 
\begin{theorem}
\label{thm:dbr}
Assume a deformation of a graphical arrangement given by~\eqref{eq:dbraid}
has the property that $n_{ij}>0$ holds for all $(i,j)$ satisfying $1\leq
i<j\leq n$. Then the associated weighted digraph of any region may be
uniquely constructed as follows.
\begin{enumerate}
\item We fix an ordered set partition $(N_1,N_2,\ldots,N_k)$ of the set
  $\{1,2,\ldots n\}$. The parts of the ordered set partition will be the
  vertex sets of the strong components. For any $(i,j)$ having the
  property that the part containing $i$ precedes the part containing $j$
  there is a directed edge $i\rightarrow j$ but no directed edge
  $i\leftarrow j$. The label on $i\rightarrow j$ must be
  $a_{ij}^{(n_{ij})}$ if $i<j$ and it must be $-a_{ij}^{(1)}$ if $i>j$.
\item On each strong component $N_i$ we may select a
  strongly connected valid $m$-acyclic weighted digraph independently. 
\end{enumerate}
\end{theorem}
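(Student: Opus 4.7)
The plan is to apply Corollary~\ref{cor:bijection} and analyze the condensation of a valid $m$-acyclic weighted digraph $D$ on $\{1,2,\ldots,n\}$ into its strong components. The hypothesis $n_{ij}>0$ for all $i<j$ forces at least one of $i\to j$, $i\leftarrow j$ to be present for every pair, so the underlying undirected graph of $D$ is complete. I would first observe that between any two distinct strong components there is at least one directed edge and that edges cannot go both ways (else a closed walk would merge the two components into one), so the condensation is simultaneously a tournament and a DAG, hence a linear order. This yields the ordered set partition $(N_1,\ldots,N_k)$ of part~(1).

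Next I would show that the inter-component weights are forced. Fixing $1\le i<j\le n$ with $i\in N_a$, $j\in N_b$, $a\ne b$, exactly one of the two directed edges between $i$ and $j$ is present, so in Definition~\ref{def:valid} only cases~(2) and~(4) can apply. If $N_a$ precedes $N_b$, case~(4) applies and the weight of $i\to j$ is forced to be $a_{ij}^{(n_{ij})}$; if $N_b$ precedes $N_a$, case~(2) applies and the weight of the unique edge $i\leftarrow j$ is forced to be $-a_{ij}^{(1)}$, matching the statement once one tracks which endpoint is the source. Conversely, any digraph assembled according to (1) and (2) is valid by construction, and for $m$-acyclicity I would argue that because every inter-component edge points strictly forward in the linear order on parts, any closed walk in $D$ must lie inside a single $N_\ell$; there~\eqref{eq:m-acyclic} holds by the $m$-acyclic choice made on $N_\ell$. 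Uniqueness then follows because $D$ determines its own strong components, and hence both the ordered partition and the restrictions to each block are recovered from $D$.

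The main obstacle is organizational rather than technical: I will have to carefully match the oriented edge labels of Definition~\ref{def:valid} to the parameters $a_{ij}^{(k)}$, which are indexed by unordered pairs, treating the two sub-cases $i<j$ and $i>j$ separately so that the stated weights $a_{ij}^{(n_{ij})}$ and $-a_{ij}^{(1)}$ come out correctly in both orientations. Beyond this bookkeeping, the proof is a direct application of Corollary~\ref{cor:bijection} together with the elementary fact that a directed graph whose underlying undirected graph is complete has a totally ordered condensation.
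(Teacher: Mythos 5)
Your proposal is correct and follows essentially the same route as the paper's (much terser) proof: since $n_{ij}>0$ forces at least one edge between every pair of vertices, the condensation is an acyclic tournament and hence a linear order, the inter-component weights are forced to the single-edge cases of Definition~\ref{def:valid}, and $m$-acyclicity needs to be checked only inside the strong components because closed walks cannot use the forward inter-component edges. No gaps; your extra bookkeeping on the $i<j$ versus $i>j$ labels is exactly what the paper leaves implicit.
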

\begin{proof}
By our assumption each valid weighted digraph has at least one
directed edge between any two vertices, hence the strong components may
be linearly ordered in such a way that for any $i$ in a preceding strong
component and for any $j$ in a succeeding strong component there is a
directed edge $i\rightarrow j$ but no directed edge $i\leftarrow j$.
The weighting of these edges can only be as stated. As noted above,
the $m$-acyclic property only needs to be verified on the strongly connected
components.  
\end{proof}  

In~\cite{Stanley-hit} Stanley considers a sequence
$\mathcal{A}=(\mathcal{A}_1,\mathcal{A}_2,\ldots)$ of deformations
 of the braid arrangement, such that each $\mathcal{A}_n$ is a hyperplane
 arrangement in $\mathbb{R}^n$ and for each $S\subseteq \{1,2,\ldots,n\}$
 he defines $\mathcal{A}^S_n$ as the subcollection of hyperplanes
 $x_i-x_j=c$ of $\mathcal{A}_n$ satisfying $\{i,j\}\subseteq S$. He
 calls such a sequence {\em exponential} if the number
 $r(\mathcal{A}^S_n)$ of regions of $\mathcal{A}^S_n$ depends only on
 $k=|S|$ and it is the number $r(\mathcal{A}_k)$ of regions of $\mathcal{A}_k$.
 Introducing the exponential generating functions
 $$
R_{\mathcal{A}}(t)=\sum_{n\geq 0} r(\mathcal{A}_n)\cdot \frac{t^n}{n!}
\quad\mbox{and}\quad
B_{\mathcal{A}}(t)=\sum_{n\geq 1} b(\mathcal{A}_n)\cdot \frac{t^n}{n!}
$$
for all, respectively the bounded regions, Stanley~\cite[Theorem
  1.2]{Stanley-hit} shows
\begin{equation}
\label{eq:br}  
B_{\mathcal{A}}(t)=1-\frac{1}{R_{\mathcal{A}}(t)}.
\end{equation}  
The outline of the proof cites Zaslavsky's
formula~\eqref{eq:Zaslavsky}, Whitney's formula~\cite{Whitney} and the
exponential formula in enumerative combinatorics. Formula~\eqref{eq:br}
may also be derived from Theorem~\ref{thm:dbr} as follows. Consider an
exponential sequence $\mathcal{A}=(\mathcal{A}_1,\mathcal{A}_2,\ldots)$
of deformations of the braid arrangements. As a consequence of Zaslavsky's
formula~\eqref{eq:Zaslavsky}, for each $n\geq 1$ and for each $S\subseteq
\{1,2,\ldots,n\}$ the number $b(\mathcal{A}^S_n)$ of bounded regions of
$\mathcal{A}^S_n$ also depends only on
 $k=|S|$ and it is the number $b(\mathcal{A}_k)$ of bounded regions of
$\mathcal{A}_k$. To construct an associated weighted digraph of a
region of $\mathcal{A}_n$ we first fix an ordered set partition
$(N_1,N_2,\ldots,N_k)$ of the set $\{1,2,\ldots n\}$. After fixing $k$
and the size $n_i$ of each $N_i$, the number of ways we can select an ordered
set partition is $\binom{n}{n_1,n_2,\ldots,n_k}$. In the next step we
must select a strongly connected valid $m$-acyclic weighted digraph on
each part of our set partition. There are $b(\mathcal{A}_{n_i})$ ways to
perform this step on the part $N_i$. Hence we obtain 
\begin{equation}
  r(\mathcal{A}_n)=\sum_{k=1}^n \sum_{\substack{n_1+\cdots +
      n_k=n\\ n_1,\ldots, n_k>0}}
  \binom{n}{n_1,n_2,\ldots,n_k} \prod_{i=1}^k
  b(\mathcal{A}_{n_i})\quad\mbox{for all $n\geq 1$}. 
\end{equation}
This formula implies $R_A(t)=\sum_{k\geq 0} B_A(t)^k$.

\begin{remark}
{\em 
 In the case when all parameters $m_{i,j}$ and $M_{i,j}$ are integers, a
weighted digraphical polytope is a special case of an {\em alcoved
  polytope} as defined by Lam and Postnikov~\cite{Lam-Postnikov-1,
  Lam-Postnikov-2}. Here we only
consider hyperplanes of the form $x_i-x_j=c_{ij}$, but we also allow
non-integer parameters. To extend our approach to type $B$ arrangements is
a subject of ongoing research. The Lam-Postnikov approach is based on counting
lattice points and unit volume simplices, not any version of the Farkas'
lemma or of Zaslavsky's theorem. This same approach is present in the
work of  Benedetti, Knauer and Valencia-Porras~\cite{Benedetti-Knauer-Valencia}
where it is used to compute the $h^*$-vector of certain matroid
polytopes. It is also worth noting
that some of the earliest examples of weighted digraphical polytopes are the
alcoves introduced in Shi's work on affine sign types corresponding to
an affine Weyl group~\cite[Section 6]{Shi}.}  
\end{remark}  

\section{The poset of gains and sparse deformations}
\label{sec:og}

In this section we introduce a key notion, the {\em poset of gains}
associated to a deformation of a graphical arrangement. Special
instances of this partial order include {\em 
  sleek posets}~\cite[Section 8.2]{Postnikov-Stanley}, encoding the
regions of the Linial arrangement, {\em
  semiorders}~\cite[Section~7]{Postnikov-Stanley} encoding the regions
of the semiorder arrangement and {\em interval orders}~\cite{Stanley-hit}
encoding hyperplane arrangements. In later sections we will also see
that whenever the poset of gains is a linear order, it encodes the
region of the braid arrangement containing the given region. 

Consider a deformation of a graphical arrangement given
by~\eqref{eq:dbraid} and one of its valid $m$-acyclic associated
weighted digraphs. We may use the weights to define a partial order on
the vertex set $\{1,2,\ldots,n\}$ as follows.

\begin{definition}
\label{def:og}  
Given a valid $m$-acyclic weighted digraph $D$ on 
$\{1,2,\ldots,n\}$, we define $i<_D j$ if there is a directed path
$i=i_0\rightarrow i_1 \rightarrow \cdots \rightarrow i_k=j$ such that
the weight of each directed edge $i_s\rightarrow i_{s+1}$ is
nonnegative. We call the set $\{1,2,\ldots,n\}$, ordered by $<_D$ the
{\em poset of gains induced by $D$}. 
\end{definition}

The fact that $i<_D j$ is a partial order is a direct consequence of
the $m$-acyclic property stated in Theorem~\ref{thm:mincost}. In terms
of Corollary~\ref{cor:Floyd}, $i<_D j$ holds if we can find a directed
path from $i$ to $j$ such that 
we do not lose money at any step by walking from $i$ to $j$ using that path.
In general, the poset of gains carries less information than
recording the actual weights, but in some special cases all information
may be reconstructed from it. One example is the
Linial arrangement: its posets of gains are the sleek posets,
see~\cite[Section 
  8.2]{Postnikov-Stanley}. Semiacyclic tournaments and sleek posets are
in bijection: for $i<j$, the relation $i<_D j$ holds exactly when the
$w(i,j)=1$, if $w(i,j)=-1$ then
$i$ and $j$ are incomparable. It has been shown in~\cite[Theorem~8.6]{Postnikov-Stanley} that the length of a minimal 
  ascending cycle is at most $4$, equivalently, sleek posets may be
  characterized by a finite set of excluded subposets on at most $4$
  elements.

  A similar approach may be taken to the {\em semiorder arrangement},
  defined as the set of hyperplanes 
$$
x_i-x_j=-1,1 \quad \mbox{for $1\leq i<j\leq n$ in $V_{n-1}$.}
$$
For these, in any associated
valid weighted digraph, regardless of the order of the numbers $i$ and
$j$, there is either a single directed edge of weight $1$ between $i$
and $j$, or there are two edges of weight $-1$, one in each
direction. The elements $i$ and $j$ are comparable in the poset 
of gains exactly when there is a single directed
edge between them, pointing toward the larger
element. These are exactly the {\em semiorders} as defined
in~\cite[Section~7]{Postnikov-Stanley}.   In analogy
to~\cite[Theorem~8.6]{Postnikov-Stanley} one may directly show
the following:
\begin{proposition}
In a valid weighted digraph associated to the semiorder arrangement, the
length of a shortest $m$-ascending cycle is at most $4$.
\end{proposition}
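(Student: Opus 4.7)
The plan is a proof by contradiction, in the spirit of~\cite[Theorem~8.6]{Postnikov-Stanley}. Suppose the claim fails, take a shortest $m$-ascending cycle $C=v_1\to v_2\to\cdots\to v_m\to v_1$ with $m\geq 5$, and aim to produce a strictly shorter $m$-ascending cycle. Write $e_i=w(v_i,v_{i+1})\in\{-1,+1\}$ (indices taken mod $m$) and $S=\sum_{i=1}^m e_i\geq 0$. Recall that in the semiorder arrangement each unordered pair $\{u,v\}$ is in exactly one of three ``chord configurations'': (I) only the edge $u\to v$ of weight $1$; (II) only the edge $v\to u$ of weight $1$; or (III) both edges present with weight $-1$.

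I first look at each ``distance-$2$ chord'': for every $i$, the unordered pair $\{v_{i-1},v_{i+1}\}$. Two candidate shortcuts are available depending on which chord edges exist---the triangle $v_{i-1}\to v_i\to v_{i+1}\to v_{i-1}$ via the back-edge, and the $(m-1)$-cycle $v_{i-1}\to v_{i+1}\to v_{i+2}\to\cdots\to v_{i-1}$ obtained from $C$ by skipping $v_i$ via the forward-edge. Computing the weight of each candidate against the nine combinations of $(e_{i-1}+e_i,\text{chord type})$ shows that the triple fails to yield a strictly shorter $m$-ascending cycle exactly in one of three ``stuck'' configurations: (A)~$e_{i-1}=e_i=+1$ with chord of Type I and $S=0$; (B)~$\{e_{i-1},e_i\}=\{+1,-1\}$ with chord of Type III and $S=0$; or (C)~$e_{i-1}=e_i=-1$ with chord of Type II.

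If $S\geq 1$, only case~(C) can occur, which forces $e_{i-1}=e_i=-1$ at every stuck triple; but if every triple is stuck then all $e_i=-1$ and $S=-m<0$, a contradiction, so some triple yields the shortcut. If $S=0$, then $m$ is even and $m\geq 6$, and I replace the distance-$2$ analysis by a distance-$3$ analysis: for every $i$ I examine the chord between $v_i$ and $v_{i+3}$ together with the path weight $W_3(i)=e_i+e_{i+1}+e_{i+2}\in\{-3,-1,+1,+3\}$. The two candidate shortcuts are now the 4-cycle $v_i\to v_{i+1}\to v_{i+2}\to v_{i+3}\to v_i$ (via the back-edge) and the $(m-2)$-cycle that replaces $v_i\to v_{i+1}\to v_{i+2}\to v_{i+3}$ by the forward chord. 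The analogous case analysis (twelve combinations) shows a strictly shorter $m$-ascending cycle exists unless $W_3(i)=\pm 3$, i.e.\ unless $e_i=e_{i+1}=e_{i+2}$. If this held for every $i$, all $e_i$ would be equal, forcing $S=\pm m\neq 0$, a contradiction; so some distance-$3$ chord provides the shortcut.

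The main obstacle is purely clerical: the catalogue of the nine distance-$2$ configurations and twelve distance-$3$ configurations must be checked, which is repetitive but involves only adding $\pm 1$'s. Once the stuck cases are pinned down, the global-sum contradictions in both the $S\geq 1$ and $S=0$ branches are immediate.
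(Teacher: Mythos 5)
Your proof is correct, and it supplies something the paper deliberately omits: the paper only asserts the proposition ``in analogy to'' \cite[Theorem~8.6]{Postnikov-Stanley}, citing Scott and Suppes~\cite{Scott-Suppes} for the equivalent statement about semiorders, and gives no argument. It is worth noting that the paper's own machinery for statements of this type (Lemma~\ref{lem:diagonals} and Theorems~\ref{thm:4gen}, \ref{thm:a-bleq1}) does not apply here: those proofs hinge on the identity $w(i,j)+w(j,i)=-1$ for pairs carrying both arrows, which fails for the semiorder arrangement, where opposite arrows both have weight $-1$ and sum to $-2$. This failure is precisely why your distance-$2$ shortcut analysis can get stuck when the cycle weight is $S=0$ (e.g.\ an alternating cycle with all short chords of Type~III), and your second tier — the distance-$3$ chords, where the odd parity of $e_i+e_{i+1}+e_{i+2}$ guarantees that in the double-arrow case one of the $4$-cycle or the $(m-2)$-cycle is nonnegative — is the right repair. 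I checked your stuck catalogue: at distance $2$ the only obstructions are indeed $(e_{i-1}+e_i,\text{type})=(2,\mathrm{I})$ or $(0,\mathrm{III})$ with $S=0$, and $(-2,\mathrm{II})$; at distance $3$ (with $S=0$) a triple is stuck only if $e_i=e_{i+1}=e_{i+2}$, and the global-sum contradictions in both branches are as you state. The one step you leave implicit and should record in a sentence is that a shortest $m$-ascending cycle may be assumed vertex-simple (a nonnegative closed walk through a repeated vertex splits into two closed walks, at least one of which is nonnegative and shorter); you use simplicity both when treating $\{v_{i-1},v_{i+1}\}$ and $\{v_i,v_{i+3}\}$ as pairs of distinct vertices, so that exactly one of the three chord configurations applies, and when asserting that your shortcuts are again directed cycles.
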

The equivalent statement for semiorders was first shown by Scott and
Suppes~\cite{Scott-Suppes}. The Linial arrangement and the semiorder
arrangement are both examples of the following class of hyperplane arrangements.
\begin{definition}
We call a deformation $\mathcal{A}$ of the braid arrangement, given
by~\eqref{eq:dbraid} {\em sparse} if for each $i<j$ the set
$A_{ij}=\{a_{ij}^{(1)},\ldots,a_{ij}^{(n_{ij})}\}$ has one or two elements,
the largest element $a_{ij}=a_{ij}^{(n_{ij})}$ is positive and the
other element (if it exists) is negative. We denote the smaller element by
$-a_{ji}$ if $n_{ij}=2$, and we set  $a_{ji}=\infty$ if $n_{ij}=1$. 
We call~$\mathcal{A}$  {\em Euclidean} if
$a_{ij}+a_{jk}\geq a_{ik}$ holds for each $\{i,j,k\}\subseteq
\{1,2,\ldots,n\}$. 
We call $\mathcal{A}$ a {\em generalized semiorder arrangement} if 
$n_{ij}=2$ holds for all $i<j$.
\end{definition}
The $G$-semiorder arrangements discussed in~\cite{Hopkins-orientations} are
closely related to our generalized semiorder arrangements.
Note that the hyperplane arrangements associated to interval orders
in~\cite{Stanley-hit} are generalized semiorder arrangements in our
setting: if the lengths of a collection of $n$ intervals are given by the 
vector $(\ell_1,\ldots,\ell_n)\in {\mathbb R}_+^n$, the hyperplane
arrangement associated 
to their interval order is given by the equations
$$x_i-x_j=-\ell_j,\ell_i,\quad i<j.$$
In our notation we have $a_{ij}=\ell_i$ for all $i\neq j$ and the
Euclidean property is a direct consequence of the inequality
$\ell_i+\ell_j>\ell_i$. Another example of a sparse Euclidean
deformation of the braid arrangement is the Linial arrangement.   
Regarding Euclidean sparse deformations our key tools are summarized in
the next statement. 
\begin{proposition}
\label{prop:spdesc}  
Consider a Euclidean sparse deformation of the braid arrangement and any valid
$m$-acyclic weighted digraph $D$ associated to it. In the induced poset
of gains, $i<_D j$ holds exactly when there is a single directed
edge $i\rightarrow j$ of positive weight. For any pair $\{i,j\}$ of
incomparable vertices satisfying $i<j$, the edge $j\rightarrow i$ is
always present, and any edge between $i$ and $j$ has negative weight.     
\end{proposition}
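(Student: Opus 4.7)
My plan is to prove the first claim by a case analysis of the edge configurations at the pair $\{i,j\}$ in a valid weighted digraph, driven by the $m$-acyclic condition \eqref{eq:m-acyclic}, and to obtain the second claim as an immediate corollary. The ``if'' half of the first claim is immediate from Definition~\ref{def:og}, since any positive-weight edge $i \rightarrow j$ is by itself a directed path with nonnegative edge weights, forcing $i <_D j$. For the converse, assume $i <_D j$ witnessed by some path $i = i_0 \rightarrow i_1 \rightarrow \cdots \rightarrow i_k = j$ with every edge weight nonnegative. Because sparseness forbids $a_{ij}^{(\ell)} = 0$, each such weight is in fact strictly positive, and so is their sum.

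Assume without loss of generality that $i < j$ in the integer order. Combining Definition~\ref{def:valid} with the sparseness hypothesis produces exactly five possible edge configurations at $\{i,j\}$: (A) $n_{ij}=1$ with only $i \rightarrow j$ of weight $a_{ij}^{(1)}>0$; (B) $n_{ij}=1$ with only $j \rightarrow i$ of weight $-a_{ij}^{(1)}<0$; (C) $n_{ij}=2$ with only $i \rightarrow j$ of weight $a_{ij}^{(2)}>0$; (D) $n_{ij}=2$ with only $j \rightarrow i$ of weight $-a_{ij}^{(1)}>0$; and (E) $n_{ij}=2$ with both edges present carrying weights $a_{ij}^{(1)}<0$ on $i\rightarrow j$ and $-a_{ij}^{(2)}<0$ on $j\rightarrow i$. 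The first claim amounts to showing that $i<_D j$ forces configuration (A) or (C). The decisive tool is to close the witnessing path into a cycle $i_0 \rightarrow i_1 \rightarrow \cdots \rightarrow i_k \rightarrow i_0$ whenever the back-edge $j\rightarrow i$ is available: in configuration (D) this closing edge has positive weight, so the entire cycle has strictly positive total weight, violating \eqref{eq:m-acyclic}.

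The main obstacle is configurations (B) and (E), in which the back-edge $j \rightarrow i$ is negative and \eqref{eq:m-acyclic} alone only yields the bound $\sum_s w(i_s,i_{s+1}) < |w(j,i)|$ rather than a flat contradiction. My approach here is induction on $k$: by applying the first claim to each two-edge subwalk $i_s \rightarrow i_{s+1} \rightarrow i_{s+2}$ one should be able to replace it by a single positive direct edge, shortening the witnessing path. The crux is the base case $k=2$, in which the $m$-acyclic condition on the triangle $i_0 \rightarrow i_1 \rightarrow i_2 \rightarrow i_0$ combined with the sparse edge options at $\{i_0, i_2\}$ must be leveraged to extract a direct positive-weight edge $i_0 \rightarrow i_2$. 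I expect this triangle-level step to be the most technical part of the proof, since it is where the precise sparse structure (uniformity of the $a$-values in Linial and in the semiorder arrangement) does the real work.

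Finally, the second claim is an immediate corollary of the first. If $\{i,j\}$ with $i<j$ is incomparable in $<_D$, then the first claim applied in both orientations excludes configurations (A), (C), and (D), leaving only (B) or (E). In both of these the edge $j \rightarrow i$ is present and every edge between $i$ and $j$ carries negative weight, exactly as asserted.
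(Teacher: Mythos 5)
Your easy direction, your five-case unpacking (A)--(E) of Definition~\ref{def:valid} under sparseness, and your reduction of the second claim to the first are all fine. The problem is that the entire content of the proposition sits in the one step you defer: the base case $k=2$ of your induction, where a two-edge nonnegative path $i\rightarrow v\rightarrow j$ must be shown to force configuration (A) or (C) at $\{i,j\}$. For configurations (B) and (E) you correctly note that \eqref{eq:m-acyclic} only yields $w(i,v)+w(v,j)+w(j,i)<0$ and then express the hope that the sparse structure can be ``leveraged''; it cannot, because that inequality is perfectly satisfiable. Concretely, take $n=3$, $n_{1,2}=n_{2,3}=n_{1,3}=1$, $a_{12}^{(1)}=a_{23}^{(1)}=3/10$, $a_{13}^{(1)}=1$, and the valid weighted digraph with edges $1\rightarrow 2$ of weight $3/10$, $2\rightarrow 3$ of weight $3/10$ and $3\rightarrow 1$ of weight $-1$. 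Its only directed cycle has weight $-2/5$, so it is $m$-acyclic (it encodes the nonempty, in fact bounded, region $x_1-x_2>3/10$, $x_2-x_3>3/10$, $x_1-x_3<1$), yet $1<_D 3$ via the path $1\rightarrow 2\rightarrow 3$ while the only edge between $1$ and $3$ is $3\rightarrow 1$, of negative weight. So the step you leave open is not merely technical: in the generality in which the proposition is stated it fails, and no elaboration of your case analysis can close the gap. What is really needed is an extra hypothesis on the constants, namely that the largest threshold in the direction $i\rightarrow j$ is at most the sum of the largest thresholds in the directions $i\rightarrow v$ and $v\rightarrow j$ (automatic when all thresholds coincide, as in the Linial and semiorder arrangements that motivate the section); under such a hypothesis the closing cycle $i\rightarrow v\rightarrow j\rightarrow i$ has nonnegative weight in cases (B), (D) and (E), contradicting \eqref{eq:m-acyclic}, and your induction then goes through.

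For comparison: the paper offers no argument at all here (``the straightforward verification is left to the reader''), so there is no official proof to measure yours against; your case list is exactly what such a verification would look like, and your own remark that the uniformity of the $a$-values ``does the real work'' is precisely the point at which the statement, as written for arbitrary sparse deformations, breaks down. Note also that the same three-vertex example is strongly connected while its poset of gains is a chain with edgeless incomparability graph, so the difficulty you ran into is intrinsic to the statement and not an artifact of your method.
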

The straightforward verification is left to the reader. An interesting
property of Euclidean sparse deformations of the braid arrangement is
that a bounded region corresponds to a poset of gains with a connected
{\em incomparability graph}. 
\begin{definition}
The {\em incomparability graph} of a partially ordered set is the undirected
graph whose vertices are the elements of the poset, and the edges are
the incomparable pairs of elements. 
\end{definition}

\begin{theorem}
\label{thm:sparsebounded}  
Let $D$ be a valid $m$-acyclic weighted digraph associated to a
Euclidean sparse
deformation of the braid arrangement in $V_{n-1}$. If $D$ is strongly
connected then the incomparability graph of the induced
poset of gains is connected. The converse is also true when
$n_{ij}=2$ holds for all $1\leq i<j\leq n$.   
\end{theorem}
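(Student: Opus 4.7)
My plan is to prove both directions by contrapositive, using Proposition~\ref{prop:spdesc} to translate between the poset-theoretic condition and the edge structure of $D$.

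For the forward direction I would assume the incomparability graph is disconnected, fix a connected component $V_1$ with complement $V_2$, and observe that every cross pair is comparable and hence joined by a single positive-weight edge. The crucial intermediate step is to show that the set $\uparrow(v_1):=\{v_2\in V_2:v_1<_D v_2\}$ is independent of $v_1\in V_1$: for incomparable $u,u'\in V_1$, if some $v\in V_2$ had $u<_D v$ and $v<_D u'$ then transitivity would give $u<_D u'$, a contradiction, so $\uparrow(u)=\uparrow(u')$; since $V_1$ is connected in the incomparability graph, the claim follows by induction on the length of an incomparability path in $V_1$.

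Writing $U_2:=\uparrow(v_1)$ and $W_1:=V_2\setminus U_2$ for any $v_1\in V_1$, one gets $W_1<_D V_1<_D U_2$ set-wise (and $W_1<_D U_2$ by transitivity whenever both are nonempty). A short case analysis then produces a partition of the full vertex set: take $(U,W)=(V_1,V_2)$ if $W_1=\emptyset$, $(U,W)=(V_2,V_1)$ if $U_2=\emptyset$, and $(U,W)=(W_1,\,V_1\cup U_2)$ if both are nonempty. In every case $u<_D w$ holds for all $u\in U$ and $w\in W$, so by Proposition~\ref{prop:spdesc} the only edge between such a pair is $u\to w$; in particular no directed edge, and hence no directed path, runs from $W$ to $U$, contradicting strong connectivity.

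For the converse, under $n_{ij}=2$ Definition~\ref{def:valid} forces at least one directed edge between every pair of vertices, and the single-edge configurations (cases 2 and 4) always carry positive weight. Assuming $D$ is not strongly connected, I would fix a vertex $u$ and let $S$ be the set of vertices reachable from $u$ in $D$, so that $S$ is a proper nonempty subset with no edges leaving it. Then every cross pair $(s,t)\in S\times(V\setminus S)$ can only be joined by the single edge $t\to s$, which is positive, so $t<_D s$ and the pair is comparable; no incomparability edge therefore crosses the cut $(S,V\setminus S)$, contradicting connectedness of the incomparability graph. The main obstacle is the forward direction, where the constancy of $\uparrow$ on an incomparability component is the step in which sparse geometry and $m$-acyclic transitivity interact most delicately, and one must choose the correct partition from the three regions $W_1<_D V_1<_D U_2$; the converse reduces to a short reachability argument once the single edges have been identified as positive.
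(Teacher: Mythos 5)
Your argument is correct, and its forward direction is genuinely different from the paper's. The paper argues directly: given $i<_D j$ and strong connectivity, it takes a shortest directed path from $j$ back to $i$ whose edges are either incomparability edges or order relations, and shows by a shortening argument that every edge of such a path must join an incomparable pair, which immediately yields a path in the incomparability graph. You instead argue by contrapositive: using Proposition~\ref{prop:spdesc} and transitivity of $<_D$ you show that a disconnected incomparability graph forces the layering $W_1<_D V_1<_D U_2$ (via the nice observation that the up-set $\uparrow(v_1)\cap V_2$ is constant along an incomparability component), and then you exhibit an explicit directed cut with no edges returning across it, so $D$ cannot be strongly connected. The paper's route is shorter and constructive (it produces the connecting path in the incomparability graph), while yours gives more structural information about \emph{how} strong connectivity fails, at the cost of the case analysis on $W_1,U_2$. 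For the converse both proofs rest on the same fact -- when $n_{i,j}=2$, incomparable pairs carry a pair of opposing (negative) edges -- but the paper uses it positively, walking between any two vertices along negative edges of the connected incomparability graph, whereas you run a reachability-cut contradiction; the two are equivalent, though in your version you should choose $u$ to be a vertex from which some vertex is \emph{not} reachable (such a vertex exists exactly because $D$ is assumed not strongly connected), since for an arbitrary $u$ the reachable set could be all of $\{1,2,\ldots,n\}$.
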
  
\begin{proof}
Assume first $D$ is strongly connected. We show for any pair of vertices
$\{i,j\}$ that there is a path between them in the incomparability graph
of $<_D$. There is nothing to prove if $i$ and $j$ are
incomparable. Without loss of generality we may assume that $i<_D j$
holds, by Proposition~\ref{prop:spdesc}  this implies the presence of a
single directed edge $i\rightarrow j$ in $D$. Since $D$ is strongly
connected, there is a path $j=i_0\rightarrow i_1\rightarrow \cdots
\rightarrow i_k=i$ from $j$ to $i$ in $D$ in which each edge
$i_s\rightarrow i_{s+1}$ is either between a pair of incomparable
vertices, or corresponds to the relation $i_s<_D i_{s+1}$. It suffices
to show that in a shortest such path all edges correspond to
incomparable pairs. Assume, by contradiction, that $i_s<_D
i_{s+1}$ holds for some $i_s\rightarrow i_{s+1}$, and consider first the
case when this edge is succeeded by an edge  
$i_{s+1}\rightarrow i_{s+2}$. The edge $i_{s+1}\rightarrow i_{s+2}$ must
correspond to an incomparable pair of vertices, otherwise $i_s<_D
i_{s+1}<_D i_{s+2}$ holds, and the segment  $i_s\rightarrow
i_{s+1}\rightarrow i_{s+2}$ in 
our path may be shortened to $i_s\rightarrow i_{s+2}$. A similar
shortening may also occur if the vertex $i_{s+2}$ forms an incomparable
pair with $i_s$, or if $i_s<_D i_{s+2}$ holds. Hence we must have
$i_{s+2}<_D i_s$ and $i_{s+2}<_D i_s <_D i_{s+1}$, in contradiction
with the presence of the edge $i_{s+1}\rightarrow i_{s+2}$.  A similar
contradiction may be reached if $i_s\rightarrow i_{s+1}$ is preceded by
another edge. Hence we may have only one edge $j\rightarrow i$
corresponding to $j<_D i$, in contradiction with $i<_D j$.

Assume now that $n_{ij}=2$ holds for all $i<j$ and that the
incomparability graph of $<_D$ is connected. In this case the edges of
negative weight in $D$ are obtained from the incomparability graph of
$<_D$ by replacing each undirected edge of the incomparability graph by
a pair of opposing directed edges. Since the incomparability graph is
connected, there is a directed path using edges of negative weight only
from any vertex to any vertex in $D$. 
\end{proof}  

\begin{example}
{\em Consider the Linial arrangement and the semiacyclic tournament $D$
containing a directed edge $i\leftarrow j$ of weight $-1$ for each
$i<j$. This is a valid $m$-acyclic weighted digraph, it is in fact acyclic. The
induced poset of gains is an antichain, the
incomparability graph is the complete graph, it is connected. However,
$D$ is not strongly connected. } 
\end{example}

We conclude this section with presenting a class of sparse deformations
of the braid arrangement for which the properties of the associated
valid $m$-acyclic weighted digraphs may be used to give an upper bound for the
number of regions. 
\begin{definition}
Let $\underline{a}=(a_1,a_2,\ldots,a_n)\in {\mathbb R}_{\geq 0}^n$ be a
vector of nonnegative real numbers. We
define the {$\underline{a}$-generalized Linial arrangement} as the set
of hyperplanes 
$$
x_i-x_j=a_i \quad \mbox{for $1\leq i<j\leq n$ in $V_{n-1}$.}
$$
\end{definition}  
Note that setting $a_1=a_2=\cdots=a_n=1$ yields the
Linial arrangement, whereas in the special case when the real numbers
$a_1,\ldots,a_n$ are algebraically independent, we obtain a {\em
  semigeneric arrangement} as defined in~\cite{Postnikov-Stanley}. In any valid associated weighted digraph $D$ there
is exactly one directed edge for each $i<j$:
\begin{enumerate}
\item either an edge $i\rightarrow j$, of weight $a_i$, corresponding to
  $x_i-x_j>a_i$: we call such an edge an {\em ascent};
\item or an edge $i\leftarrow j$ of weight $-a_i$: we call such an edge a {\em
  descent}.
  \end{enumerate}  
As a consequence, the weighted digraph $D$ may be uniquely reconstructed
from its underlying tournament. It depends on the values of the
parameters $a_i$ which tournaments contain no $m$-ascending cycle, but
there is a bijection between the regions of an
$\underline{a}$-generalized Linial arrangement and a set of tournaments
on $\{1,2,\ldots,n\}$. That said, certain tournaments may be excluded
regardless of the choice of the
parameters. Following~\cite{Hetyei-alta}, we call a cycle {\em
  alternating} if ascents and descents alternate in it. 
\begin{proposition}
If $D$ is a valid $m$-acyclic weighted digraph associated to an
$\underline{a}$-generalized Linial arrangement, then $D$ contains no
alternating cycle.  
\end{proposition}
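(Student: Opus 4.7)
The plan is to show that every alternating cycle in a valid weighted digraph associated to an $\underline{a}$-generalized Linial arrangement has total weight exactly zero, hence qualifies as an $m$-ascending cycle, contradicting $m$-acyclicity.

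The first step is to make explicit how the weight of an edge depends on the integer labels of its endpoints. By definition, an ascent is an edge $i\rightarrow j$ with $i<j$ and weight $a_i$, and a descent is an edge $j\rightarrow i$ with $i<j$ and weight $-a_i$. In both cases, the weight equals $\pm a_{\min(\cdot,\cdot)}$, with the sign determined by whether the edge climbs or drops in the integer order. Consequently, if ascents and descents alternate along a directed cycle, the integer labels of consecutive vertices must alternately increase and decrease, so the vertices cyclically split into local minima and local maxima.

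The main step is to evaluate the total weight of such an alternating cycle by bookkeeping around each local minimum. Each local minimum $w$ is the target of the incoming descent, contributing $-a_w$, and the source of the outgoing ascent, contributing $+a_w$; hence the two cycle edges incident to $w$ contribute $+a_w - a_w = 0$. Summing over all local minima gives total weight $0$. (Equivalently, writing the cycle as $v_1\rightarrow v_2\rightarrow\cdots\rightarrow v_{2k}\rightarrow v_1$ with odd-indexed vertices the local minima, the weight sum collapses telescopically to zero.) Since $0 \geq 0$, the alternating cycle is $m$-ascending, contradicting the hypothesis that $D$ is $m$-acyclic.

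There is no serious obstacle here; the one conceptual point to spot is that the weight of every edge is a function of only the smaller of its two endpoints, and this is what forces the exact cancellation. The argument is independent of the specific values of the $a_i$, which is precisely why alternating cycles can be excluded uniformly across all choices of the parameter vector $\underline{a}$.
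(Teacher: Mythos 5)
Your proof is correct and follows essentially the same route as the paper: the paper also observes that each edge's weight is $\pm a_{\min}$, groups the two edges incident to each valley (your local minimum) to get the cancellation $a_i-a_i=0$, and concludes the alternating cycle has total weight zero, hence is $m$-ascending, contradicting $m$-acyclicity.
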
  
\begin{proof}
Given a directed cycle in a tournament on $\{1,2,\ldots,n\}$, let us call a
vertex $i$ a {\em peak} if an descent follows a ascent at $i$ along the
cycle, and let us call $i$ a {\em valley} if ascent follows a descent at
$i$. Peaks and valleys alternate along an alternating cycle, hence we
may compute the total weight of its edges by counting the contribution
of each edge at the valley incident to it. The incoming edge at a valley
$i$ has weight $-a_i$, the outgoing edge has weight $a_i$. Hence the
total weight of all edges is zero, regardless of the values of the
parameters $a_i$. An alternating cycle is $m$-ascending.   
\end{proof}
A tournament on the set $\{1,2,\ldots,n\}$ is {\em alternation acyclic}
if it contains no alternating cycle. 
It has been shown in~\cite[Theorem~4.4]{Hetyei-alta} that the number of
alternation acyclic tournaments on the set $\{1,2,\ldots,n\}$ is the median
Genocchi number $H_{2n-1}$.
\begin{corollary}
\label{cor:alta}  
The number of regions in any $\underline{a}$-generalized Linial
arrangement in $V_{n-1}$ is less than or equal to the median
Genocchi number $H_{2n-1}$. 
\end{corollary}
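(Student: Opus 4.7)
The plan is to assemble the inequality directly from the three tools already on the table: the bijective encoding of regions from Corollary~\ref{cor:bijection}, the combinatorial structure of valid weighted digraphs in the $\underline{a}$-generalized Linial setting observed just above, and the enumerative result of \cite[Theorem~4.4]{Hetyei-alta}. There is no genuine obstacle; the work is to verify that the two notions of ``alternation acyclic'' match up cleanly, so that the cited enumeration applies.

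First, I would invoke Corollary~\ref{cor:bijection} to say that regions of the arrangement are in bijection with valid $m$-acyclic weighted digraphs on $\{1,2,\ldots,n\}$. In particular, the number of regions is bounded above by the number of valid weighted digraphs that happen to be $m$-acyclic. Next I would note the observation made in the enumeration of edge-types for this family: each valid weighted digraph $D$ contains exactly one directed edge between every pair $\{i,j\}$ with $i<j$, and its weight is determined entirely by whether that edge is an ascent $i\to j$ (weight $a_i$) or a descent $i\leftarrow j$ (weight $-a_i$). Thus $D\mapsto T(D)$, the underlying tournament on $\{1,2,\ldots,n\}$ recording only the orientation of each edge, is injective, so the number of valid $m$-acyclic weighted digraphs is bounded above by the number of tournaments $T(D)$ arising in this way.

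Then I would apply the preceding proposition, which says that any such $T(D)$ contains no alternating cycle, i.e.\ is alternation acyclic in the sense of \cite{Hetyei-alta}. Composing the two injections, the number of regions is at most the number of alternation acyclic tournaments on $\{1,2,\ldots,n\}$. Finally I would quote \cite[Theorem~4.4]{Hetyei-alta}, which evaluates this count as the median Genocchi number $H_{2n-1}$, finishing the proof.

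The only point that requires any care is the compatibility check: the ``alternation'' in the proposition is defined via ascent/descent of directed edges in $D$, whereas \cite{Hetyei-alta} phrases alternation acyclicity in terms of the edges of a tournament on $\{1,2,\ldots,n\}$ with its natural linear order. Since an edge of $D$ is an ascent precisely when it is oriented from the smaller to the larger vertex and a descent otherwise, the two notions coincide under $D\mapsto T(D)$. Once this identification is made, the inequality is immediate and is in fact tight only in the special case $a_1=a_2=\cdots=a_n$, which I would mention as a concluding remark but not elaborate further.
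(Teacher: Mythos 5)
Your proof is essentially the paper's own argument: Corollary~\ref{cor:bijection}, the injective passage from valid $m$-acyclic weighted digraphs to their underlying tournaments, the proposition excluding alternating cycles, and \cite[Theorem~4.4]{Hetyei-alta} combine exactly as you describe, and your compatibility check between the two notions of ascent/descent is indeed the only point needing care. One caveat on your closing aside (which is not part of the proof of the stated inequality): the bound is \emph{not} tight when $a_1=\cdots=a_n$ --- for the Linial arrangement with $n=3$ there are $7$ regions while $H_5=8$, since odd cycles are never alternating --- so that remark should be dropped; equality is rather to be expected for generic $\underline{a}$, in the spirit of the remark on the homogenized Linial arrangement.
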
   
\begin{remark}
{\em It has been shown in~\cite{Hetyei-alta} that there is a bijection
between alternation acyclic tournaments and the regions of the {\em
  homogenized Linial arrangement} whose hyperplanes are defined by
equations of the form $x_i-x_j=y_i$, where the $y_i$s are also
coordinate functions. Hence we may think of an
$\underline{a}$-generalized Linial arrangement as the intersection of
the homogenized Linial arrangement with the hyperplanes $y_i=a_i$ for 
$i=1,2,\ldots, n$. Corollary~\ref{cor:alta} may also be shown using
these observations.}  
\end{remark}

\begin{remark}
{\em The {\em
  bigraphical arrangements} introduced in~\cite{Hopkins-bigraphical} are
related to the sparse deformations discussed in this section.
Given a simple graph $G=(\{1,2\ldots,n\},E)$, for each edge $\{i,j\}$ we
choose real parameters $a_{ij}$  and $a_{ji}$, such
that there is an $x\in\mathbb{R}^n$ satisfying all inequalities of the form
$x_i-x_j < a_{ij}$ and $x_j - x_i < a_{ji}$. 
The bigraphical arrangement is the set of $2|E|$
hyperplanes $\{x_i - x_j = a_{ij} \::\: \{i, j\} \in E\}$.
Since $-a_{ji}<x_i-x_j<a_{ij}$ must have a solution,
$a_{ij}+a_{ji}>0$ must hold for each $\{i,j\}\in E$. 
The valid associated weighted digraphs may be described as follows. For
each edge $\{i,j\}\in E$, assuming $i<j$, exactly one of the following
possibilities hold: 
\begin{enumerate}
\item There a directed edge $i\leftarrow j$ of weight
  $a_{ji}$ (representing $x_i-x_j<-a_{ji}$). 
\item There is a directed edge $i\rightarrow j$ of weight $-a_{ji}$
  and a directed edge $i\leftarrow j$ of weight $-a_{ij}$
(representing $-a_{ji}<x_i-x_j<a_{ij}$). 
\item There a directed edge $i\rightarrow j$ of weight
  $a_{ij}$ (representing $x_i-x_j>a_{ij}$).  
\end{enumerate}
It is part of the definition of a bigraphical arrangement that selecting
option (2) on each $\{i,j\}\in E$  should yield an $m$-acyclic
digraph (labeling the {\em central region}).
The definition of a {\em partial orientation}
in~\cite{Hopkins-bigraphical} is equivalent to the above weighted
digraph labeling. The oriented edges in~\cite{Hopkins-bigraphical} are
directed in the opposite way, the unoriented edges correspond to our
pairs of opposing directed edges. The definition of $A$-admissibility is
equivalent to our $m$-acyclic condition (the {\em score} used
in~\cite{Hopkins-bigraphical}  is the negative of our weight) and
{\em potential cycles} are directed cycles using some directed edges
whose opposite is also present. The same observations also apply to the
{\em mixed graphs} appearing in~\cite{Beck-mixed}.}    
\end{remark}

\section{Separated deformations and the weak triangle inequality}
\label{sec:separated}

\begin{definition}
Let $\mathcal{A}$ be a deformation of the braid arrangement, given
by~\eqref{eq:dbraid}. We call the arrangement $\mathcal{A}$ 
{\em  separated} if $0$ belongs to the set $A_{ij}=\{a_{ij}^{(1)}, a_{ij}^{(2)},
\ldots, a_{ij}^{(n_{ij})}\}$ for each $1\leq i<j\leq n$.    
\end{definition}
\begin{proposition}
In any valid associated weighted digraph of a separated
deformation $\mathcal{A}$ of the braid arrangement given 
by~\eqref{eq:dbraid} either $w(i,j)\geq 0$ or $w(j,i)\geq 0$ holds
for each $1\leq i<j\leq n$.  
\end{proposition}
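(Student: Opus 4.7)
The plan is to fix an arbitrary pair $1 \leq i < j \leq n$ and run through the four cases of Definition~\ref{def:valid}, using the separation hypothesis to rule out some cases and handle the rest by a monotonicity argument. Since $\mathcal{A}$ is separated, $0$ lies in the strictly increasing list $a_{ij}^{(1)} < a_{ij}^{(2)} < \cdots < a_{ij}^{(n_{ij})}$, so there is a unique index $k_0$ with $a_{ij}^{(k_0)} = 0$, and in particular $n_{ij} \geq 1$. Consequently case~(1) of Definition~\ref{def:valid} cannot occur at the pair $\{i,j\}$, and it remains to analyze cases (2), (3), and (4).

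In case~(2), only the edge $i \leftarrow j$ is present, with weight $w(j,i) = -a_{ij}^{(1)}$. Since $a_{ij}^{(1)} \leq a_{ij}^{(k_0)} = 0$, we get $w(j,i) \geq 0$. In case~(4), only the edge $i \rightarrow j$ is present, with weight $w(i,j) = a_{ij}^{(n_{ij})} \geq a_{ij}^{(k_0)} = 0$. Both of these are immediate and require no further argument.

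The main content of the proof is case~(3), where both directed edges are present with weights $w(i,j) = a_{ij}^{(k)}$ and $w(j,i) = -a_{ij}^{(k+1)}$ for some $k < n_{ij}$. Here I would split on the position of $k$ relative to $k_0$: if $k \geq k_0$ then $w(i,j) = a_{ij}^{(k)} \geq a_{ij}^{(k_0)} = 0$, while if $k < k_0$ then $k+1 \leq k_0$, so $a_{ij}^{(k+1)} \leq 0$ and therefore $w(j,i) = -a_{ij}^{(k+1)} \geq 0$. In either sub-case one of the two weights is nonnegative, completing the proof.

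There is no real obstacle in this argument; it is a two-line pigeonhole based on the fact that $0$ sits somewhere in a strictly increasing list, and the weights on the two opposing edges are drawn from consecutive entries of this list with opposite signs. The only thing to be careful about is to keep the sign conventions straight: the weight assigned to $i \leftarrow j$ is $-a_{ij}^{(k+1)}$ (not $a_{ij}^{(k+1)}$), as dictated by Definition~\ref{def:negcost} and Definition~\ref{def:valid}.
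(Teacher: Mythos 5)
Your proof is correct and follows essentially the same route as the paper: the paper also argues directly from Definition~\ref{def:valid}, noting that separatedness forces $a_{ij}^{(1)}\leq 0$ and $a_{ij}^{(n_{ij})}\geq 0$ and that two consecutive values $a_{ij}^{(k)},a_{ij}^{(k+1)}$ cannot be nonzero with opposite signs, which is exactly your $k_0$-pigeonhole in case~(3). Your write-up just spells out the case analysis more explicitly; nothing is missing.
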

Indeed, by our assumptions $a_{ij}^{(1)}\leq 0$ (and hence $-a_{ij}^{(1)}\geq
0$) and $a_{ij}^{(n_{ij})}\geq 0$ hold, furthermore, for each
$k<n_{ij}$,  the numbers  $a_{ij}^{(k)}$ and $a_{ij}^{(k+1)}$ can not be
both nonzero real numbers, having opposite signs. The statement is a
direct consequence of Definition~\ref{def:valid}.  
\begin{corollary}
For a separated deformation of the braid arrangement, the induced poset of gains
associated to any valid $m$-acyclic weighted digraph is a totally ordered set. 
\end{corollary}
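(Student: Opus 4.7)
The plan is to deduce the result almost immediately from the preceding proposition, which guarantees that for every unordered pair $\{i,j\}$ at least one of $w(i,j)\geq 0$ or $w(j,i)\geq 0$ holds. Since a single directed edge of nonnegative weight is itself a directed path of the type required by Definition~\ref{def:og}, comparability of every pair follows at once.

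More precisely, I would fix distinct vertices $i<j$ and apply the preceding proposition. If $w(i,j)\geq 0$, the one-edge path $i\to j$ certifies $i<_D j$; if $w(j,i)\geq 0$, the one-edge path $j\to i$ certifies $j<_D i$. In either case the two vertices are comparable in $<_D$, so the induced poset of gains is a chain.

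What remains is to be sure that $<_D$ is indeed a (strict) partial order, so that ``totally ordered'' means what it should. Reflexivity and transitivity are built into Definition~\ref{def:og}, and antisymmetry is the place where $m$-acyclicity is used: if $i<_D j$ were witnessed by a nonnegative-weight path $P_1$ and $j<_D i$ by a nonnegative-weight path $P_2$, then concatenating $P_1$ and $P_2$ would yield a closed walk of nonnegative total weight. Regarded as a nonzero circulation, this walk has cost $\leq 0$ in the sense of Definition~\ref{def:negcost}, so Corollary~\ref{cor:umin} (via Theorem~\ref{thm:circdec}) forces at least one directed cycle in its support to have cost $\leq 0$, i.e., to be $m$-ascending, contradicting the hypothesis that $D$ is $m$-acyclic.

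I do not anticipate any genuine obstacle here: the essential work was already done in the preceding proposition, and the only care needed is to keep the sign conventions straight when flipping between ``weight'' (gain) and ``cost'' while applying Theorem~\ref{thm:circdec}.
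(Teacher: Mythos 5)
Your proposal is correct and follows exactly the route the paper intends: the corollary is an immediate consequence of the preceding proposition (one of $w(i,j)\geq 0$, $w(j,i)\geq 0$ always holds, so every pair is comparable via a single nonnegative-weight edge), with the partial-order property of $<_D$ already guaranteed by the $m$-acyclic condition as noted after Definition~\ref{def:og}. Your explicit check of antisymmetry via Theorem~\ref{thm:circdec} and Corollary~\ref{cor:umin} just spells out what the paper leaves implicit, so there is no gap.
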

Hence there is a unique permutation $\sigma=\sigma(1)\sigma(2)\cdots
\sigma(n)$ of the coordinates such that $w(\sigma(i),\sigma(j))\geq 0$
holds for all $i>j$. 
\begin{definition}
We call the permutation $\sigma$ the {\em order of gains} associated to
the valid $m$-acyclic weighted digraph of a separated deformation of the
braid arrangement.  We will also use the notation $i<_{\sigma^{-1}} j$ to
indicate that the label $i$ precedes the label $j$ in $\sigma$. 
\end{definition}
So far we only rephrased the observation that for a separated
deformation of the braid arrangement, each region is included in a
region $x_{\sigma(1)}>x_{\sigma(2)}>\cdots >x_{\sigma(n)}$
of the braid arrangement. Using Theorem~\ref{thm:br} we may refine this
observation as follows.
\begin{theorem}
\label{thm:sbounded}  
Let ${\mathcal R}$ be a region of a separated deformation of the braid
arrangement and let $x_{\sigma(1)}>x_{\sigma(2)}>\cdots >x_{\sigma(n)}$
be the unique region of the braid arrangement containing it. Then there
is a unique sequence $1=i_0\leq i_1<i_2<\cdots <i_k=n$ such that 
writing $\sigma$ as the concatenation 
$$
\sigma=(\sigma(i_0)\cdots\sigma(i_1))\cdot
(\sigma(i_1+1)\cdots\sigma(i_2))
\cdots
(\sigma(i_{k-1}+1)\cdots\sigma(i_k))
$$
of contiguous subwords has the following properties: 
\begin{enumerate}
\item For each $j=-1,0,\ldots,k-1$ the intersection of ${\mathcal R}$
  with the linear span of $\{e_{\sigma(i_j+1)},e_{\sigma(i_j+2)},\ldots,
  e_{\sigma(i_{j+1})}\}$ is a bounded region.
\item If a subset $S$ of $\{1,2,\ldots,n\}$ contains indices $j_1$ and
  $j_2$ such that $\sigma(j_1)$ and $\sigma(j_2)$ belong to different
  subwords in the above   decomposition then the intersection of ${\mathcal R}$
  with the linear span of $\{e_{\sigma(j)}\::\: j\in S\}$ is unbounded.    
\end{enumerate}  
\end{theorem}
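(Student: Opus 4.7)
The plan is to read off the decomposition directly from the strongly connected component structure of the valid $m$-acyclic weighted digraph $D$ associated to $\mathcal{R}$, relying on Theorem~\ref{thm:br} to convert the combinatorial statement about strong connectivity into the geometric statement about boundedness.

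First I would show that the strong components of $D$ correspond to contiguous intervals of positions of $\sigma$. Suppose $p<q$ are positions with $\sigma(p)$ and $\sigma(q)$ in the same strong component, and let $p<r<q$. Because $\mathcal{A}$ is separated, the order-of-gains relation is total, so $w(\sigma(q),\sigma(r))\ge 0$ and $w(\sigma(r),\sigma(p))\ge 0$; in particular both edges $\sigma(q)\to\sigma(r)$ and $\sigma(r)\to\sigma(p)$ are present in $D$. Combining these with any directed path from $\sigma(p)$ to $\sigma(q)$ inside the strong component yields directed paths $\sigma(p)\rightsquigarrow\sigma(q)\to\sigma(r)$ and $\sigma(r)\to\sigma(p)$, so $\sigma(r)$ lies in the same component. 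Thus the strong components are exactly the contiguous blocks cut out by a unique sequence $1=i_0\le i_1<\cdots<i_k=n$, where the blocks are listed in the order in which their positions appear along $\sigma$.

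Next I would verify properties (1) and (2). For a block $N=\{\sigma(i_j+1),\ldots,\sigma(i_{j+1})\}$, the intersection of $\mathcal{R}$ with $\operatorname{span}\{e_v:v\in N\}$ is described (after also imposing $\sum_v x_v=0$) by those inequalities of~\eqref{eq:dp} involving only coordinates of $N$, together with single-variable bounds inherited from inequalities that involve exactly one coordinate of $N$ (the other coordinate being set to $0$). The pairwise inequalities among coordinates of $N$ have $D[N]$ as their associated weighted digraph; since $D[N]$ inherits $m$-acyclicity from $D$ and is strongly connected by construction of the blocks, Theorem~\ref{thm:br} makes the resulting polytope bounded. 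For property (2), if $S$ meets two distinct blocks, then $D[\{\sigma(j):j\in S\}]$ contains vertices lying in different strong components of $D$ and therefore fails to be strongly connected; Theorem~\ref{thm:br} then supplies an explicit unbounded direction, constructed by the same bipartition-of-vertices argument used in its proof.

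The main obstacle is the precise translation between the geometric slice \emph{``$\mathcal{R}\cap\operatorname{span}\{e_v:v\in N\}$''} and the combinatorial polytope associated to $D[N]$: one must check that the extra single-variable bounds coming from edges between $N$ and its complement are jointly satisfiable (so that in case~(1) the slice is actually nonempty) and do not destroy either the $m$-acyclicity or the unboundedness argument in case~(2). Since the cross-block edges of $D$ all point consistently with the topological order of its condensation, which by the first step agrees with $\sigma$, these extra bounds merely pin the slice inside a bounded window of translates, and the boundedness/unboundedness conclusions of Theorem~\ref{thm:br} applied to $D[N]$ (respectively to $D[\{\sigma(j):j\in S\}]$) transfer verbatim.
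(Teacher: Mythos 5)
Your decomposition is the one the paper uses: the strong components of $D_{\mathcal R}$ (contiguous along $\sigma$ by separatedness) are exactly the blocks the paper constructs greedily, taking $i_1$ to be the largest index with a directed path from $\sigma(i_1)$ to $\sigma(1)$ and recursing, and both arguments then invoke Theorem~\ref{thm:br} for property (1) and the no-returning-edge bipartition from its proof for property (2). (Minor point: with the orientation actually used in this proof the nonnegative edges run forward, giving the spine $\sigma(1)\rightarrow\cdots\rightarrow\sigma(i_1)$, so the arrows in your contiguity argument should be reversed; the argument survives unchanged.) The genuine problem is the step you yourself call ``the main obstacle'' and then settle by assertion. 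If the slice is read literally --- impose $x_u=0$ for every $u$ outside the chosen set, so that the inequalities of \eqref{eq:dp} joining the set to its complement become single-variable bounds --- then your closing claim that these extra bounds ``merely pin the slice inside a bounded window of translates'' and that the conclusions of Theorem~\ref{thm:br} ``transfer verbatim'' is false: the unbounded direction produced in the proof of Theorem~\ref{thm:br} is not a translation and can be blocked by those bounds. Concretely, take $n=3$, the separated deformation with hyperplanes $x_1-x_2=0,10,11$, $x_1-x_3=0$, $x_2-x_3=0$, and the region $10<x_1-x_2<11$, $x_1-x_3>0$, $x_2-x_3>0$. Here $\sigma=123$ and the blocks are $\{1,2\}$ and $\{3\}$, yet the literal slice by $\operatorname{span}\{e_1,e_3\}$ is $\{(x_1,0,-x_1)\,:\,10<x_1<11\}$, nonempty and bounded (the pinned $x_2=0$ bounds $x_1$), while the literal slice by $\operatorname{span}\{e_1,e_2\}$ is empty. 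So with the cross-bounds kept, neither your unboundedness argument in case (2) nor the nonemptiness you promise in case (1) can be rescued.

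The way out --- and what the paper's proof actually uses, since it only ever reasons about bounds on differences $x_{\sigma(j_1)}-x_{\sigma(j_2)}$ implied by the defining system --- is to drop the cross inequalities altogether: interpret ``the intersection with the linear span of $\{e_{\sigma(j)}\,:\,j\in S\}$'' as the weighted digraphical polytope in $V_{|S|-1}$ cut out by the inequalities of \eqref{eq:dp} involving only the chosen coordinates, i.e.\ the polytope of the restricted digraph. For that object your argument closes at once: the restriction to a block is $m$-acyclic and strongly connected, hence nonempty and bounded by Theorems~\ref{thm:mincost} and~\ref{thm:br}, while the restriction to a set meeting two blocks is $m$-acyclic but not strongly connected, hence nonempty and unbounded. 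Finally, you never address the uniqueness asserted in the statement: the paper does, by testing both a larger and a smaller cut $i_1'$. In your setup it suffices to remark that two distinct interval decompositions both satisfying (1) and (2) would yield a set $S$ whose polytope is simultaneously bounded (by (1) for one decomposition) and unbounded (by (2) for the other); add that sentence and the uniqueness is done.
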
  
\begin{proof}
By Theorem~\ref{thm:br}, for any nonempty subset $S$ of
$\{1,2,\ldots,n\}$ the intersection of ${\mathcal R}$ with the linear
span of $\{e_{\sigma(j)}\::\: j\in S\}$ is bounded if the
restriction of the weighted digraph $D_{\mathcal{R}}$ associated to
${\mathcal R}$ to the set $\{\sigma(j)\::\: j\in S\}$ is strongly
connected. Note that the converse is not necessarily true: inequalities implied
by edges not belonging to the restriction may force the intersection to
be bounded even if the restriction is not strongly connected. 

First we show the existence of such a decomposition. 
Let $i_1$ be the largest index such that there is a directed
path from $\sigma(i_1)$ to 
$\sigma(1)$ in $D_{\mathcal{R}}$. In particular we set $i_1=1$ if $\sigma(1)$
has indegree zero in $D_{\mathcal{R}}$. Keep in mind that the
definition of $V_{n-1}$ includes the equality $x_1+\cdots+x_n=0$, 
together with $x_{\sigma(2)}=\cdots=x_{\sigma(n)}=0$ this forces
$x_{\sigma(1)}=0$. Since there is a 
directed path $\sigma(1)\rightarrow \sigma(2)\rightarrow
\cdots \rightarrow \sigma(i_1)$ in $D_{\mathcal{R}}$, the restriction of
$D_{\mathcal{R}}$ to the set
$[\sigma(1),\sigma(i_1)]=\{\sigma(1),\sigma(2),\ldots,\sigma(i_1)\}$ is
strongly connected. Furthermore, given any $j_1\in [\sigma(1),\sigma(i_1)]$ and
any  $j_2\not\in [\sigma(1),\sigma(i_1)]$, no directed edge
$j_2\rightarrow j_1$ can exist as $\sigma(1)$ is not reachable from
$j_2$. Hence any subset of $[\sigma(1),\sigma(i_1)]$ and any subset of
its complement must belong to different strong components: once we leave
$[\sigma(1),\sigma(i_1)]$, there is no arrow we could use to return to
it. As seen in the proof of Theorem~\ref{thm:br}, this implies that all
(implied) differences of the form $x_{\sigma(j_1)}-x_{\sigma(j_2)}$ are only
bounded from below in the definition of ${\mathcal R}$ which remains
unbounded even after intersecting it with the linear span of
$\{e_{\sigma(j)}\::\: j\in S\}$ for any $S$ containing both $j_1$ and
$j_2$. We may continue finding a suitable $i_2$, $i_3$, \ldots $i_k$ in
a recursive fashion: for $j=1,2,\ldots,k-1$, we may define $i_{j+1}$ as
the largest index such that $\sigma(i_j)$ may be reached from
$\sigma(i_{j+1})$.   

To show the uniqueness of our decomposition, observe first that replacing $i_1$
with any larger $i_1'$ results in a subset
$[\sigma(1),\sigma(i_1')]$ such that the restriction of
$D_{\mathcal{R}}$ to $[\sigma(1),\sigma(i_1')]$ is not strongly
connected: for any $j_1\leq i_1$ and any $j_2>i_1$ there is only a directed edge
from $\sigma(j_1)$ to $\sigma(j_2)$. As above, using the proof of
Theorem~\ref{thm:br}, we may conclude that the
differences of the form $x_{\sigma(j_1)}-x_{\sigma(j_2)}$ are only
bounded from below in the definition of ${\mathcal R}$ and the
intersection of $\mathcal{R}$ with the linear span of
$\{e_{\sigma(j)}\::\: 1\leq j\leq i_1'\}$ is still unbounded. On the
other hand, for any $i_1'<i_1$ the set $[\sigma(1),\sigma(i_1)]$ properly
contains $[\sigma(1),\sigma(i_1')]$ and the restriction of 
$D_{\mathcal{R}}$ to $[\sigma(1),\sigma(i_1)]$ is strongly
connected. 
\end{proof}
It should be stressed that Theorem~\ref{thm:sbounded} above helps
identify some bounded regions in a separated deformation of the braid
arrangement, but does not describe all regions completely. For the Shi
arrangement the associated poset of gains identifies the {\em Weyl
  chamber}  containing a given region, and there is an extensive
literature on counting the regions just inside each Weyl chamber in a
Shi arrangement~\cite{Armstrong-Reiner-Rhoades,Dermenjian-Tzanaki,Dorpalen-Stump}.

Next we define the {\em gain
  function $g:\{1,2,\ldots,n\}\rightarrow {\mathbb R}$} associated to a
valid $m$-acyclic weighted digraph as follows.
\begin{definition}
For each $i\in \{1,2,\ldots,n\}$ we define the {\em gain function}
$g(\sigma(i))$ as the maximum weight of a directed path beginning at
$\sigma(1)$ and ending at $\sigma(i)$. In particular, we set
$g(\sigma(1))=0$. Here $\sigma$ is the total order of gains.    
\end{definition}  
\begin{remark}
{\em The word ``gain'' here is used as the opposite of ``cost'' in a weighted
directed graph setting and it is not related to the notion of a gain
graph~\cite{Berthome-etc,Zaslavsky-perpendicular}.} 
\end{remark}  

\begin{lemma}
  Every gain function has the weakly increasing property
  $$
g(\sigma(1))\leq g(\sigma(2))\leq\cdots \leq g(\sigma(n)).
  $$
\end{lemma}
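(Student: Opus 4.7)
The plan is to reduce the chain of inequalities to the single-step inequality $g(\sigma(i))\le g(\sigma(i+1))$ for each $i\in\{1,2,\ldots,n-1\}$, and to derive this from the combination of (i) the definition of the order of gains and (ii) the fact that in an $m$-acyclic digraph every closed walk has strictly negative total weight.

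First I would unpack what the order of gains gives us. By Definition~\ref{def:og} and the discussion preceding the definition of $\sigma$, consecutive elements $\sigma(i)$ and $\sigma(i+1)$ are comparable in $<_D$, and by construction $\sigma(i)<_D\sigma(i+1)$. Hence there exists a directed path $\sigma(i)=j_0\rightarrow j_1\rightarrow\cdots\rightarrow j_r=\sigma(i+1)$ in $D$ such that $w(j_s,j_{s+1})\ge 0$ for every $s$; its total weight is therefore nonnegative.

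Next I would concatenate. Let $\sigma(1)=p_0\rightarrow p_1\rightarrow\cdots\rightarrow p_t=\sigma(i)$ be a directed path achieving the maximum weight $g(\sigma(i))$. Appending the path supplied by the previous step produces a \emph{walk} from $\sigma(1)$ to $\sigma(i+1)$ whose total weight is at least $g(\sigma(i))+0=g(\sigma(i))$.

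The key step, and the only place where something nontrivial happens, is turning this walk into a directed path of at least the same weight. If the walk repeats a vertex, decompose its closed segments via the Flow Decomposition Theorem~\ref{thm:circdec} into a disjoint union of directed cycles; by the $m$-acyclic hypothesis each such cycle has strictly negative weight (cf.\ the rephrasing in \eqref{eq:m-acyclic}). Removing these cycles from the walk leaves a directed path from $\sigma(1)$ to $\sigma(i+1)$ whose weight is equal to the walk's weight \emph{minus} a sum of negative numbers, hence is at least $g(\sigma(i))$. Since $g(\sigma(i+1))$ is the supremum of path weights from $\sigma(1)$ to $\sigma(i+1)$, we conclude $g(\sigma(i+1))\ge g(\sigma(i))$, and iterating over $i$ yields the chain. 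The main obstacle is purely bookkeeping in the walk-to-path reduction; the rest is a direct consequence of the two pieces of structure already established.
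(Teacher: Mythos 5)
Your argument is correct and follows essentially the same route as the paper: extend a maximum-weight path from $\sigma(1)$ by a nonnegative-weight continuation and then delete repeated-vertex closed segments, which have strictly negative weight by $m$-acyclicity, so the resulting path certifies $g(\sigma(i+1))\geq g(\sigma(i))$. The paper does this in one step by appending the single nonnegative-weight edge $\sigma(i)\rightarrow\sigma(j)$ (rather than a path between consecutive elements), which also yields the slightly stronger bound \eqref{eq:gbound} used later, but the mechanism is the same.
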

Indeed, for each $i<j$, every maximum weight directed path from
$\sigma(1)$ to $\sigma(i)$ may be extended to a directed walk to
$\sigma(j)$ by adding the directed edge $\sigma(i)\rightarrow \sigma(j)$
of nonnegative weight. Eliminating cycles from the walk can only
increase the total weight, hence we have
\begin{equation}
\label{eq:gbound}  
g(\sigma(j))\geq g(\sigma(i))+w(\sigma(i),\sigma(j))\geq g(\sigma(i)).  
\end{equation}
The question naturally arises: could the maximum gain be achieved by
using directed edges of nonnegative weight only? In the rest of this
section we show that the answer is yes at least in an important special
case.
\begin{definition}
We call a deformation $\mathcal{A}$ of the braid arrangement {\em
  integral} if all the numbers $a_{ij}^{(k)}$ appearing in
\eqref{eq:dbraid} are integers. We say that $\mathcal{A}$ satisfies the
      {\em weak triangle inequality} if for all triplets $(i,j,k)$, the
      inequalities $w(i,j)\geq 0$ and $w(j,k)\geq 0$ imply
      $$
w(i,k)\leq w(i,j)+w(j,k)+1
$$
in any valid $m$-acyclic associated weighted digraph.
\end{definition}
\begin{theorem}
\label{thm:posweight}  
Let $\mathcal{A}$ be a separated integral deformation of the braid
arrangement satisfying the weak triangle inequality, and let $D$ be an
associated $m$-acyclic weighted digraph. Let $\sigma$ be the total order
of gains associated to $D$ and let $g$ be the gain function. Then, for
each $i>1$ there is a directed path from $\sigma(1)$ to $\sigma(i)$ such
that all weights in the path are nonnegative and the total weight of the
edges in the path is $g(\sigma(i))-g(\sigma(1))$.  
\end{theorem}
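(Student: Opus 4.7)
The plan is to proceed by strong induction on the position $i$ in the order of gains $\sigma$. The base case $i=1$ is trivial, since the empty path at $\sigma(1)$ has weight $g(\sigma(1))=0$. Before starting, I would record one structural observation: in a separated $m$-acyclic digraph the nonnegative-weight edges are exactly the forward-in-$\sigma$ edges $\sigma(a)\to\sigma(b)$ with $a<b$, because a backward edge $\sigma(a)\to\sigma(b)$ (with $a>b$) of nonnegative weight would combine with the forward chain $\sigma(b)\to\sigma(b+1)\to\cdots\to\sigma(a)$ into an $m$-ascending cycle, contradicting $m$-acyclicity. So the goal is to produce a max-weight path using only forward-in-$\sigma$ edges.

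For the inductive step I would fix $i>1$, assume the statement for all $j<i$, and take a max-weight directed path $P=u_0,u_1,\dots,u_k$ from $\sigma(1)$ to $\sigma(i)=u_k$ in $D$. By $m$-acyclicity this path is simple, and by the standard optimal-substructure argument every prefix ending at $u_s$ has weight exactly $g(u_s)$. If the last edge $u_{k-1}\to u_k$ is forward, say $u_{k-1}=\sigma(j)$ with $j<i$, the inductive hypothesis supplies a forward path $Q$ from $\sigma(1)$ to $\sigma(j)$ of weight $g(\sigma(j))$, and appending the nonnegative forward edge $\sigma(j)\to\sigma(i)$ gives the required forward path of weight $g(\sigma(i))$.

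The substantive case is when $P$ ends with a backward edge. Let $s^{\ast}$ be the largest index $s<k$ with $\sigma^{-1}(u_s)<i$; since $s=0$ qualifies and $P$ is simple, $s^{\ast}$ exists and $u_{s^{\ast}}=\sigma(j)$ for some $j<i$. By maximality of $s^{\ast}$ the next vertex $u_{s^{\ast}+1}=\sigma(b)$ has $b>i$, the edge $\sigma(j)\to\sigma(b)$ is forward, and the remainder $R$ of $P$ runs from $\sigma(b)$ back to $\sigma(i)$ through vertices of $\sigma$-index strictly larger than $i$. Two ingredients will then pin down the weight of this excursion. First, the weak triangle inequality applied to the triple $(\sigma(j),\sigma(i),\sigma(b))$, using that $w(\sigma(j),\sigma(i))$ and $w(\sigma(i),\sigma(b))$ are both forward and hence nonnegative, yields
$$
w(\sigma(j),\sigma(b))\;\le\;w(\sigma(j),\sigma(i))+w(\sigma(i),\sigma(b))+1.
$$
Second, prepending the forward edge $\sigma(i)\to\sigma(b)$ to $R$ closes a simple directed cycle in $D$; $m$-acyclicity forces its weight to be strictly negative, and integrality of $\mathcal{A}$ upgrades this to
$$
\mathrm{weight}(R)\;\le\;-w(\sigma(i),\sigma(b))-1.
$$
Adding the two bounds, the excursion $\sigma(j)\to\sigma(b)\to\cdots\to\sigma(i)$ has total weight at most $w(\sigma(j),\sigma(i))$, so replacing it by the single forward edge $\sigma(j)\to\sigma(i)$ does not decrease the overall weight of the path. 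Combining with a forward path from $\sigma(1)$ to $\sigma(j)$ of weight $g(\sigma(j))$, supplied by induction, yields a forward path from $\sigma(1)$ to $\sigma(i)$ of weight at least $g(\sigma(i))$; maximality of $g(\sigma(i))$ forces equality, completing the induction.

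The main obstacle is precisely this last configuration, where the max-weight path ends with a backward edge. Neither the weak triangle inequality nor $m$-acyclicity alone bounds the excursion tightly enough: the former loses an additive $1$, the latter (after integrality) gains an additive $1$, and only their interaction makes the direct forward edge match the detour. Integrality of $\mathcal{A}$ is essential in converting the strict cycle inequality into the decisive $-1$ on the right-hand side.
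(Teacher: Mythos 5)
Your proof is correct and rests on exactly the mechanism the paper uses: a path-surgery step in which the weak triangle inequality (which loses an additive $1$) is played against a directed cycle whose weight is negative and, by integrality, at most $-1$, so that a detour through vertices lying later in the order of gains can be replaced by a single nonnegative forward edge without decreasing the weight. The only difference is bookkeeping: you induct on the position of $\sigma(i)$ and excise the entire terminal excursion in one step, applying the weak triangle inequality to the triple $(\sigma(j),\sigma(i),\sigma(b))$ and closing the cycle with the forward edge $\sigma(i)\rightarrow\sigma(b)$, whereas the paper picks a maximum-weight path with the fewest negative edges and performs the same surgery around the first negative edge (triple $(i_j,i_{k+1},i_{j+1})$), reaching a contradiction with minimality.
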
  
\begin{proof}
Consider a weighted path $\sigma(1)=i_0\rightarrow i_1\rightarrow
\cdots\rightarrow i_m=\sigma(i)$ from $\sigma(1)$ to $\sigma(i)$ such
that the weight of the path is $g(\sigma(i))-g(\sigma(1))$ and it has the least possible
number of negative weighted edges. We are done if there is no edge of
negative weight, otherwise assume that $i_{k}\rightarrow i_{k+1}$ is the
first edge satisfying $w(i_k,i_{k+1})<0$. All preceding steps being of
nonnegative weight, we must have $i_0<_{\sigma^{-1}} i_1 <_{\sigma^{-1}}
\cdots <_{\sigma^{-1}} i_k$. The inequality $w(i_k,i_{k+1})<0$ implies
$i_{k+1}<_{\sigma^{-1}} i_k$, hence there is a unique $j\leq k-1$ such
that $i_j <_{\sigma^{-1}} i_{k+1} <_{\sigma^{-1}} i_{j+1}$
holds. (Recall that, the $m$-acyclic property implies that walks
revisiting a vertex can only have lower weight than the path obtained by
eliminating the closed subwalks, hence the inequalities must be strict.)
Applying the weak triangle inequality to he triple $(i_j,i_{k+1},i_{j+1})$ we
obtain
$$
w(i_j,i_{j+1})\leq w(i_j,i_{k+1})+w(i_{k+1},i_{j+1})+1.
$$
Let us add the weight $\walkw(i_{j+1},i_{j+2},\ldots,i_k,i_{k+1})$ of the walk
$i_{j+1}\rightarrow i_{j+2}\rightarrow \cdots\rightarrow i_k\rightarrow
i_{k+1}$ to both sides. On the left hand side we obtain the weight of the walk 
$i_{j}\rightarrow i_{j+1}\rightarrow \cdots\rightarrow i_k\rightarrow
i_{k+1}$:
$$
\walkw(i_j,i_{j+1},\ldots i_k,i_{k+1})\leq
w(i_j,i_{k+1})+\walkw(i_{j+1},i_{j+2},\ldots,i_k,i_{k+1})+w(i_{k+1},i_{j+1})+1. 
$$
The sum $\walkw(i_{j+1},i_{j+2},\ldots,i_k,i_{k+1})+w(i_{k+1},i_{j+1})$ on
the right hand side is the weight of the closed walk
$i_{j+1}\rightarrow i_{j+1} \rightarrow i_{j+1} \rightarrow i_{j+2}
\rightarrow \cdots \rightarrow i_k \rightarrow i_{k+1} \rightarrow
i_{j+1}$, which is negative by the $m$-acyclic property, and it is at
most $-1$ by the integrality of $\mathcal{A}$. Thus we obtain 
$$
\walkw(i_j,i_{j+1},\ldots i_k,i_{k+1})\leq
w(i_j,i_{k+1}),
$$
which means that we may replace the subpath $i_{j}\rightarrow
i_{j+1}\rightarrow \cdots\rightarrow i_k\rightarrow i_{k+1}$ with the
nonnegative edge $i_j\rightarrow i_{k+1}$, thus decreasing the number of
negative edges without decreasing the total weight, in contradiction
with our assumptions. 
\end{proof}   
An example of a saturated integral arrangement in which
$g(\sigma(i))-g(\sigma(1))$ is attained only by a path containing a
negative weighted edge is given in Example~\ref{ex:negative}. 
{\em From now on until the rest of the section we only consider separated
integral deformations of the braid arrangement satisfying the weak
triangle inequality.}  
If we know the total order of gains, Theorem~\ref{thm:posweight} allows
us to compute the gain function in a greedy fashion:
\begin{enumerate}
\item We set $g(\sigma(1))=0$.
\item Once we computed $g(\sigma(1))), g(\sigma(2)),\ldots,
  g(\sigma(i-1))$, the value of $g(\sigma(i))$ is 
\begin{equation}
\label{eq:grec}    
g(\sigma(i))=\max_{1\leq j\leq i-1} (g(\sigma(j))+w(\sigma(j),\sigma(i))).
\end{equation}
\end{enumerate}  
The choice of $j$ in equation~(\ref{eq:grec}) may not be unique. We
eliminate the ambiguity by always selecting the rightmost possible label
$\sigma(j)$: 
\begin{definition}
\label{def:treedef}
We select the largest $j<i$ satisfying~(\ref{eq:grec}) and call the
resulting $\sigma(j)$ {\em the parent of $\sigma(i)$}, denoted by
$p(\sigma(i))$. We extend the definition to $\sigma(1)$ by setting
$p(\sigma(1))=\sigma(1)$. 
\end{definition}   
Note that the pairs $(\sigma(i),p(\sigma(i)))$ form the edges of a tree
rooted at $\sigma(1)$. We call this rooted tree the {\em tree of the
  gain function}. In the study of the properties of this tree the
following lemma plays a key role. 
\begin{lemma}
\label{lem:parentf}  
  If $i<_{\sigma^{-1}} j$ and $p(j)<_{\sigma^{-1}} i$ hold then we have
  $w(i,j)=g(j)-g(i)-1$, $w(p(j),i)=g(i)-g(p(j))$ and
  $p(j)\leq_{\sigma^{-1}} p(i)$. 
\end{lemma}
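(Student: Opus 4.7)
The plan is to set $p = p(j)$ and exploit the chain $p <_{\sigma^{-1}} i <_{\sigma^{-1}} j$. In a separated deformation every ``forward'' edge in $\sigma$-order is present and has nonnegative weight (this is the content of the totally ordered poset of gains together with the fact that the corresponding nonnegative edge cannot point ``backwards'' against $x_{\sigma(1)}>x_{\sigma(2)}>\cdots$). Hence the three edges $p\to i$, $i\to j$ and $p\to j$ all carry nonnegative weight, which is exactly the hypothesis needed to apply the weak triangle inequality to the triple $(p,i,j)$.

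First I would collect two upper bounds. The recursion~(\ref{eq:grec}) applied at $k=p$ immediately gives $w(p,i)\le g(i)-g(p)$. For the edge $i\to j$, the key fact is that $p=p(j)$ is the \emph{rightmost} maximizer in the recursion for $g(j)$, so every index $k$ with $p<_{\sigma^{-1}} k<_{\sigma^{-1}} j$ satisfies the \emph{strict} inequality $g(k)+w(k,j)<g(j)$; taking $k=i$ and using the integrality of $\mathcal{A}$ sharpens this to $w(i,j)\le g(j)-g(i)-1$.

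Now I would apply the weak triangle inequality to $(p,i,j)$, obtaining $w(p,j)\le w(p,i)+w(i,j)+1$. Since $w(p,j)=g(j)-g(p)$ by the defining property of $p=p(j)$, chaining with the two bounds above produces the sandwich
\[
g(j)-g(p)=w(p,j)\le w(p,i)+w(i,j)+1\le \bigl(g(i)-g(p)\bigr)+\bigl(g(j)-g(i)-1\bigr)+1=g(j)-g(p).
\]
Equality must therefore hold in every step, which simultaneously yields $w(p,i)=g(i)-g(p)$ (claim~2) and $w(i,j)=g(j)-g(i)-1$ (claim~1). For claim~3, the equation $w(p(j),i)=g(i)-g(p(j))$ exhibits $p(j)$ as a maximizer in the recursion defining $g(i)$; since $p(i)$ is by definition the \emph{rightmost} such maximizer, we conclude $p(j)\le_{\sigma^{-1}} p(i)$.

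The only real bookkeeping point — and the closest thing to an obstacle — is verifying that the weak triangle inequality is genuinely applicable, i.e.\ that all three edges of the triple $(p,i,j)$ carry nonnegative weight; but this is exactly what separatedness together with the chain $p(j)<_{\sigma^{-1}} i<_{\sigma^{-1}} j$ guarantees, so the proof is essentially a single sandwich argument together with a strictness-plus-integrality step.
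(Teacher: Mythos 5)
Your proof is correct and follows essentially the same route as the paper: the same two upper bounds $w(i,j)\leq g(j)-g(i)-1$ (from $p(j)$ being the rightmost maximizer, sharpened by integrality) and $w(p(j),i)\leq g(i)-g(p(j))$, combined with the weak triangle inequality on $(p(j),i,j)$ into the same sandwich forcing equality everywhere, with the third claim read off from $w(p(j),i)=g(i)-g(p(j))$ exactly as in the paper. Your explicit check that all relevant weights are nonnegative (so the weak triangle inequality applies) only makes the argument slightly more detailed than the paper's.
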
  
\begin{proof}
  By our assumptions $i$ is to the right of $p(j)$, hence we have
  $$w(i,j)\leq g(j)-g(i)-1.$$
  By~\eqref{eq:gbound} we also have 
  $$w(p(j),i)\leq g(i)-g(p(j)).$$
The sum of the two inequalities, combined with the weak triangle
inequality yields
$$
g(j)-g(p(j))=w(p(j),j)\leq w(p(j),i)+w(i,j)+1\leq g(j)-g(p(j)).
$$
The left and the right end being equal all inequalities above must be
equalities. Hence we have $w(i,j)=g(j)-g(i)-1$ and
$w(p(j),i)=g(i)-g(p(j))$. Finally $p(j)\leq_{\sigma^{-1}} p(i)$ is a
direct consequence of $w(p(j),i)=g(i)-g(p(j))$.     
\end{proof}  
\begin{proposition}
The edges of the tree of the gain function are noncrossing: there is no
$i_1<_{\sigma^{-1}} i_2$ such that $p(i_1)<_{\sigma^{-1}}
p(i_2)<_{\sigma^{-1}} i_1<_{\sigma^{-1}} i_2$ would hold.  
\end{proposition}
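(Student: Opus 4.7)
The plan is to argue by contradiction and invoke Lemma~\ref{lem:parentf} directly. Suppose, toward a contradiction, that a crossing pair exists: there are $i_1 <_{\sigma^{-1}} i_2$ with
$$p(i_1) <_{\sigma^{-1}} p(i_2) <_{\sigma^{-1}} i_1 <_{\sigma^{-1}} i_2.$$
The first step is to observe that this configuration satisfies exactly the hypotheses of Lemma~\ref{lem:parentf} applied with the choice $i := i_1$ and $j := i_2$: indeed we need $i <_{\sigma^{-1}} j$ (which is $i_1 <_{\sigma^{-1}} i_2$, given) and $p(j) <_{\sigma^{-1}} i$ (which is $p(i_2) <_{\sigma^{-1}} i_1$, also part of the crossing assumption).

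Next I would read off the third conclusion of Lemma~\ref{lem:parentf}, namely $p(j) \leq_{\sigma^{-1}} p(i)$, which in our notation becomes $p(i_2) \leq_{\sigma^{-1}} p(i_1)$. But this is in direct contradiction with the very first inequality $p(i_1) <_{\sigma^{-1}} p(i_2)$ of the hypothesized crossing, so no such pair can exist and the edges of the tree of the gain function are noncrossing.

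I do not anticipate any real obstacle: the whole proof is a one-line application of Lemma~\ref{lem:parentf}, since the lemma's third conclusion was essentially tailor-made to forbid this exact crossing configuration. The only thing worth checking carefully is that the parent function is well-defined on both vertices (it is, since $i_1, i_2 >_{\sigma^{-1}} \sigma(1)$ because both have strictly smaller elements on their left, namely $p(i_1)$), and that the strict inequalities in the crossing are handled correctly (the contradiction $p(i_1) <_{\sigma^{-1}} p(i_2)$ versus $p(i_2) \leq_{\sigma^{-1}} p(i_1)$ is immediate regardless of whether the lemma yields equality or strict inequality).
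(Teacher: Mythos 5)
Your proof is correct and is essentially identical to the paper's: both argue by contradiction and apply Lemma~\ref{lem:parentf} with $i=i_1$, $j=i_2$ (the hypotheses $i_1<_{\sigma^{-1}}i_2$ and $p(i_2)<_{\sigma^{-1}}i_1$ being part of the assumed crossing) to conclude $p(i_2)\leq_{\sigma^{-1}}p(i_1)$, contradicting $p(i_1)<_{\sigma^{-1}}p(i_2)$. No issues.
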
 
\begin{proof}
Assume the contrary. By Lemma~\ref{lem:parentf}, $i_1<_{\sigma^{-1}} i_2$
and $p(i_2)<_{\sigma^{-1}} i_1$ imply $p(i_2)\leq_{\sigma^{-1}} p(i_1)$,
in contradiction with $p(i_1)<_{\sigma^{-1}} p(i_2)$. 
\end{proof}  
\begin{corollary}
\label{cor:Catalan}  
The number of possible types of trees of the gain function is a Catalan
number.  
\end{corollary}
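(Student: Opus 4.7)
The plan is to reduce this to the classical Catalan recursion by decomposing the tree of the gain function according to the subtrees hanging off the root. Throughout I will work in the order $\sigma$ and, for notational transparency, relabel so that $\sigma(i)=i$. Under this relabeling, a tree of the gain function becomes a rooted tree $T$ on $\{1,2,\ldots,n\}$ with root $1$, in which every non-root vertex $i$ has parent $p(i)<i$, and the noncrossing condition of the previous proposition holds. My aim is to count such trees and show that their number is $C_{n-1}$, the $(n-1)$-st Catalan number.

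The first step is to note that vertex $2$ is forced to be a child of the root, since $p(2)<2$ leaves no other option. I would then list the full set $2=c_1<c_2<\cdots<c_m$ of children of the root, set $c_{m+1}:=n+1$, and argue from the noncrossing property that the subtree rooted at $c_j$ has vertex set exactly $[c_j,c_{j+1}-1]$. The crux is that for any $w\in(c_j,c_{j+1})$, applying the noncrossing condition to the pair of edges $(1,c_j)$ and $(p(w),w)$ (taking $i_1=c_j$ and $i_2=w$) forces $p(w)\ge c_j$; iterating along the strictly decreasing ancestral chain of $w$ then shows the chain stays inside $[c_j,c_{j+1}-1]$ until it reaches $c_j$, so $w$ is a descendant of $c_j$. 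Conversely, no descendant of $c_j$ can leave this interval without producing a crossing with the edge $(1,c_{j+1})$. Restricted to the interval, the noncrossing condition is exactly the noncrossing condition for the induced subtree rooted at its leftmost element.

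With this decomposition in hand, the rest is routine. Letting $f(n)$ denote the number of such trees and setting $n_j=c_{j+1}-c_j$, summing over the shape of the root's children gives
\[
f(n)=\sum_{m\ge 1}\ \sum_{\substack{n_1+\cdots+n_m=n-1\\ n_j\ge 1}} f(n_1)\cdots f(n_m)\quad(n\ge 2),\qquad f(1)=1.
\]
Passing to $F(x)=\sum_{n\ge 1} f(n) x^n$, this becomes $F(x)=x+F(x)^2$, which is the functional equation characterizing the Catalan generating function, so $f(n)=C_{n-1}$.

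The main obstacle is the interval-decomposition step: once one sees that the noncrossing condition forces the child-subtrees to occupy consecutive blocks of labels, the Catalan recursion falls out automatically. The rest is either forced by the definition of the parent function or a standard generating-function manipulation.
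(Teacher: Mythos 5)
Your proposal is correct, and it supplies exactly the argument the paper leaves implicit: the paper states the corollary with no written proof, treating the Catalan count of noncrossing trees with root $\sigma(1)$ and each parent to the left (the Proposition just proved) as standard. Your block decomposition along the children of the root is sound -- the interval claim and the gluing step check out -- and the resulting recursion does give $F(x)=x+\frac{xF(x)}{1-F(x)}$, which indeed simplifies to $F(x)=x+F(x)^2$ and hence $f(n)=C_{n-1}$. The only caveat, shared with the paper, is that both arguments count the tree shapes permitted by the noncrossing condition; neither addresses whether every such shape is actually realized by a gain function, so your proof matches the paper's level of precision.
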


\section{Contiguous integral deformations}
\label{sec:contiguous}

\begin{definition}
We call an integral deformation of the braid arrangement in $V_{n-1}$ {\em
  contiguous}  if, for every $i<j$, the set $A_{ij}=\{a_{ij}^{(1)},
a_{ij}^{(2)}, \ldots, a_{ij}^{(n_{ij})}\}$ is a contiguous set
$[\alpha(i,j),\beta(i,j)]=\{\alpha(i,j),\alpha(i,j)+1,\ldots,\beta(i,j)\}$
of integers. 
\end{definition}  

Since the equation $x_i-x_j=c$ is equivalent to the equation
$x_j-x_i=-c$, we may consistently extend our notation by setting
\begin{equation}
  \alpha(j,i)=-\beta(i,j) \quad\mbox{and}\quad \beta(j,i)=-\alpha(i,j)
  \quad\mbox{for $1\leq i<j\leq n$.}
\end{equation}
The truncated affine arrangements $\mathcal{A}^{a,b}_{n-1}$
are contiguous deformations of the braid
arrangement: we have $\alpha(i,j)=1-a$ and $\beta(i,j)=b-1$  for $1\leq
i<j\leq n$.

We may specialize Definition~\ref{def:valid} to such arrangements as follows.
\begin{proposition}
  \label{prop:validt}
Let $\mathcal{A}$ be a contiguous integral deformation of the braid
arrangement in $V_{n-1}$, defined by the intervals
$[\alpha(i,j),\beta(i,j)]$. A valid associated digraph on the vertex set
$\{1,2,\ldots,n\}$  satisfies for each $(i,j)$ exactly one of the following:
\begin{enumerate}
\item There is no directed edge $i\rightarrow j$, and there
  is a directed edge $i\leftarrow j$ of weight $-\alpha(i,j)$.
\item There is a directed edge $i\rightarrow j$ of weight
  $w$, and there is a directed edge $i\leftarrow j$ of
  weight $-w-1$, for some for some integer $w\in [\alpha(i,j),\beta(i,j)-1]$. 
\item There is a directed edge $i\rightarrow j$ of weight
  $\beta(i,j)$, and there is no directed edge $i\leftarrow j$.
\end{enumerate}  
\end{proposition}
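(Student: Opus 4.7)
The plan is to deduce Proposition~\ref{prop:validt} directly from Definition~\ref{def:valid} by substituting the explicit form of the hyperplane constants that comes from the contiguity hypothesis. Fixing a pair $i<j$, the contiguous integral condition lets me write $a_{i,j}^{(k)}=\alpha(i,j)+k-1$ for $1\leq k\leq n_{i,j}$, with $n_{i,j}=\beta(i,j)-\alpha(i,j)+1\geq 1$. Since $n_{i,j}\geq 1$, case~(1) of Definition~\ref{def:valid} (the empty case $n_{i,j}=0$) never occurs, which is precisely why the present proposition lists only three cases rather than four.

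Next I would verify the three remaining cases term by term. Case~(2) of Definition~\ref{def:valid} produces an edge $i\leftarrow j$ of weight $-a_{i,j}^{(1)}=-\alpha(i,j)$ with no edge $i\to j$, matching case~(1) of the proposition. Case~(3) produces an edge $i\to j$ of weight $a_{i,j}^{(k)}=\alpha(i,j)+k-1$ and an edge $i\leftarrow j$ of weight $-a_{i,j}^{(k+1)}=-\alpha(i,j)-k$ for some $1\leq k\leq n_{i,j}-1$; writing $w=\alpha(i,j)+k-1$, the weight of $i\leftarrow j$ becomes $-w-1$ and the range of admissible $k$ translates into $w\in[\alpha(i,j),\beta(i,j)-1]$, giving case~(2) of the proposition. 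Finally case~(4) of Definition~\ref{def:valid} gives $i\to j$ of weight $a_{i,j}^{(n_{i,j})}=\alpha(i,j)+n_{i,j}-1=\beta(i,j)$ and no $i\leftarrow j$, matching case~(3) of the proposition.

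Since the statement refers to arbitrary pairs $(i,j)$, I would then observe that the extended convention $\alpha(j,i)=-\beta(i,j)$ and $\beta(j,i)=-\alpha(i,j)$ renders the three-case description symmetric under interchange of $i$ and $j$: cases~(1) and~(3) get swapped, while case~(2) is preserved under the substitution $w\mapsto -w-1$. Hence the analysis for $i<j$ automatically yields the same trichotomy for $i>j$.

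There is no genuine obstacle here: the proposition is pure definitional bookkeeping. The only point that requires attention is tracking the shift between the index $k$ (used by Definition~\ref{def:valid}) and the weight $w$ (used in the proposition) so that the endpoints of the interval $[\alpha(i,j),\beta(i,j)-1]$ in case~(2) come out correctly, and confirming that integrality plus contiguity are exactly what forces the paired weights on $i\to j$ and $i\leftarrow j$ to be consecutive integers $w$ and $-w-1$.
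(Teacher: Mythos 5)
Your proof is correct and matches the paper's treatment: the paper gives no separate argument, presenting Proposition~\ref{prop:validt} as an immediate specialization of Definition~\ref{def:valid} obtained by writing $a_{i,j}^{(k)}=\alpha(i,j)+k-1$, which is exactly the bookkeeping you carry out (including the index shift $w=\alpha(i,j)+k-1$ and the symmetry convention $\alpha(j,i)=-\beta(i,j)$, $\beta(j,i)=-\alpha(i,j)$ handled in the paper's follow-up remarks). Nothing is missing.
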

As we did right after Definition~\ref{def:valid}, we extend the weight
function to a function  
$w: \{1,2,\ldots,n\}\times \{1,2,\ldots,n\} \rightarrow {\mathbb R} \cup
\{-\infty\}$ by setting the following for all $i<j$: 
\begin{enumerate}
\item The weight $w(i,j)$ belongs to the set
  $[\alpha(i,j),\beta(i,j)]\cup\{-\infty\}$, and the weight $w(j,i)$ belongs
  to the set
  $[-\beta(i,j),-\alpha(i,j)]\cup\{-\infty\}=[\alpha(j,i),\beta(j,i)]\cup\{-\infty\} 
  $. 
\item If $w(i,j)=-\infty$ then $w(j,i)=-\alpha(i,j)=\beta(j,i)$ and if
  $w(i,j)=\beta(i,j)$ then $w(j,i)=-\infty$. 
\item For all other values of $w(i,j)$ we have $w(j,i)=-1-w(i,j)$. 
\end{enumerate}  
In other words, regardless of the order of $i$ and $j$,
$w(i,j)$ belongs to the set
  $[\alpha(i,j),\beta(i,j)]\cup\{-\infty\}$, and $w(i,j)=-\infty$ holds
if and only if $w(j,i)=\beta(j,i)$.

As we will see in Theorem~\ref{thm:a-bleq1} below, the $m$-acyclic
property is especially easy to check for truncated affine arrangements
satisfying $|a-b|\leq 1$. The following lemma still holds for all
contiguous integral deformations.

\begin{lemma}
  \label{lem:diagonals}
Let $C=(i_0,i_1,\ldots,i_k)$ be an $m$-ascending cycle of minimum length
in a valid weighted digraph associated to a contiguous integral deformations of
the braid arrangement. Then for any {\em diagonal pair} of
vertices of $C$, that is, for any  $\{i_r,i_s\}$ such that $i_r$ and $i_s$ are
not cyclically consecutive, the associated weight function satisfies
$-\infty\in\{w(i_r,i_s),w(i_s,i_r)\}$. 
\end{lemma}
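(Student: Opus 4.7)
My plan is a proof by contradiction, cutting the cycle $C$ along a finite diagonal and obtaining a strictly shorter $m$-ascending cycle. Suppose, for contradiction, that $\{i_r,i_s\}$ is a diagonal pair with both $w(i_r,i_s)$ and $w(i_s,i_r)$ finite. Without loss of generality $r<s$. Since $\{i_r,i_s\}$ is diagonal, $2\le s-r\le k-1$, so neither $i_r\to\cdots\to i_s$ nor the complementary arc $i_s\to\cdots\to i_k\to i_0\to\cdots\to i_r$ is a single edge. Define
$$
C_1=(i_r,i_{r+1},\ldots,i_s)\quad\text{and}\quad C_2=(i_s,i_{s+1},\ldots,i_k,i_0,\ldots,i_r),
$$
which are directed closed walks thanks to the (assumed) existence of the diagonal edges $i_s\to i_r$ (used in $C_1$) and $i_r\to i_s$ (used in $C_2$). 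A direct count gives $|C_1|=s-r+1$ and $|C_2|=k+2-(s-r)$, so $3\le|C_1|,|C_2|\le k$, each strictly less than $|C|=k+1$.

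Next I would compute weights. Every edge of $C$ appears in exactly one of $C_1,C_2$, while the diagonal edges contribute $w(i_s,i_r)$ to $C_1$ and $w(i_r,i_s)$ to $C_2$. Hence
$$
w(C_1)+w(C_2)=w(C)+w(i_r,i_s)+w(i_s,i_r).
$$
Because $\mathcal{A}$ is a contiguous integral deformation and both directions between $i_r$ and $i_s$ are present, Proposition~\ref{prop:validt} forces case~(2), giving $w(i_r,i_s)+w(i_s,i_r)=-1$. Combined with $w(C)\ge 0$ (the $m$-ascending hypothesis on $C$), this yields $w(C_1)+w(C_2)\ge -1$.

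Finally I would use integrality to extract a contradiction: since all finite edge weights are integers, both $w(C_1)$ and $w(C_2)$ are integers summing to at least $-1$, so at least one of them is $\ge 0$. That cycle is therefore an $m$-ascending cycle of length strictly less than $k+1$, contradicting the minimality of $C$. The only real subtlety, and what I expect to be the one substantive check, is the bookkeeping that vertices of $C$ (and hence of $C_1$ and $C_2$) are distinct --- but this follows from the minimality of $C$, since any repeated vertex would already split $C$ into two strictly shorter closed walks whose weights sum to $w(C)\ge 0$, giving a shorter $m$-ascending closed walk and hence (after further iteration) a shorter $m$-ascending cycle.
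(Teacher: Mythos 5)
Your proof is correct and is essentially the paper's own argument: you cut $C$ along the diagonal into the two closed walks $(i_r,\ldots,i_s)$ and $(i_s,\ldots,i_r)$, use $w(i_r,i_s)+w(i_s,i_r)=-1$ together with integrality, and play minimality of $C$ against the $m$-ascending hypothesis. The only difference is bookkeeping direction (you conclude one short cycle has nonnegative weight, the paper concludes $w(C)\leq -1$), which is the same computation.
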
  
\begin{proof}
Assume by contradiction that there is a pair $\{i_r,i_s\}$ of
cyclically not consecutive vertices such that
$-\infty\not\in\{w(i_r,i_s),w(i_s,i_r)\}$. In this case the real numbers
$w(i_r,i_s)$ and $w(i_s,i_r)$ must satisfy $w(i_r,i_s)+w(i_s,i_r)=-1$.
The cyclic lists $(i_r,i_{r+1},\ldots,i_s)$ and
$(i_s,i_{s+1},\ldots,i_r)$ represent shorter closed
walks in the associated weighted digraph, which can not be
$m$-ascending. Since all weights are integers we must have
$$
w(i_r,i_{r+1},\ldots,i_{s})\leq
-1\quad\mbox{and}\quad 
w(i_s,i_{s+1},\ldots,i_r)\leq
-1. 
$$
Taking the sum of these inequalities, after subtracting
$w(i_r,i_s)+w(i_s,i_r)=-1$ on both sides yields
$$
w(C)
=w(i_r,i_{r+1},\ldots,i_{s})+w(i_s,i_{s+1},\ldots,i_r)
-(w(i_r,i_s)+w(i_s,i_r))\leq-1 
$$
in contradiction with the closed walk represented by $C$ being
$m$-ascending. 
\end{proof}  

The next theorem is a generalization of~\cite[Theorem
  8.6]{Postnikov-Stanley}.  

\begin{theorem}
\label{thm:4gen}  
Consider a contiguous integral deformation of the braid arrangement in
$V_{n-1}$ satisfying
\begin{equation}
\label{eq:case1}
\beta(i,k)\leq \beta(i,j)+\beta(j,k)+1 \quad\mbox{for all triplets $\{i,j,k\}$.}
\end{equation}
 
Then any valid associated weighted digraph is $m$-acyclic if and only if
it contains no $m$-ascending cycle of length at most four. 
\end{theorem}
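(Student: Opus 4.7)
The ``only if'' direction is immediate, since an $m$-acyclic digraph in particular has no $m$-ascending cycle of length at most four. For the converse, suppose that $D$ is a valid associated weighted digraph with no $m$-ascending cycle of length at most four, and argue by contradiction: assume $D$ has an $m$-ascending cycle, and let $C=(i_0,i_1,\ldots,i_{k-1})$ be a shortest one, with indices read modulo $k$. Then $k\geq 5$, so for every $r$ the pair $\{i_{r-1},i_{r+1}\}$ is diagonal in $C$; by Lemma~\ref{lem:diagonals} combined with Proposition~\ref{prop:validt}, exactly one of the two directed edges between $i_{r-1}$ and $i_{r+1}$ is in $D$, and its weight equals the corresponding value of $\beta$.

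My plan is to perform a case split at each consecutive triple $(i_{r-1},i_r,i_{r+1})$. In Case~A the reverse arc $i_{r+1}\to i_{r-1}$ is in $D$, so the triangle $i_{r-1}\to i_r\to i_{r+1}\to i_{r-1}$ is a directed $3$-cycle; being not $m$-ascending, it satisfies
\[
w(i_{r-1},i_r)+w(i_r,i_{r+1})\;\leq\; -1-\beta(i_{r+1},i_{r-1}).
\]
In Case~B the forward arc $i_{r-1}\to i_{r+1}$ is in $D$, and the shortcut cycle $(i_0,\ldots,i_{r-1},i_{r+1},\ldots,i_{k-1})$ of length $k-1$ is not $m$-ascending (by the minimality of $C$ if $k-1\geq 5$, by our assumption if $k-1=4$); together with $w(i_0,i_1,\ldots,i_{k-1})\geq 0$ this yields
\[
w(i_{r-1},i_r)+w(i_r,i_{r+1})\;\geq\; 1+\beta(i_{r-1},i_{r+1}).
\]

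I then combine these local constraints with hypothesis~\eqref{eq:case1} applied to the triple $(i_{r-1},i_r,i_{r+1})$ in both orderings, and with the uniform bound $\alpha(i_s,i_{s+1})\leq w(i_s,i_{s+1})\leq\beta(i_s,i_{s+1})$ on the edges of $C$. The conclusion is drawn by examining the cyclic pattern of Case~A and Case~B positions along $C$. The model is the Linial arrangement, where $\beta\equiv 1$ along label-ascents and $-1$ along label-descents: Case~A at $r$ then forces $w(i_{r-1},i_r)=w(i_r,i_{r+1})=-1$ while Case~B forces both to equal $+1$, so any two cyclically adjacent triples must agree on the case. The only possibilities are then pure Case~A, which makes the total weight of $C$ equal to $-k<0$ and contradicts $m$-ascendance, and pure Case~B, which makes the vertex labels of $C$ strictly increasing and thus never closes up. The general case replicates this rigidity by observing that the weight $w(i_r,i_{r+1})$ is shared between the two triples at $r$ and at $r+1$: a Case~A at one and a Case~B at the other impose an upper and a lower bound on $w(i_r,i_{r+1})$ that, once~\eqref{eq:case1} and integrality are used, are to be shown mutually inconsistent.

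The main obstacle is this last step: verifying that the integrality of the weights, together with the weak triangle inequality~\eqref{eq:case1}, actually makes the overlapping Case~A and Case~B bounds conflict at the shared edge for every consecutive pair of triples, so that the resulting cyclic inconsistency forces the desired contradiction even when the slack $\beta-\alpha$ is strictly positive.
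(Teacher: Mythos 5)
Your setup is fine as far as it goes: the ``only if'' direction is trivial, the reduction to a shortest $m$-ascending cycle $C=(i_0,\ldots,i_{k-1})$ with $k\ge 5$ is the right move, and your Case~A and Case~B inequalities at a consecutive triple are correctly derived from Lemma~\ref{lem:diagonals}, Proposition~\ref{prop:validt}, minimality and integrality. But the proof is not complete: the step you yourself flag as ``the main obstacle'' --- that a Case~A triple and a Case~B triple sharing the edge $i_r\rightarrow i_{r+1}$ give incompatible bounds, and that the pure patterns also self-destruct --- is exactly the heart of the theorem, and it is doubtful it can be settled with only the data you allow yourself (short diagonals $\{i_{r-1},i_{r+1}\}$ and \eqref{eq:case1} at consecutive triples). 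Your Case~B lower bound on $w(i_r,i_{r+1})$ involves $\beta(i_r,i_{r+2})-\beta(i_{r+1},i_{r+2})$ and your Case~A upper bound involves $\beta(i_r,i_{r-1})-\beta(i_{r+1},i_{r-1})$, while \eqref{eq:case1} only ever yields upper bounds on such differences ($\beta(i,k)-\beta(j,k)\le\beta(i,j)+1$), never the lower bounds a conflict at the shared edge would require. The pure patterns are not immediately fatal either: summing the Case~A inequalities over all $r$ gives only $2w(C)\le -k-\sum_r\beta(i_{r+1},i_{r-1})$, which is no contradiction when the $\beta$'s may be negative. Indeed the rigidity fails already in your Linial sanity check: Case~A forces $w(i_{r-1},i_r)=w(i_r,i_{r+1})=-1$ only when $i_{r+1}<i_{r-1}$ (so that $\beta(i_{r+1},i_{r-1})=1$); when $i_{r+1}>i_{r-1}$ it only gives $w(i_{r-1},i_r)+w(i_r,i_{r+1})\le 0$, and the analogous caveat applies to Case~B. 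So as written there is a genuine gap, not just an omitted routine verification.

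The paper escapes this by using \emph{all} diagonals of $C$, each of which carries a single arc of weight $\beta$ by Lemma~\ref{lem:diagonals}. Calling a vertex a diagonal head (tail) if some diagonal enters (leaves) it, one first rules out the possibility that every vertex is exclusively a head or exclusively a tail, and then finds a vertex $i_r$ with an incoming diagonal $i_s\rightarrow i_r$ and an outgoing diagonal $i_r\rightarrow i_{s'}$ whose far endpoints are cyclically consecutive, $s'=s\pm 1$. Splitting $C$ along these two diagonals produces two strictly shorter closed walks, each of weight at most $-1$ by minimality and integrality; adding the two inequalities, adjusting by $w(i_s,i_{s'})$ and using $w(C)\ge 0$ yields $\beta(i_s,i_r)+\beta(i_r,i_{s'})\le -2+\beta(i_s,i_{s'})$, which contradicts \eqref{eq:case1} applied to $\{i_s,i_r,i_{s'}\}$. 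Note that here the middle vertex of the triangle inequality is the far ``hub'' $i_r$, not a vertex lying between two consecutive vertices of $C$; this global use of long diagonals is precisely the input your local, consecutive-triple scheme is missing.
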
  
\begin{proof}
It suffices to show that
there is no valid associated weighted digraph in which a shortest $m$-ascending
cycle of length $k\geq 5$ would exist. Assume by contradiction that 
$C=(i_0,i_1,\ldots,i_{k-1})$ is such an is an $m$-ascending cycle.
Recall that, by Lemma~\ref{lem:diagonals}, only one of the directed edges
$i_r\rightarrow i_s$ and $i_{r}\leftarrow i_{s}$ is present for each
diagonal pair $\{i_r,i_s\}$. Let us call $i_s\in
\{i_0,i_1,\ldots,i_{k-1}\}$ a {\em diagonal tail (diagonal head) vertex} if
$i_s\rightarrow i_r$ ($i_s\leftarrow i_r$) holds for some diagonal pair
$\{i_r,i_s\}$. Clearly, for $k\geq 4$, each vertex $i_s$ is either
a diagonal tail or a diagonal head vertex, some of them may be both. If 
each $i_s\in\{i_0,i_1,\ldots,i_{k-1}\}$ is either only a diagonal
tail or only a diagonal head then we may reach a contradiction as follows.
At least one $i_r$ must be a diagonal head, without loss of generality
we may assume $r=0$, hence we must have $i_2\rightarrow i_0$,
$i_3\rightarrow i_0$, \ldots, $i_{k-2}\rightarrow i_0$, as a consequence
$i_2$ and $i_{k-2}$ are diagonal tails and by $k\geq 5$ the pair
$\{i_2,i_{k-2}\}$ is a diagonal pair for which neither $i_2\rightarrow
i_{k-2}$ nor $i_2\leftarrow i_{k-2}$ can hold, in contradiction with
Lemma~\ref{lem:diagonals}. Hence there is at least one vertex 
$i_r\in \{i_0,i_1,\ldots,i_{k-1}\}$ which is simultaneously a diagonal head
and a diagonal tail: $i_r\leftarrow i_s$ and $i_r\rightarrow i_{s'}$
hold for some $s,s'\in\{0,1,\ldots,k-1\}-\{r-1,r,r+1\}$. (Additions
and subtractions in the subscripts are performed modulo $k$.) 
Without loss of generality we may assume that $s$ and $s'$ are
cyclically consecutive. 

{\bf\noindent Case 1:} $s'=s+1$ holds. The cycles
$(i_r,i_{r+1},\ldots,i_s)$ and $(i_{s'},i_{s'+1},\ldots,\ldots i_{r-1},i_r)$
are shorter than $C$, hence they can not be $m$-ascending:
$$
w(i_r,i_{r+1},\ldots,i_s)\leq
-1\quad\mbox{and}\quad
w(i_{s'},i_{s'+1},\ldots,i_{r})\leq -1
\quad\mbox{must hold}.
$$
Adding $w(i_s,i_{s'})$ on both sides to the sum of the above
inequalities we obtain
$$
w(i_s,i_r)+w(i_r,i_s')+w(C)\leq -2+w(i_s,i_{s'}).
$$
Since $w(C)$ is nonnegative, we obtain 
\begin{equation}
\label{eq:short}  
  w(i_s,i_r)+w(i_r,i_{s'})\leq -2+w(i_s,i_{s'}).
\end{equation}  
Since $\{i_s,i_r\}$ and $\{i_r,i_{s'}\}$ are diagonals, we have
$w(i_s,i_r)=\beta(i_s,i_r)$ and  $w(i_r,i_{s'})=\beta(i_r,i_{s'})$.
Furthermore $w(i_s,i_{s'})$ is at most
$\beta(i_s,i_{s'})$. Equation~\eqref{eq:short} implies
$$
  \beta(i_s,i_r)+\beta(i_r,i_{s'})\leq -2+\beta(i_s,i_{s'}),
$$
in contradiction with \eqref{eq:case1}.   

{\bf\noindent Case 2:} $s'=s-1$ holds. This case is analogous
to the previous one. Now the cycles $(i_r,i_{r+1},\ldots,i_s)$ and
$(i_{s'},i_{s'+1},\ldots,\ldots i_{r-1},i_r)$ both contain the directed
edge $i_{s'}\rightarrow i_s$, and, in analogy to (\ref{eq:short}) we
obtain the inequality  
\begin{equation}
\label{eq:short2}  
  w(i_s,i_r)+w(i_r,i_s')\leq -2-w(i_{s'},i_s).
\end{equation}
Once again, $\{i_s,i_r\}$ and $\{i_r,i_{s'}\}$ are diagonals, hence the
left hand side of \eqref{eq:short2} is
$\beta(i_s,i_r)+\beta(i_r,i_{s'})$ whereas the right hand side is at
most $-2-\alpha(i_{s'},i_s)=-2+\beta(i_s,i_{s}')$. Once again, we obtain
  a contradiction with \eqref{eq:case1}. 

\end{proof}

\begin{theorem}
  \label{thm:4}
If the truncated affine arrangement $\mathcal{A}^{a,b}_{n-1}$ satisfies   
$a,b\geq 0$, then a valid associated weighted digraph is $m$-acyclic if and only if
it contains no $m$-ascending cycle of length at most four. 
\end{theorem}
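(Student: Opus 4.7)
The plan is to deduce Theorem~\ref{thm:4} directly from Theorem~\ref{thm:4gen} by verifying that the truncated affine arrangement $\mathcal{A}^{a,b}_{n-1}$ with $a,b\geq 0$ satisfies the hypothesis~\eqref{eq:case1}. Once this is done, the conclusion—that $m$-acyclicity is detected by cycles of length at most four—is immediate.

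First I will record the values of $\beta$ for $\mathcal{A}^{a,b}_{n-1}$. By definition the hyperplanes $x_i-x_j=c$ with $1\leq i<j\leq n$ have $c$ running over $\{1-a,2-a,\ldots,b-1\}$, so for $i<j$ we have $\alpha(i,j)=1-a$ and $\beta(i,j)=b-1$, and the extension $\alpha(j,i)=-\beta(i,j)$, $\beta(j,i)=-\alpha(i,j)$ gives $\beta(j,i)=a-1$ when $i<j$. Thus, for any ordered pair $(i,j)$ with $i\neq j$, $\beta(i,j)\in\{a-1,b-1\}$: it equals $b-1$ when $i<j$ and $a-1$ when $i>j$.

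Next I will verify~\eqref{eq:case1}, namely $\beta(i,k)\leq \beta(i,j)+\beta(j,k)+1$, by running through the six possible relative orderings of the distinct indices $i,j,k$. In each case the inequality reduces to either $a\geq 0$ or $b\geq 0$. For instance, when $i<j<k$ the inequality becomes $b-1\leq 2(b-1)+1$, i.e.\ $b\geq 0$; when $i<k<j$ it becomes $b-1\leq (b-1)+(a-1)+1$, i.e.\ $a\geq 0$; the remaining four orderings ($j<i<k$, $j<k<i$, $k<i<j$, $k<j<i$) are handled identically, each reducing to one of $a\geq 0$ or $b\geq 0$. Under the hypothesis $a,b\geq 0$ all six cases hold, so~\eqref{eq:case1} is satisfied for every triplet $\{i,j,k\}$.

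With~\eqref{eq:case1} established, Theorem~\ref{thm:4gen} applies verbatim and yields the desired conclusion: every valid associated weighted digraph for $\mathcal{A}^{a,b}_{n-1}$ is $m$-acyclic if and only if it has no $m$-ascending cycle of length at most four. The only nontrivial step is the case analysis above, which is entirely routine; there is no genuine obstacle here, since the entire content of the theorem has been packaged into Theorem~\ref{thm:4gen}, and the hypothesis $a,b\geq 0$ was precisely tailored to make~\eqref{eq:case1} hold.
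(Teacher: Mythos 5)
Your proposal is correct and follows essentially the same route as the paper: both reduce Theorem~\ref{thm:4} to verifying~\eqref{eq:case1} and then invoke Theorem~\ref{thm:4gen}, the only cosmetic difference being that the paper first uses Remark~\ref{rem:symmetry} to assume $a\leq b$ and thereby collapses your six orderings into three cases, while you check all six directly (and correctly).
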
  
\begin{proof}
As noted in Remark~\ref{rem:symmetry}, without loss of generality we may 
assume that $a\leq b$ holds. By Theorem~\ref{thm:4gen}, it suffices to
verify the validity of the inequality \eqref{eq:case1}.
We distinguish several cases depending on the relative order of
$i$, $j$ and $k$. If $i<j<k$ holds then
$\beta(i,j)=\beta(j,k)=\beta(i,k)=b-1$, and Equation~\eqref{eq:case1} is
equivalent to  $b-1\leq 2(b-1)+1$, which is equivalent to $b\geq 0$.  

If $i>j>k$ holds then $\beta(i,j)=\beta(j,k)=\beta(i,k)=a-1$, and
Equation~\eqref{eq:case1} is equivalent to $a-1\leq 2(a-1)+1$, which is
equivalent to $a\geq 0$.   

In all other subcases at least one of $\beta(i,j)$ and $\beta(j,k)$ is
$b-1$, the right hand side of (\ref{eq:case1}) is at least $(a-1)+(b-1)+1$,
and the left hand side of (\ref{eq:case1}) is at most $b-1$. The
inequality $b-1\leq (a-1)+(b-1)+1$ is equivalent to $a\geq 0$.
\end{proof}

It may be possible to extend Theorem~\ref{thm:4} to further cases, but
this can not be done by inspecting the value of $\min(a,b)=a$ alone. The
next two examples 
illustrate this fact and indicate some of the 
difficulties in discussing the truncated affine arrangements which do
not fall under the validity of Theorem~\ref{thm:4}.

\begin{example}
\label{ex:a=-1b=4}
      {\em In the case when $a=-1$ and $b=4$ there is a valid
        weighted digraph associated to the truncated affine arrangement
        $\mathcal{A}^{-1,4}_{4}$ whose shortest 
        $m$-ascending cycle has length $5$. An 
        example is shown in Figure~\ref{fig:m5}. This weighted digraph has $5$
        vertices and for each $i,j$ we have either $i<j$, $w(i,j)=b-1=3$
        and $w(j,i)=-\infty$, or we have $i>j$, $w(i,j)=a-1=-2$
        and $w(j,i)=-\infty$. In other words, only one of the arrows
        $i\rightarrow j$ or $j\rightarrow i$ is present, the digraph is
        a tournament. The weight of the directed cycle $C=(1,3,5,4,2)$
        is $2\cdot 3+3\cdot (-2)=0$, this cycle is $m$-ascending. To show
        that all other directed cycles have         negative weight,
        consider the following ``potential function'' $\phi$ on the vertices:
        $\phi(1)=0$, $\phi(2)=2$, $\phi(3)=3$, $\phi(4)=4$ and
        $\phi(5)=6$. The values 
        of this function are the circled numbers in
        Figure~\ref{fig:m5}. For any arrow $i\rightarrow j$ that belongs to 
$C$, we have $w(i,j)=\phi(j)-\phi(i)$ and for all other arrows $i\rightarrow
        j$ we have $w(i,j)<\phi(j)-\phi(i)$. Hence any cycle containing at
        least one directed edge that does not belong to $C$ must have
        negative weight. 
      }
\end{example}  
\begin{figure}[h]
\centering
\input{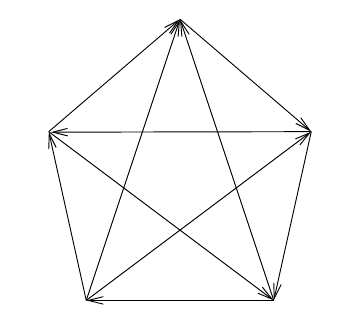_t}
\caption{A shortest $m$-ascending cycle of length $5$}
\label{fig:m5}
\end{figure}  
\begin{example}
\label{ex:a=-1b=3}
      {\em In the case when $a=-1$ and $b=3$, the truncated affine
        arrangement $\mathcal{A}^{-1,3}_{n-1}$ consists of the
        hyperplanes $x_i-x_j= 2$ for $1\leq i<j\leq n$. A
        dilation by a factor of $1/2$ puts the regions of
        this hyperplane arrangement in bijection with the regions of
        the Linial arrangement $\mathcal{A}^{0,2}_{n-1}$ to which
        Theorem~\ref{thm:4}, as well as~\cite[Theorem
  8.6]{Postnikov-Stanley}, are applicable.}
\end{example}  
        
Looking at the formulas for the number of regions and the characteristic
polynomial $\mathcal{A}^{ab}_{n-1}$ in~\cite{Postnikov-Stanley},
counting regions in a combinatorial way promises to be easier in the
cases when $|a-b|\leq 1$ and $a+b\geq 2$ hold for the parameters $a$ and
$b$. As we will see in Proposition~\ref{prop:trawt}  below, these
are exactly the cases when the truncated 
affine arrangement is separated and satisfies the weak triangle
inequality (as defined in Section~\ref{sec:separated}). As a direct
consequence of the definitions we obtain:
\begin{corollary}
\label{cor:contsep}
A contiguous integral deformation of the braid arrangement in $V_{n-1}$
is separated if and only if $0$ belongs to $[\alpha(i,j),\beta(i,j)]$
for all $1\leq i< j\leq n$.
\end{corollary}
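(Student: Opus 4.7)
The plan is to reduce the biconditional to a pointwise statement in each pair $(i,j)$ with $1\le i<j\le n$, and then observe that each such pointwise statement is an immediate consequence of the two definitions in play. Fix such a pair and invoke the contiguous integral hypothesis: the set $\{a_{i,j}^{(1)},a_{i,j}^{(2)},\ldots,a_{i,j}^{(n_{i,j})}\}$ is by definition the integer interval $\{\alpha(i,j),\alpha(i,j)+1,\ldots,\beta(i,j)\}$. Since $\alpha(i,j)$ and $\beta(i,j)$ are integers and $0$ is itself an integer, $0$ lies in this finite set of consecutive integers if and only if $\alpha(i,j)\le 0\le \beta(i,j)$, which is the same as asserting $0\in[\alpha(i,j),\beta(i,j)]$ regardless of whether we read the right-hand side as a real interval or as its intersection with $\mathbb{Z}$.

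Second, I would conjoin this pointwise equivalence over all pairs $1\le i<j\le n$. The definition of a \emph{separated} deformation is exactly that $0$ belongs to $\{a_{i,j}^{(1)},\ldots,a_{i,j}^{(n_{i,j})}\}$ for every such pair, and the condition on the right-hand side of the corollary is the quantified form of ``$0\in[\alpha(i,j),\beta(i,j)]$.'' By the pointwise equivalence just established, these two universally quantified conditions are the same, which is the claim. There is essentially no obstacle to overcome: the whole content is the trivial but worth-noting observation that for a contiguous interval of integers, membership of the integer $0$ in the set is equivalent to $0$ lying between its two endpoints.
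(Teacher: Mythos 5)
Your proof is correct and matches the paper, which simply records this corollary as a direct consequence of the definitions of \emph{contiguous} and \emph{separated}: since the set $\{a_{i,j}^{(1)},\ldots,a_{i,j}^{(n_{i,j})}\}$ is exactly the integer interval $[\alpha(i,j),\beta(i,j)]$ and $0$ is an integer, membership of $0$ in the set is the same as $\alpha(i,j)\le 0\le\beta(i,j)$, pair by pair. Nothing further is needed.
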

\begin{corollary}
\label{cor:trasep}
The truncated affine arrangement $\mathcal{A}^{ab}_{n-1}$ satisfying
$a\leq b$ and $a+b\geq 2$ is separated if and only if $a\geq 1$ holds.
\end{corollary}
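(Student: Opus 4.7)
The plan is to derive this as a direct specialization of Corollary~\ref{cor:contsep}. First, I would identify the intervals $[\alpha(i,j),\beta(i,j)]$ for the truncated affine arrangement: inspecting the defining hyperplanes $x_i-x_j=1-a,2-a,\ldots,b-1$ for $1\leq i<j\leq n$, we have $\alpha(i,j)=1-a$ and $\beta(i,j)=b-1$ for every pair $i<j$. (The hypothesis $a+b\geq 2$ is exactly what guarantees $1-a\leq b-1$, so the interval is nonempty and $\mathcal{A}^{a,b}_{n-1}$ actually consists of some hyperplanes.)

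Next, I would apply Corollary~\ref{cor:contsep}: separation is equivalent to $0\in[1-a,b-1]$, which is the conjunction of the two inequalities $1-a\leq 0$ and $0\leq b-1$, i.e., $a\geq 1$ and $b\geq 1$.

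Finally, I would use the hypothesis $a\leq b$ to collapse these two conditions into the single condition $a\geq 1$: if $a\geq 1$ then $b\geq a\geq 1$, so both inequalities hold; conversely, if the arrangement is separated then in particular $a\geq 1$. This gives the stated equivalence. No step here is an obstacle — the corollary is a bookkeeping consequence of the previously established Corollary~\ref{cor:contsep} together with the fact that, for truncated affine arrangements, the intervals $[\alpha(i,j),\beta(i,j)]$ do not depend on $(i,j)$.
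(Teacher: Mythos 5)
Your proposal is correct and matches the paper's intended argument: the paper presents this corollary as an immediate consequence of Corollary~\ref{cor:contsep} applied with $\alpha(i,j)=1-a$ and $\beta(i,j)=b-1$, exactly as you do, with the hypotheses $a\leq b$ and $a+b\geq 2$ disposing of the auxiliary condition $b\geq 1$. Nothing is missing.
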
   
Next we characterize the cases when the weak triangle inequality is
satisfied by a separated contiguous integral deformation of the braid
arrangement.  
\begin{theorem}
\label{thm:wtc}  
Consider a separated contiguous integral deformation of the braid
arrangement in $V_{n-1}$. This 
satisfies the weak triangle inequality if and only if
\begin{equation}
\label{eq:wtc}  
\beta(i,j)\leq \beta(i,k)+1 \quad\mbox{and}\quad
\beta(i,j)\leq\beta(k,j)+1
\end{equation}
hold for all $\{i,j,k\}\subseteq \{1,2,\ldots,n\}.$
\end{theorem}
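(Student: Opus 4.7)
My plan is to establish the two directions of the equivalence separately, using Proposition~\ref{prop:validt} to understand how valid weights behave.

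For the implication from \eqref{eq:wtc} to the weak triangle inequality, I would fix any valid $m$-acyclic weighted digraph and any triple $(i,j,k)$ with $w(i,j), w(j,k) \geq 0$, and argue by cases on whether the forward weights attain their maxima. If $w(i,j) = \beta(i,j)$, then by Proposition~\ref{prop:validt} $w(j,i) = -\infty$; applying \eqref{eq:wtc} in the form $\beta(i,k) \leq \beta(i,j) + 1$ (obtained from the first inequality by interchanging the roles of $j$ and $k$) yields
$$
w(i,k) \leq \beta(i,k) \leq \beta(i,j) + 1 \leq w(i,j) + w(j,k) + 1.
$$
The case $w(j,k) = \beta(j,k)$ is handled symmetrically, invoking $\beta(i,k) \leq \beta(j,k) + 1$ from the second inequality of \eqref{eq:wtc}. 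In the remaining case both $w(j,i) = -1 - w(i,j)$ and $w(k,j) = -1 - w(j,k)$ are finite, so the cycle $k \to j \to i \to k$ has weight $w(k,j) + w(j,i) + w(i,k)$, which the $m$-acyclic property bounds above by $-1$ (trivial if $w(i,k) = -\infty$), and this rearranges exactly to the weak triangle inequality.

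For the reverse implication, I would construct two explicit valid $m$-acyclic digraphs $D_1$ and $D_2$ from which the two inequalities in \eqref{eq:wtc} follow upon applying the weak triangle inequality to the triple $(i,k,j)$. In $D_1$, I set $w(i,k) = \beta(i,k)$, $w(k,j) = 0$, $w(i,j) = \beta(i,j)$; force all remaining edges out of $i$ to have weight $\beta(i,\ell)$ (so every incoming edge to $i$ has weight $-\infty$); force all remaining edges out of $k$ to external vertices to have weight $\beta(k,\ell)$; and fill in the pairs among $\{j\} \cup (\{1,\ldots,n\} \setminus \{i,j,k\})$ using the natural DAG $w(\ell,m) = \beta(\ell,m)$, $w(m,\ell) = -\infty$ for $\ell < m$ numerically. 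The weak triangle inequality then yields $\beta(i,j) = w(i,j) \leq w(i,k) + w(k,j) + 1 = \beta(i,k) + 1$. The second inequality is obtained symmetrically from $D_2$, where $j$ plays the role of a sink: set $w(i,k) = 0$, $w(k,j) = \beta(k,j)$, $w(i,j) = \beta(i,j)$, make every edge into $j$ carry weight $\beta(\ell,j)$, and make every edge from $\{i,k\}$ to external vertices an incoming one at maximum weight (so outgoing edges from $i$ and $k$ to external vertices are $-\infty$).

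The main obstacle is verifying $m$-acyclicity of $D_1$ and $D_2$. In each construction the idea is that every directed cycle must traverse at least one edge of weight $-\infty$: cycles visiting the ``source'' vertex $i$ in $D_1$ (or the sink vertex $j$ in $D_2$) are killed by a $-\infty$ incoming (respectively outgoing) edge; cycles through $k$ that avoid the designated companion vertex must traverse a $-\infty$ edge leaving (respectively entering) $k$; cycles lying among the external vertices use a backward DAG arrow of weight $-\infty$; and the few short cycles entirely inside $\{i,j,k\}$ can be checked directly. The one genuinely finite cycle to watch is the two-cycle on $\{k,j\}$ in $D_1$ or on $\{i,k\}$ in $D_2$, whose total weight equals either $-1$ or $-\infty$ (depending on whether the corresponding $\beta$ equals zero), hence is always strictly negative as required.
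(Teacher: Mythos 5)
Your forward direction (from \eqref{eq:wtc} to the weak triangle inequality) is correct and is essentially the paper's argument: the generic case, where both reverse arrows exist, uses the $3$-cycle through $i,j,k$ together with integrality, and the boundary cases $w(i,j)=\beta(i,j)$ or $w(j,k)=\beta(j,k)$ use $w(i,k)\leq\beta(i,k)$ plus \eqref{eq:wtc} with the labels permuted. Your converse is also the same idea as the paper's (the paper phrases it contrapositively with a single witness digraph, you build two witnesses $D_1,D_2$ and apply the weak triangle inequality to them), but $D_1$ as you specify it is in general \emph{not} $m$-acyclic, so the weak triangle inequality hypothesis cannot be applied to it and the derivation of the first inequality of \eqref{eq:wtc} breaks down.

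Concretely: in $D_1$ you fill in the pairs inside $\{j\}\cup(\{1,\ldots,n\}\setminus\{i,j,k\})$ by the \emph{numerical} order, so every external vertex $\ell$ with $\ell<j$ carries an edge $\ell\rightarrow j$ of weight $\beta(\ell,j)$; you also keep the edges $k\rightarrow\ell$ of weight $\beta(k,\ell)$, and whenever $\beta(k,j)>0$ the choice $w(k,j)=0$ forces the reverse edge $j\rightarrow k$ of weight $-1$. Then $k\rightarrow\ell\rightarrow j\rightarrow k$ is a directed cycle of weight $\beta(k,\ell)+\beta(\ell,j)-1$, which is nonnegative as soon as one of these $\beta$'s is positive — already in the Shi arrangement $\mathcal{A}^{1,2}_{n-1}$ with $n\geq 4$ and $\ell<j$, where $\beta(\ell,j)=1$. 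This cycle passes through $k$ \emph{and} the companion vertex $j$, so it is covered by none of the cases in your taxonomy, and it refutes the claim that every cycle in $D_1$ uses a $-\infty$ edge. The repair is exactly what the paper does: choose the auxiliary linear order so that $i,j,k$ are its smallest elements, i.e.\ orient every edge between $\{i,j,k\}$ and an external vertex out of $\{i,j,k\}$ at maximal weight (and a DAG among the externals); then no cycle can leave $\{i,j,k\}$ and return, and the only directed cycle is the $2$-cycle on $\{k,j\}$ of weight $-1$ (or no cycle at all when $\beta(k,j)=0$). Your $D_2$ is sound once you also specify the external–external pairs (say by the same DAG rule), since there $j$ is a sink and $\{i,k\}$ has no outgoing edges to external vertices; with the $D_1$ fix, your two-witness argument becomes a correct, mildly rearranged version of the paper's proof.
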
  
\begin{proof}
Assume first that~\eqref{eq:wtc} is satisfied and let us compare
$w(i,k)$ with $w(i,j)+w(j,k)+1$, where we assume that the weights
$w(i,j)$, $(j,k)$  and $w(i,k)$ are nonnegative. If the reverse arrows
$i\leftarrow j$ (of weight $-1-w(i,j)$) and $j\leftarrow k$ (of weight
$-1-w(j,k)$) both exist, then the fact that the cycle $(i,k,j)$ is
$m$-acyclic implies
$$
-1-w(i,j)-1-w(j,k)+w(i,k)\leq -1,
$$
which is equivalent to the weak triangle inequality
$$
w(i,k)\leq w(i,j)+w(j,k)+1.
$$
We are left to consider the cases when at least one of the arrows
$i\leftarrow j$ and $j\leftarrow k$ does not exist. In this case either
$w(i,j)=\beta(i,j)$ or $w(j,k)=\beta(j,k)$. In either case the sum
$w(i,j)+w(j,k)+1$ is at least $\min(\beta(i,j),\beta(j,k))+1$, whereas
$w(i,k)$ is at most $\beta(i,k)$. The weak triangle inequality follows
from~\eqref{eq:wtc}.

Next we prove the contrapositive of the converse. Assume
that~\eqref{eq:wtc} fails for some $\{i,j,k\}$, without loss of
generality we may assume that $\beta(i,j)>\beta(i,k)+1$ holds. We
construct a valid associated weighted digraph violating the weak triangle
inequality as follows.We set $w(i,j)=\beta(i,j)$ (hence there is no
arrow $i\leftarrow j$) and $w(i,k)=\beta(i,k)$ (hence there is no
arrow $i\leftarrow k$). We set $w(j,k)=0$ and $w(k,j)=-1$. Note that
there is no $m$-ascending cycle on the restriction of our weighted digraph to
$\{i,j,k\}$ as the only cycle is $(j,k)$, which has weight $(-1)$. We
fix a linear order on $\{1,2,\ldots,n\}$ in such a way that $i$, $j$,
$k$ are the smallest vertices in this order. For any pair of vertices
$\{i',j'\}$ not
contained in $\{i,j,k\}$ we set $w(i',j')=\beta(i',j')$ where $i'$ is
the smaller vertex in our order. This choice guarantees that there is no
arrow $i'\leftarrow j'$, hence our weighted digraph does not contain any other
cycle than $(j,k)$. 
\end{proof}  

\begin{proposition}
\label{prop:trawt}  
Assume the truncated affine arrangement $\mathcal{A}^{ab}_{n-1}$
satisfies $1\leq a\leq b$ and $n\geq 3$. This integral and separated arrangement
satisfies the weak triangle inequality if and only of $b\leq a+1$ holds.
\end{proposition}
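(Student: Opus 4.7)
The plan is to invoke Theorem~\ref{thm:wtc}, which reduces the weak triangle inequality for any separated contiguous integral deformation to the combinatorial condition~\eqref{eq:wtc} on the boundary function $\beta$. For the truncated affine arrangement $\mathcal{A}^{ab}_{n-1}$, the function $\beta$ is extremely simple: $\beta(i,j)=b-1$ whenever $i<j$ and $\beta(i,j)=a-1$ whenever $i>j$. In particular $\beta$ takes only two distinct values, and these differ by exactly $b-a$.

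For the ``if'' direction, I would assume $b\leq a+1$, so that $b-a\in\{0,1\}$ since $a\leq b$. Then for any triple $\{i,j,k\}$ one has $|\beta(i,j)-\beta(i,k)|\leq b-a\leq 1$ and likewise $|\beta(i,j)-\beta(k,j)|\leq 1$, so both inequalities in~\eqref{eq:wtc} hold at once. Thus Theorem~\ref{thm:wtc} gives the weak triangle inequality.

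For the ``only if'' direction, I would prove the contrapositive. Assume $b\geq a+2$; I exhibit a triple on which~\eqref{eq:wtc} fails. The hypothesis $n\geq 3$ is exactly what is needed to make such a triple available: take $i=1$, $j=2$, $k=3$. Since $i<j$ we have $\beta(i,j)=b-1$, and since $k>j$ we have $\beta(k,j)=a-1$. The second inequality in~\eqref{eq:wtc} would then read $b-1\leq a$, which fails by the standing assumption $b\geq a+2$. By Theorem~\ref{thm:wtc} the weak triangle inequality must also fail.

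There is essentially no serious obstacle here; the work has already been done in Theorem~\ref{thm:wtc}, and the proof reduces to bookkeeping with the two possible values of $\beta(i,j)$. The only points requiring minor care are the directional nature of $\beta$ (the value depends on whether the first index is smaller or larger than the second) and the role of the hypothesis $n\geq 3$, which is needed only to guarantee the existence of the third index $k$ in the violating triple---when $n=2$ there are no triples at all and the condition~\eqref{eq:wtc} is vacuous. The hypothesis $1\leq a$ plays no direct role in this argument but is inherited from the separated assumption via Corollary~\ref{cor:trasep}.
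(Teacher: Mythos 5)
Your proposal is correct and follows essentially the same route as the paper: both directions reduce to checking condition~\eqref{eq:wtc} via Theorem~\ref{thm:wtc}, using that $\beta$ takes only the two values $b-1$ (for $i<j$) and $a-1$ (for $i>j$), with a concrete violating triple when $b\geq a+2$ (the paper uses $i=2$, $j=3$, $k=1$ against the first inequality of~\eqref{eq:wtc}, while you use $i=1$, $j=2$, $k=3$ against the second, which is an immaterial difference).
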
  
\begin{proof}
The arrangement is separated by Corollary~\ref{cor:trasep}. Assume first
that $b\leq a+1$ holds. For all $\{i,j,k\}$ the value of $\beta(i,j)$ is
at most $b-1$, whereas the value of $\beta(i,k)$ and of $\beta(k,j)$ is at
least $a-1$. The inequality~\eqref{eq:wtc} is a direct consequence of
$b\leq a+1$.

Conversely, if $a\leq b-2$ holds, consider $i=2$, $j=3$ and $k=1$.
Then $\beta(i,j)=\beta(2,3)=b-1$, $\beta(i,k)=\beta(2,1)=a-1$ and
$\beta(2,3)\leq \beta(2,1)+1$ fails as $b-1\not\leq (a-1)+1$. 
\end{proof}  

\begin{example}
\label{ex:negative}  
{\em Consider the truncated affine arrangement $\mathcal{A}^{13}_{2}$ and the
associated $m$-acyclic weighted digraph given by $w(1,2)=-1$,
$w(2,1)=0$, $w(1,3)=0$, 
$w(3,1)=-1$, $w(2,3)=2$ and $w(3,2)=-\infty$. The arrangement is
saturated, the total order 
of gains is $\sigma=213$. Not only the weak triangle inequality fails
because of $w(2,3)>w(2,1)+w(1,3)+1$, but the largest weight path form
$\sigma(1)=2$ to $\sigma(2)=1$ is $2\rightarrow 3\rightarrow 1$, which
contains an edge of negative weight.}     
\end{example}  
For truncated affine arrangements, a combinatorial approach to the case
$1\leq a\leq b\leq a+1$ is further facilitated by the following result. 
\begin{theorem}
\label{thm:a-bleq1}  
Consider a truncated affine arrangement $\mathcal{A}^{ab}_{n-1}$
satisfying $1\leq a\leq b\leq a+1$. A valid associated weighted digraph is
$m$-acyclic if and only if it contains no $m$-ascending cycle of length three.
\end{theorem}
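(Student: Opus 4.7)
The plan is to invoke Theorem~\ref{thm:4}, which (since $a,b\geq 1\geq 0$) guarantees that a valid associated weighted digraph is $m$-acyclic whenever it contains no $m$-ascending cycle of length at most four. Under the extra hypothesis $b\leq a+1$ I will show that the absence of $m$-ascending triangles already forces the absence of $m$-ascending 4-cycles (and since two-cycles are never $m$-ascending, this gives the theorem). Concretely, I will assume for contradiction that $C=(i_1,i_2,i_3,i_4)$ is a shortest $m$-ascending cycle in some valid weighted digraph and that $C$ has length four.

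By Lemma~\ref{lem:diagonals}, on each diagonal pair $\{i_1,i_3\}$ and $\{i_2,i_4\}$ exactly one of the two opposing arrows is present, and that arrow carries the maximal weight $\beta(\cdot,\cdot)$. After a cyclic rotation of the labels I may assume that $w(i_1,i_3)=\beta(i_1,i_3)$ (so $w(i_3,i_1)=-\infty$), and the second diagonal then splits the argument into Subcase~A with $w(i_2,i_4)=\beta(i_2,i_4)$ and Subcase~B with $w(i_4,i_2)=\beta(i_4,i_2)$. From the four edges of $C$ together with the two diagonal edges one builds in Subcase~A the two directed triangles $(i_1,i_3,i_4)$ and $(i_1,i_2,i_4)$, and in Subcase~B the two directed triangles $(i_1,i_3,i_4)$ and $(i_2,i_3,i_4)$; in either case the minimality of $C$ forces each such triangle weight to be at most $-1$.

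Substituting these two triangle inequalities into $w(C)\geq 0$ and bounding the two remaining edges by $w(i_r,i_s)\leq \beta(i_r,i_s)$ and $w(i_r,i_s)\geq -\beta(i_s,i_r)$, a short calculation yields in Subcase~A
\[
\beta(i_2,i_3)+\beta(i_1,i_4)\geq \beta(i_1,i_3)+\beta(i_2,i_4)+2,
\]
and in Subcase~B
\[
\beta(i_1,i_2)+\beta(i_4,i_3)\geq \beta(i_1,i_3)+\beta(i_4,i_2)+2.
\]

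In $\mathcal{A}^{a,b}_{n-1}$ each $\beta(i,j)$ equals $b-1$ if $i<j$ and $a-1$ if $i>j$, and $b-a\in\{0,1\}$. Hence the left-hand side of either displayed inequality is at most $2(b-1)$ while the right-hand side is at least $2(a-1)+2=2b$, which already settles the case $b=a$. When $b=a+1$, equality must hold throughout, forcing each $\beta$ on the left to equal $b-1$ and each on the right to equal $a-1$. Subcase~A then yields $i_2<i_3$, $i_1<i_4$, $i_1>i_3$, $i_2>i_4$, producing the impossible cyclic chain $i_1<i_4<i_2<i_3<i_1$, while Subcase~B yields $i_1<i_2$, $i_4<i_3$, $i_1>i_3$, $i_4>i_2$, giving the analogous impossible chain $i_1<i_2<i_4<i_3<i_1$. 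The main obstacle will be the algebraic manipulation that produces the two displayed inequalities cleanly in both subcases together with the cyclic-order bookkeeping in the $b=a+1$ subcase; once these are in place the contradiction is immediate.
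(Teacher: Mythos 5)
Your proposal is correct and follows essentially the same route as the paper: reduce to $4$-cycles via Theorem~\ref{thm:4}, use Lemma~\ref{lem:diagonals} to cut the quadrilateral along its diagonals into two triangles of weight at most $-1$, combine with $w(C)\geq 0$, and compare $\beta$-values, with the borderline case $b=a+1$ forcing equalities and an impossible cyclic chain of strict inequalities among the labels (the paper avoids your two subcases by rotating so that both diagonals point ``forward,'' but this is cosmetic). The only slip is arithmetic: $2(a-1)+2=2a$, not $2b$; the intended reading ($b=a$ gives an immediate contradiction since $2a>2(b-1)$, while $b=a+1$ gives $2a=2(b-1)$ and hence equality throughout) is exactly what your subsequent analysis uses, so the argument stands.
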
  
\begin{proof}
  As a consequence of Theorem~\ref{thm:4}, it suffices
  to show that there is no $ab$-weighted digraph in which a shortest
  $m$-ascending cycle of length $4$ would exist. 
Assume by contradiction that $(i_0,i_1,i_2,i_3)$ is such a
cycle. By Lemma~\ref{lem:diagonals}, for each diagonal pair
$\{i_r,i_s\}$, only one of the directed edges $i_r\rightarrow i_s$ and
$i_{r}\leftarrow i_{s}$ is present. After cyclic rotation of the labels,
if necessary, we may assume that $i_0\rightarrow i_2$ and $i_1\rightarrow
i_3$ are the directed edges present in our weighted digraph. By our
assumption, the directed cycles $(i_0,i_2,i_3)$ and $(i_1,i_3,i_0)$ are
not $m$-acyclic, we have
$$
w(i_0,i_2,i_3)\leq -1 \quad\mbox{and}\quad w(i_1,i_3,i_0)\leq -1.
$$
In analogy to the derivation of~(\ref{eq:short}), adding $w(i_1,i_2)-w(i_3,i_0)$
on both sides of the sum of the two inequalities yields
$$
w(i_0,i_2)+w(i_1,i_3)+w(i_0,i_1,i_2,i_3)\leq -2+w(i_1,i_2)-w(i_3,i_0).
$$
Since $(i_0,i_1,i_2,i_3)$ is $m$-ascending, the last inequality implies  
$$
w(i_0,i_2)+w(i_1,i_3)\leq -2+w(i_1,i_2)-w(i_3,i_0).
$$
The left hand side is at least $2(a-1)$, the right hand side is at most
$-2+(b-1)+(b-1)$. Hence we obtain
$$
2(a-1)\leq -2+(b-1)+(b-1)
$$
which is equivalent to $a\leq b-1$. Together with $a\geq b-1$ we obtain
$a=b-1$ and all inequalities used must be equalities. But this implies
$i_0>i_2>i_1>i_3>i_0$, a contradiction. 
\end{proof}
  
Theorem~\ref{thm:a-bleq1} may be extended to contiguous integral
deformations of the braid arrangement using the following auxiliary
terminology.

\begin{definition}
A separated contiguous integral deformation of the braid arrangement in
$V_{n-1}$ satisfying the 
weak triangle inequality has {\em a crossing $\beta$-pattern} if there is a
$4$-element subset $\{i_1,i_2,j_1,j_2\}$ of $\{1,2,\ldots,n\}$ such that
\begin{equation}
\label{eq:crossingp}
\beta(i_1,j_1)<\beta(i_1,j_2)\quad\mbox{and}\quad \beta(i_2,j_1)>\beta(i_2,j_2)
\quad\mbox{hold}.   
\end{equation}
\end{definition}

\begin{theorem}
Let $\mathcal{A}$ be a  separated  contiguous integral deformation of
the braid arrangement in
$V_{n-1}$ satisfying the weak triangle inequality. Then $\mathcal{A}$
has a valid associated weighted digraph containing a minimal
$m$-ascending cycle of length $4$, if and only if $\mathcal{A}$ has a
crossing $\beta$-pattern. 
\end{theorem}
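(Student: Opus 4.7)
For the only-if direction I would mimic the argument of Theorem~\ref{thm:a-bleq1} almost verbatim. Assume $D$ is a valid $m$-acyclic weighted digraph containing a minimal $m$-ascending cycle $C=(i_0,i_1,i_2,i_3)$ of length four. Lemma~\ref{lem:diagonals} forces each diagonal of $C$ to carry an edge in only one direction, and after cyclically rotating the labels I may assume that $i_0\rightarrow i_2$ and $i_1\rightarrow i_3$ are present, so that $w(i_0,i_2)=\beta(i_0,i_2)$ and $w(i_1,i_3)=\beta(i_1,i_3)$. The manipulation used in the proof of Theorem~\ref{thm:a-bleq1}---adding the two triangle bounds $w(i_0,i_2,i_3)\le-1$ and $w(i_1,i_3,i_0)\le-1$, then $w(i_1,i_2)-w(i_3,i_0)$ to both sides, and using $w(C)\ge 0$---yields
\[
\beta(i_0,i_2)+\beta(i_1,i_3)+2\le w(i_1,i_2)-w(i_3,i_0)\le\beta(i_1,i_2)+\beta(i_0,i_3).
\]
By Theorem~\ref{thm:wtc} each of $\beta(i_1,i_2)-\beta(i_1,i_3)$ and $\beta(i_0,i_3)-\beta(i_0,i_2)$ lies in $[-1,1]$, so both must equal $+1$, and $(i_0,i_1,i_2,i_3)$ serves as the quadruple of a crossing $\beta$-pattern.

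For the converse I would exhibit the required minimal $m$-ascending $4$-cycle explicitly. Given a crossing $\beta$-pattern on $\{i_1,i_2,j_1,j_2\}$, I would first apply the weak triangle inequality to every triple inside this subset to pin down the six relevant $\beta$-values: writing $k_0:=\beta(i_1,j_1)$, one finds $\beta(i_1,j_2)=\beta(i_2,j_1)=k_0+1$, $\beta(i_2,j_2)=k_0$, and $\beta(i_1,i_2),\beta(j_1,j_2)\in\{k_0,k_0+1\}$. Separatedness together with $\beta(i_1,j_2)\ge 1$ further gives $n_{i_1,j_2}\ge 2$, which is precisely what allows a middle-range weight on that pair.

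I would then build the digraph so that $i_1\to i_2\to j_1\to j_2\to i_1$ becomes a minimal $m$-ascending cycle: orient both diagonals toward their $j$-vertex with boundary weights $w(i_1,j_1)=w(i_2,j_2)=k_0$; let the side $i_2\to j_1$ carry its boundary weight $k_0+1$; on the pair $\{i_1,j_2\}$ use the lowest middle weight $w(j_2,i_1)=-k_0-1$ paired with $w(i_1,j_2)=k_0$; and set $w(i_1,i_2)=w(j_1,j_2)=0$, a middle value when the corresponding $\beta$ equals $k_0+1$ and a boundary when $\beta=k_0=0$. Beyond the four-element subset, fix any linear order placing those four vertices first and on every remaining pair take $w(u,v)=\beta(u,v)$ in the forward direction and $-\infty$ backward, so that no external vertex participates in any cycle. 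The main obstacle is verifying that this prescription does yield a valid weighted digraph whose designated $4$-cycle is a minimal $m$-ascending cycle: a direct computation shows the cycle has total weight $0+(k_0+1)+0+(-k_0-1)=0$, every $2$-cycle weighs $-1$ or $-\infty$, and the only triangles on $\{i_1,i_2,j_1,j_2\}$ avoiding a $-\infty$ edge are $i_1\to j_1\to j_2\to i_1$ and $i_1\to i_2\to j_2\to i_1$, each of weight exactly $-1$. Once the $\beta$-relations pinned down above are in hand, these checks reduce to routine arithmetic.
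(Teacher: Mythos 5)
Your proposal is correct and follows essentially the same route as the paper: the only-if direction repeats the paper's adaptation of the argument for Theorem~\ref{thm:a-bleq1} (Lemma~\ref{lem:diagonals}, the two triangle bounds, and Theorem~\ref{thm:wtc} forcing both $\beta$-differences to equal $+1$), and your explicit digraph for the converse is exactly the paper's construction in Figure~\ref{fig:crossingbeta} (with $k_0$ playing the role of $\beta$), including the same boundary/middle weight choices, the same two weight $-1$ triangles, and the same linear-order trick to keep external vertices out of all cycles. The only cosmetic slip is your parenthetical classification of when $w(i_1,i_2)=0$ is a ``middle'' value (it is middle whenever $\beta(i_1,i_2)>0$, not only when $\beta(i_1,i_2)=k_0+1$), which does not affect the verification.
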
  
\begin{proof}
Assume first that $\mathcal{A}$ has a crossing $\beta$-pattern, that is,
a set $\{i_1,i_2,j_2,j_3\}$ satisfying~\eqref{eq:crossingp}. Introducing
$\beta=\beta(i_1,j_1)$, as a consequence of Theorem~\ref{thm:wtc}
and~\eqref{eq:crossingp} we obtain $\beta(i_1,j_2)=\beta+1$. Similarly,
introducing  $\beta'=\beta(i_2,j_1)$, Theorem~\ref{thm:wtc}
and~\eqref{eq:crossingp} yield
$\beta(i_1,j_2)=\beta'+1$. Theorem~\ref{thm:wtc} also implies
$\beta(i_1,j_2)-\beta(i_2,j_2)\leq 1$, that is, $\beta+1-\beta'\leq 1$      
and $\beta(i_2,j_1)-\beta(i_1,j_1)\leq 1$, that is,
$\beta'+1-\beta\leq 1$. We obtain that $\beta-\beta'$ and $\beta'-\beta$
are both at most zero, hence $\beta$ must equal $\beta'$.
\begin{figure}[h]
\centering
\input{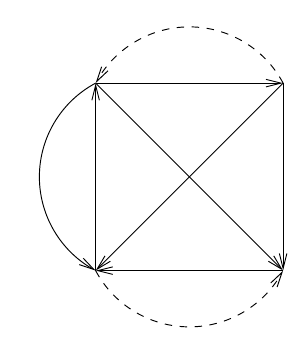_t}
\caption{A valid weighted digraph with a minimal $m$-ascending $4$-cycle}
\label{fig:crossingbeta}
\end{figure}  
Consider the weighted digraph shown in
Figure~\ref{fig:crossingbeta}. The weights $w(i_1,j_1)=\beta(i_1,j_1)$,
$w(i_2,j_2)=\beta(i_2,j_2)$ and $w(i_2,j_1)=\beta(i_2,j_1)$ are largest
possible, hence there is no arrow $i_1\leftarrow j_1$, $i_2\leftarrow
j_2$ or $i_2\leftarrow j_1$. The weight $w(i_1,j_1)=\beta(i_1,j_1)-1$ is
less than $\beta(i_1,j_1$ hence there is also an arrow $i_1\leftarrow
j_1$ of weight $(j_1,i_1)=-\beta-1$. The choices $w(i_1,i_2)=0$ and
$w(j_1,i_2)=0$ are valid since the arrangement is separated, 
the (dashed) reverse arrows $i_1\rightarrow i_2$, respectively
$j_1\rightarrow j_2$ are only present if $\beta(i_1,i_2)>0$, respectively
$\beta(j_1,j_2)>0$ hold. The weight of each directed cycle of length $2$
is $(-1)$ (as it should), and the same holds for the $3$-cycles
$(i_1,j_1,j_2)$ and $(i_1,i_2,j_2)$. The $4$-cycle $(i_1,i_2,j_1,j_2)$
has weight zero. Just like in the proof of Theorem~\ref{thm:wtc}, we may
select a linear order on $\{1,2,\ldots,n\}$ in such a way that
$i_1,i_2,j_1,j_2$ are the least elements, and for any pair of
vertices $\{i',j'\}$ not contained in $\{i_1,i_2,j_1,j_2\}$ we set
$w(i',j')=\beta(i',j')$ where $i'$ is the smaller vertex in our order. 
The resulting weighted digraph is valid, and its only $m$-ascending
cycle is $(i_1,i_2,j_1,j_2)$. 

Assume next that a valid weighted digraph associated to $\mathcal{A}$
has a minimal $m$-ascending cycle $(i_0,i_1,i_2,i_3)$ of length four. We will 
adapt the proof of Theorem~\ref{thm:a-bleq1} to tackle this case. As
before, we may use 
Lemma~\ref{lem:diagonals}, and after a cyclic rotation we may assume
that $i_0\rightarrow i_2$ and $i_1\rightarrow 
i_3$ are the diagonals of finite weight present. Using the 
fact that the directed cycles $(i_0,i_2,i_3)$ and $(i_1,i_3,i_0)$ are
not $m$-acyclic, the same calculation yields
$$
w(i_0,i_2)+w(i_1,i_3)\leq -2+w(i_1,i_2)-w(i_3,i_0).
$$
The left hand side is exactly $\beta(i_0,i_2)+\beta(i_1,i_3)$, as there
is no arrow $i_0\rightarrow i_2$ or $i_1\rightarrow i_3$. The
right hand side is at most 
$-2+\beta(i_1,i_2)-\alpha(i_3,i_0)=-2+\beta(i_1,i_2)+\beta(i_0,i_3)$. We
obtain the inequality
\begin{equation}
\label{eq:betaineq}  
\beta(i_0,i_2)+\beta(i_1,i_3)\leq -2+\beta(i_1,i_2)+\beta(i_0,i_3).
\end{equation}
By Theorem~\ref{thm:wtc}, we have $\beta(i_1,i_2)\leq \beta(i_1,i_3)+1$
and $\beta(i_0,i_3)\leq \beta(i_0,i_2)+1$. Using these as upper bounds
for the terms on the right hand side of~\eqref{eq:betaineq} we obtain
$$
\beta(i_0,i_2)+\beta(i_1,i_3)\leq -2+\beta(i_1,i_2)+\beta(i_0,i_3)\leq
\beta(i_0,i_2)+\beta(i_1,i_3). 
$$
All inequalities must be equalities, in particular we get
$\beta(i_1,i_2)=\beta(i_1,i_3)+1$ and
$\beta(i_0,i_3)=\beta(i_0,i_2)+1$. As a consequence 
$\beta(i_1,i_2)>\beta(i_1,i_3)$ and $\beta(i_0,i_3)<\beta(i_0,i_2)$
hold, that is, we have a crossing $\beta$-pattern. 
\end{proof}

\section{Extended Shi arrangements}
\label{sec:eShi}

In this section we show how labeled weighted digraphs may be used to fill in
the omitted details in Stanley's proof of the injectivity of the {\em
  Pak-Stanley labeling} of the regions of the extended Shi arrangement
in~\cite[2.1 Theorem]{Stanley-tinv}. We also revisit and generalize the
Athanasiadis-Linusson labeling~\cite{Athanasiadis-Linusson} of its
regions. A connection between this labeling and our weighted digraph
model will be indicated in Section~\ref{sec:ceiling}.  

By Corollary~\ref{cor:trasep} the extended Shi arrangement is separated:
for each region there is a unique permutation
$\sigma(1)\sigma(2)\ldots\sigma(n)$ such 
that $w(\sigma(i),\sigma(j))\geq 0$ holds for all $i<j$. Equivalently,
every region that is represented by a weighted digraph whose total order of
gains is $\sigma$ is a subset of the cone
$x_{\sigma(1)}>x_{\sigma(2)}>\cdots>x_{\sigma(n)}$. For
$i<_{\sigma^{-1}}j$, there is always a 
directed edge $i\rightarrow j$, and there is no arrow
$i\leftarrow j$ exactly when $w(i,j)$ is as large as possible, that is,
$w(i,j)=\beta(i,j)$ where
$$
\beta(i,j)=
\begin{cases}
  a, &\mbox{when $i<j$};\\
  a-1, &\mbox{when $i>j$}.\\
\end{cases}  
$$
By Theorem~\ref{thm:a-bleq1} we may verify
the $m$-acyclic condition in a valid associated weighted digraph by
only checking the directed cycles of length three.  Consider a set
$\{i,j,k\}\subseteq \{1,2,\ldots,n\}$, without loss of generality we may assume
$i<_{\sigma^{-1}}j<_{\sigma^{-1}}k$. There is no $m$-ascending cycle
$(i,j,k)$ if and only if either there is no arrow $i\leftarrow k$ or
$w(i,j)+w(j,k)-1-w(i,k)\leq -1$ holds. This condition may be
equivalently rewritten as
\begin{equation}
  \label{eq:wiklb}
w(i,k)\geq \min(\beta(i,j),w(i,j)+w(j,k))\quad\mbox{for
  $i<_{\sigma^{-1}}j<_{\sigma^{-1}}k$}.   
\end{equation}  
There is no $m$-ascending cycle $(i,k,j)$ exactly when either one of the
arrows $k\rightarrow j$, $j\rightarrow i$ is missing, or we have
$w(i,k)-1-w(i,j)-1-w(j,k)\leq -1$. The last inequality is equivalent
to
\begin{equation}
  \label{eq:wikub}
w(i,k)\leq w(i,j)+w(j,k)+1\quad\mbox{for
  $i<_{\sigma^{-1}}j<_{\sigma^{-1}}k$}.   
\end{equation}  
Note that \eqref{eq:wikub} also holds when one of the
arrows $k\rightarrow j$, $j\rightarrow i$ is missing, as $\beta(i,k)\leq
\beta(i,j)+1$ and $\beta(i,k)\leq \beta(j,k)+1$ always hold, regardless
of the relative order of the numbers $i$, $j$ and $k$. To summarize,
given a valid associated digraph satisfying
$w(i,j)\geq 0$ for all $i<_{\sigma^{-1}}j$, the weighted digraph is
$m$-acyclic if and only if \eqref{eq:wiklb} and \eqref{eq:wikub} hold.

Recall that the {\em Pak-Stanley labeling} associates to each region $R$
a vector $\lambda(R)=(f(1),f(2),\ldots,f(n))$ in the following way:
\begin{enumerate}
\item The label of the central region $x_1>x_2>\cdots>x_n-1$ is
  $(0,0,\ldots,0)$.
\item Suppose $R$ is an already labeled region, and the region $R'$
  separated from $R$ by the unique hyperplane $x_i-x_j=m$ for some
  $i<j$. Then $\lambda(R')=\lambda(R)+e_j$ if $m\leq 0$ and
  $\lambda(R')=\lambda(R)+e_i$ if $m>0$. Here $e_1,\ldots,e_n$ are the
  standard basis vectors of $\mathbb{R}^n$.   
\end{enumerate}  
In~\cite{Stanley-tinv} Stanley gives a detailed equivalent definition of
the function $f(i)$ in the case when $a=1$. We now extend this
equivalent definition to all extended Shi arrangements.
\begin{definition}
\label{def:SPl}  
Consider a valid $m$-acyclic associated weighted digraph of
$\mathcal{A}^{a,a+1}_{n-1}$ with weight function $w$. We define the {\em
  Pak-Stanley label $(f(1),\ldots,f(n))$} of the corresponding region as
$$
f(i)=\sum_{i<_{\sigma^{-1}} j} w(i,j)+|\{(i,j)\::\: i<_{\sigma^{-1}} j
\mbox{ and } i>j\}|.
$$
\end{definition}  
We let the reader verify that Definition~\ref{def:SPl} is equivalent to
the usual definition of the Pak-Stanley labeling given in the literature. For
example, in the case when $i<j$ and $0\leq w\leq a-1$ hold, 
$w(i,j)=w$ is equivalent to stating $w<x_i-x_j<w+1$, hence $w(i,j)$ is
the number of hyperplanes of the form $x_i-x_j=m$ crossed while reaching
our region from the central region.
Following~\cite{Stanley-tinv} we call the sum $\sum_{i<_{\sigma^{-1}} j}
w(i,j)$ the number of {\em separations}, whereas $|\{(i,j)\::\:
i<_{\sigma^{-1}} j \mbox{ and } i>j\}|$ is the number of {\em
  inversions}. We do not need to prove the equivalence of the above
definitions, as we may easily show Lemma~\ref{lem:aparking} below
directly. Recall that an {\em $a$-parking function} is a sequence 
$(f(1),\ldots,f(n))\in \mathbb{N}^n$ whose monotonic rearrangement
$\tilde{f}(1)\leq \tilde{f}(2)\leq \cdots \leq \tilde{f}(n)$ satisfies
$0\leq \tilde{f}(i)\leq a(i-1)$. 
\begin{lemma}
\label{lem:aparking}  
The Pak-Stanley labels $(f(1),\ldots,f(n))$ are $a$-parking functions.
\end{lemma}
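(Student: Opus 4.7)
The plan is to show that if $\sigma(k)=i$ then $f(i)\leq a(n-k)$; the $a$-parking function property will then fall out automatically from ordering.

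First I would establish a uniform per-summand bound. Whenever $i<_{\sigma^{-1}}j$, the defining property of the order of gains gives $w(i,j)\geq 0$, and by Proposition~\ref{prop:validt} together with the formulas for $\beta$ recorded just before the lemma, we have $w(i,j)\leq \beta(i,j)$, which equals $a$ when $i<j$ and $a-1$ when $i>j$. Splitting the defining sum for $f(i)$ according to whether $i<j$ or $i>j$, each index $j$ with $i<_{\sigma^{-1}}j$ contributes at most $a$ to the combined expression: in the former case simply $w(i,j)\leq a$, and in the latter $w(i,j)+1\leq(a-1)+1=a$, the extra $1$ being the inversion adjustment. Writing $i=\sigma(k)$, exactly $n-k$ indices $j$ satisfy $i<_{\sigma^{-1}}j$, so $0\leq f(\sigma(k))\leq a(n-k)$.

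The parking function property is then immediate. For each $r\in\{0,1,\ldots,n-1\}$, the $r+1$ values $f(\sigma(n-r)),f(\sigma(n-r+1)),\ldots,f(\sigma(n))$ are all bounded by $ar$, so the monotonic rearrangement $\tilde{f}(1)\leq\cdots\leq\tilde{f}(n)$ of $(f(1),\ldots,f(n))$ satisfies $\tilde{f}(r+1)\leq ar$, which is precisely the defining condition of an $a$-parking function.

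There is no real obstacle in this argument; the content is the observation that the two cases $i<j$ and $i>j$ align perfectly, because the inversion adjustment $+1$ exactly compensates for the smaller upper bound $a-1$ when $i>j$. This compensation is what makes the per-summand bound uniformly equal to $a$, and it is the only place where the specific geometry of the extended Shi arrangement (as opposed to an arbitrary separated integral deformation) enters.
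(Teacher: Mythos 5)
Your proof is correct and follows essentially the same route as the paper: bound each summand by $a$ (using $w(i,j)\leq a$ for non-inversions and $w(i,j)+1\leq (a-1)+1=a$ for inversions), so $f(\sigma(k))\leq a(n-k)$, and the $a$-parking condition follows from the rearrangement. The only difference is that you spell out the final rearrangement step, which the paper leaves implicit.
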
  
\begin{proof}
  This is a direct consequence of the fact that, for each $i<_{\sigma^{-1}}
j$, the number $w(i,j)$ is at most $a$ if $(i,j)$ is not an inversion
and at most $a-1$ if $(i,j)$ is an inversion. Hence the total number of
inversions and separations contributing to $f(i)$ is at most $a$ times
the number of labels $j$ succeeding $i$ in $\sigma$.
\end{proof}

Next we rephrase Stanley's key lemma~\cite[p.\ 363]{Stanley-tinv} to
match our terminology.
\begin{lemma}
\label{lem:pfkey}
Given $i<_{\sigma^{-1}} j$, if $i>j$ or $w(i,j)>0$ holds then we
have $f(i)>f(j)$.
\end{lemma}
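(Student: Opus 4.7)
The plan is to rewrite the Pak--Stanley label using the \emph{shifted weight} $\tilde{w}(i,k):=w(i,k)+[i>k]$, where $[i>k]$ equals $1$ if $i>k$ as integers and $0$ otherwise, defined whenever $i<_{\sigma^{-1}}k$. This repackaging folds the inversion count into the separation sum, so that $f(i)=\sum_{i<_{\sigma^{-1}}k}\tilde{w}(i,k)$. The hypothesis of the lemma translates precisely to the inequality $\tilde{w}(i,j)\geq 1$: if $i>j$ then the indicator contributes $1$ already, while if $i<j$ the hypothesis forces $w(i,j)\geq 1$. Note also the universal bounds $\tilde{w}(i,k)\geq 0$ and $\tilde{w}(i,k)\leq a$ in either parity of $i,k$.

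Next I would partition the summation set $\{k:i<_{\sigma^{-1}}k\}$ into three pieces: the singleton $\{j\}$, the middle block $B=\{k:i<_{\sigma^{-1}}k<_{\sigma^{-1}}j\}$, and the tail $A_j=\{k:j<_{\sigma^{-1}}k\}$. Since $A_j$ is exactly the set over which $f(j)$ is summed, subtracting yields
\begin{equation*}
f(i)-f(j)=\tilde{w}(i,j)+\sum_{k\in B}\tilde{w}(i,k)+\sum_{k\in A_j}\bigl(\tilde{w}(i,k)-\tilde{w}(j,k)\bigr).
\end{equation*}
The middle sum consists of nonnegative terms and $\tilde{w}(i,j)\geq 1$ by hypothesis, so everything reduces to proving the pointwise comparison $\tilde{w}(i,k)\geq\tilde{w}(j,k)$ for every $k\in A_j$.

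For this pointwise inequality I would invoke the $m$-acyclicity constraint \eqref{eq:wiklb} applied to the chain $i<_{\sigma^{-1}}j<_{\sigma^{-1}}k$, which gives $w(i,k)\geq\min(\beta(i,k),\,w(i,j)+w(j,k))$, and then split into subcases indexed by the relative integer order of $\{i,j,k\}$. In five of the six subcases the conclusion falls out directly from $w(i,j)\geq 0$ together with the ceiling $\tilde{w}(j,k)\leq a$, which turns the minimum into $\tilde{w}(j,k)$. The main obstacle is the remaining subcase $i<k<j$ (as integers), in which $[j>k]-[i>k]=1$ creates a one-unit deficit in the shifted weights; this is precisely the case where one must cash in the hypothesis, using $\tilde{w}(i,j)=w(i,j)\geq 1$ to absorb the discrepancy inside the minimum. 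Combining this pointwise estimate with the partition identity gives $f(i)-f(j)\geq\tilde{w}(i,j)\geq 1$, completing the proof.
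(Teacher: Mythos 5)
Your proof is correct and is essentially the paper's argument with tidier bookkeeping: both compare the contributions to $f(i)$ and $f(j)$ term by term using \eqref{eq:wiklb}, and your shifted weight $\tilde{w}(i,k)=w(i,k)+[i>k]$ simply folds the paper's matching of inversions into the separation comparison, with the same two exceptional integer orders (the deficit at $j<k<i$ absorbed by the extra inversion, and $i<k<j$ being the one place the hypothesis is cashed in). The unified treatment of the two hypothesis branches and the explicit decomposition of $f(i)-f(j)$ is a pleasant streamlining, not a different method.
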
  
\begin{proof}
Assume first $i>j$ holds, i.e., $(i,j)$ is an inversion. Then for each
inversion $(j,k)$ the pair $(i,k)$ is also inversion, hence we may
identify the set of all inversions of $j$ with those inversions $(i,k)$
of $i$ which satisfy the stronger inequality $j>k$. Regarding
separations, note that~\eqref{eq:wiklb} implies $w(i,k)\geq w(j,k)$ in
almost all cases: either we have $w(i,k)\geq w(i,j)+w(j,k)\geq w(j,k)$;
or we have $w(i,k)\geq \beta(i,j)$ forcing $w(i,k)\geq a-1$ and
$w(j,k)\leq a$. The inequality $w(i,k)< w(j,k)$ can only hold when we have
$j<k<i$, $w(j,k)=a$ and $w(i,k)=a-1$. In this exceptional case $w(i,k)$
contributes one less to the separations of $i$ than $w(j,k)$ to the
separations of $j$, but this lag is offset by the presence of the
inversion $(i,k)$ which has no corresponding inversion $(j,k)$. Hence we
have $f(i)\geq f(j)$, and the presence of the additional inversion
$(i,j)$ (contributing to $f(i)$ only) makes the inequality strict. 

Assume next $w(i,j)>0$ holds. Once again, we may use~\eqref{eq:wiklb} to
state $w(i,k)\geq w(j,k)$ almost always. In the 
exceptional case $j<k<i$, $w(j,k)=a$ and $w(i,k)=a-1$ must hold, and
$(i,k)$ is an additional inversion. Regarding an inversion $(j,k)$, 
we may match it to $(i,k)$ if $(i,k)$ is an inversion, the unmatched
inversions $(j,k)$ satisfy $j<k<i$. For these~\eqref{eq:wiklb} implies
$$
w(i,k)\geq \min(a,w(i,j)+w(j,k))>w(j,k), 
$$
as $w(j,k)\leq a-1$ and $w(i,j)>0$ hold, hence $w(i,k)$ contributes more 
separations to $f(i)$ than $w(j,k)$ to $f(j)$. Therefore we have
$f(i)\geq f(j)$, and the presence of the additional separations
contributed by $w(i,j)>0$ to $f(i)$ makes the inequality strict.
\end{proof}  

Now we are ready to fill in the omitted details in the proof
of~\cite[2.1 Theorem]{Stanley-tinv}.

\begin{theorem}[Stanley]
The labels of the regions of the extended Shi arrangement are the
$a$-parking functions of length $n$, each occurring exactly once.
\end{theorem}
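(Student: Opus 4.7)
The plan is to establish the theorem via injectivity together with a cardinality count. By Lemma~\ref{lem:aparking} the Pak-Stanley map sends every region to an $a$-parking function of length $n$. The extended Shi arrangement $\mathcal{A}^{a,a+1}_{n-1}$ has exactly $(an+1)^{n-1}$ regions (computed in~\cite{Postnikov-Stanley}), matching the number of $a$-parking functions, so it suffices to show that the labeling is injective. This is precisely the step that Stanley sketches without full detail, and I would carry it out by reconstructing the unique valid $m$-acyclic weighted digraph from its Pak-Stanley label.

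The reconstruction proceeds in two stages. First, recover the order of gains $\sigma$ from right to left. The last entry $\sigma(n)$ is characterised by $f(\sigma(n))=0$ (trivially, since no coordinate succeeds it in $\sigma$), and is the \emph{largest} index with this property: for $i>\sigma(n)$ one has $i<_{\sigma^{-1}}\sigma(n)$ together with $i>\sigma(n)$, so Lemma~\ref{lem:pfkey} forces $f(i)>f(\sigma(n))=0$. Hence $\sigma(n)$ is uniquely identified by the label. After deleting $\sigma(n)$ from the vertex set and adjusting the remaining $f$-values by subtracting the contributions of $\sigma(n)$ (an inversion indicator plus the single edge weight $w(i,\sigma(n))$), the restricted structure is a valid $m$-acyclic weighted digraph for the extended Shi arrangement on the remaining $n-1$ coordinates. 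Iterating the rule pins down $\sigma(n-1),\sigma(n-2),\ldots$ one at a time.

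Once $\sigma$ is in hand, the individual weights $w(i,j)$ with $i<_{\sigma^{-1}}j$ are forced by processing $i$ in reverse order along $\sigma$. At each step the total $\sum_{j:\,i<_{\sigma^{-1}}j} w(i,j)$ equals $f(i)$ minus the now-known inversion count $|\{j: i<_{\sigma^{-1}}j,\ i>j\}|$, and combining this with the bounds $0\leq w(i,j)\leq \beta(i,j)$ of Proposition~\ref{prop:validt} and the $m$-acyclicity inequalities~\eqref{eq:wiklb}--\eqref{eq:wikub} pins down each individual weight greedily, for instance by choosing $w(i,\sigma(k))$ in decreasing order of $k$. By Corollary~\ref{cor:bijection} the resulting weighted digraph corresponds to a unique region, so $\lambda$ is injective.

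The main obstacle is the inductive recovery of $\sigma$: after removing $\sigma(n)$ one has to show that the remaining Pak-Stanley data, suitably corrected, still obeys the same rule ``largest index with adjusted label $0$'' for producing $\sigma(n-1)$. The corrections depend on the yet-unknown weights $w(i,\sigma(n))$ for each surviving $i$, and one must argue that these weights can be read off from the $a$-parking inequalities $\tilde f(k)\leq a(k-1)$ together with Lemma~\ref{lem:pfkey}. I expect this to be the delicate step filled in using the weighted-digraph framework, after which injectivity, and hence the theorem, follow from the cardinality match.
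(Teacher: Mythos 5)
Your overall frame -- the labels are $a$-parking functions (Lemma~\ref{lem:aparking}), the region count from~\cite{Postnikov-Stanley} matches the number of $a$-parking functions, so only injectivity needs proof -- is exactly the paper's. The problem is that the injectivity argument itself is not carried out, and the part you leave open is precisely the hard part. Your right-to-left recovery of $\sigma$ correctly identifies $\sigma(n)$ as the largest index with $f$-value $0$ (via Lemma~\ref{lem:pfkey}), but the inductive step then requires subtracting from each surviving $f(i)$ the contribution of $\sigma(n)$, namely $w(i,\sigma(n))$ plus an inversion indicator; these weights are exactly the unknowns you are trying to reconstruct, and you give no procedure for determining them at that stage. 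You acknowledge this circularity (``the corrections depend on the yet-unknown weights\dots I expect this to be the delicate step'') but do not resolve it, so the proposal stops short of a proof. The paper sidesteps this by running the recursion in the opposite direction: labels are \emph{inserted} into $\sigma$ in increasing order of $f$-value (ties broken by decreasing numerical value), and Lemma~\ref{lem:pfkey} then forces $w(i',i)=0$ and $i'<i$ for every already-inserted $i'$ preceding the new label $i$, so a newly inserted label contributes nothing to the labels of previously inserted ones and no corrections are ever needed; the remaining work is to show the insertion position and the weight extension are forced.

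The second stage of your reconstruction has a gap of the same kind. Knowing $f(i)$ minus the inversion count gives only the \emph{sum} $\sum_{i<_{\sigma^{-1}}j} w(i,j)$, and it is not true on its face that this sum, together with the bounds of Proposition~\ref{prop:validt} and the inequalities~\eqref{eq:wiklb}--\eqref{eq:wikub}, ``pins down each individual weight greedily'': a priori many weight vectors share the same sum, and a greedy choice could produce a valid $m$-acyclic digraph different from the one labeling the region. What actually makes ``sum determines the weights'' true is the monotonicity argument in the paper: if two admissible extensions $w_1,w_2$ first differ (in the order of $\sigma$) at $j$ with $w_1(i,j)<w_2(i,j)$, then \eqref{eq:wiklb} and \eqref{eq:wikub} force $w_2(i,k)\geq w_1(i,k)$ for every later $k$, so the separation totals, hence the values $f(i)$, differ; a parallel comparison rules out two different insertion positions. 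Supplying these two comparison arguments (or an equivalent) is what your proposal is missing; without them the injectivity claim, and hence the theorem, is not established.
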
  
\begin{proof}
The fact that the labels are $a$-parking functions have been shown in
Lemma~\ref{lem:aparking} above. As in the proof of~\cite[2.1
  Theorem]{Stanley-tinv}, we will only show the injectivity of the
labeling, and then we will rely on existing results in the literature to
confirm that the number of regions equals the number of $a$-parking
functions.

Given an $a$-parking function $(f(1),\ldots,f(n))$, we insert the labels
$i$ into $\sigma$ one by one and show the uniqueness of the place and of the
function values $w(i,j)$ one step at a time. As in the proof of~\cite[2.1
  Theorem]{Stanley-tinv}, we insert the labels $i$ in increasing order
of the value $f(i)$, and we insert labels with the same $f$-value
in decreasing order of their numerical value. Suppose $i$
is the most recently inserted label. If this label is preceded by any
label $i'$ in $\sigma$, then by Lemma~\ref{lem:pfkey} we must have
$w(i',i)=0$ and $i'<i$, otherwise we get $f(i')>f(i)$ in contradiction
with our defined order of insertion. Note that we must also have
$f(i')=f(i)$ in this case. We also obtained that the label $i$ can
not contribute any inversion or separation to any preceding $i'$. This
guarantees that at the insertion of $i$ we only need to make sure that
the instances of~\eqref{eq:wiklb}  and ~\eqref{eq:wikub} involving $i$
are satisfied and that the values $w(i,j)$ for all previously inserted $j$
succeeding $i$ in $\sigma$ are consistently defined. 

First we show that $i$ can not be inserted at two different places into
$\sigma$. Assume that the word representing the currently inserted
labels is $\widetilde{\sigma}=i_1\cdots i_m$ and $i$ could be inserted
either right 
after $i_r$ or right after $i_s$ for some $r<s$ (we set $r=0$ if $i$ is
inserted as the first letter). Let $w_r$ respectively $w_s$ be a
weight function arising with the insertion of $i$ right after $i_r$,
respectively $i_s$. As in the proof of Lemma~\ref{lem:pfkey},  
~\eqref{eq:wiklb} implies $w_r(i,k)\geq w_r(i_s,k)$
for almost all $k$ satisfying $i_s<_{\sigma^{-1}}k$, the conclusion
$w_r(i,k)\geq w_s(i,k)$ fails to hold only
when we have $w_r(i_s,k)=a$, $w_r(i,k)=a-1$ and $i_s<k<i$. 
On the other hand, ~\eqref{eq:wiklb} implies 
$$
w_s(i_s,k)\geq w_s(i_s,i)+w_s(i,k)\geq w_s(i,k)
$$
for all $k$ satisfying $i_s<_{\sigma^{-1}}k$: the exceptional case
$w_s(i_s,k)=a-1$ , $w_s(i,k)=a$, $i<k<i_s$ cannot occur, as it would
create the inversion $(i_s,i)$, in contradiction with the possibility of
inserting $i$ right after $i_s$. Combining the last two observations and
using the fact that $w_r(i_s,k)=w_s(i_s,k)$ (it is the same weight
function created before the insertion of $i$), we obtain
\begin{equation}
\label{eq:rs}  
w_r(i,k)\geq w_s(i,k)
\end{equation}
for almost all $k$ satisfying $i_s<_{\sigma^{-1}}k$. We only need to
check the case when $w_r(i_s,k)=a$, $w_r(i,k)=a-1$ and $i_s<k<i$
hold. In this exceptional case $i>k$ implies that the maximum value of
$w_s(i,k)$ is $a-1=w_r(i,k)$, hence~\eqref{eq:rs} holds in this exceptional
case as well. As a consequence the total number of separations of $i$
can not be greater when we insert it right after $i_s$ than in the case when
we insert it right after $i_r$. The same holds for the number of inversions,
finally the inversion $(i,i_s)$ is only present when we insert $i$ right
after $i_r$. In conclusion we can not obtain the same value $f(i)$ in
both scenarios.

We are left to show that inserting $i$ at the same place can not be
continued by extending the weight function in two different ways. By
Lemma~\ref{lem:pfkey} we must set $w(i',i)=0$ for all previously
inserted $i'<_{\sigma^{-1}}i$, variation may only occur in the
definition of $w(i,j)$ for some $i<_{\sigma^{-1}}j$. Assume there are
two different extensions: $w_1$ and $w_2$ of the weight function. Then
there is a leftmost $j$ in the order $<_{\sigma^{-1}}$ for which
$w_1(i,j)\neq w_2(i,j)$, without loss of generality we may assume
$w_1(i,j)< w_2(i,j)$. By definition, for all $k$ satisfying
$i<_{\sigma^{-1}}k<_{\sigma^{-1}}j$ we have $w_1(i,k)=w_2(i,k)$, these
$k$'s contribute the same separation to $f(i)$ in either case. On the
other hand, for all $k$ satisfying $j<_{\sigma^{-1}}k$, the
inequality~\eqref{eq:wiklb} implies
$$
w_2(i,k)\geq \min(\beta(i,k), w_2(i,j)+w_2(j,k)).
$$
Using $w_2(i,j)\geq w_1(i,j)+1$ and $w_2(j,k)=w_1(j,k)$ (since $j$ and
$k$ were inserted before $i$) we may rewrite this as  
$$
w_2(i,k)\geq \min(\beta(i,k), w_1(i,j)+w_1(j,k)+1).
$$
The inequality~\eqref{eq:wikub} implies
$$
w_2(i,k)\geq \min(\beta(i,k), w_1(i,k)).
$$
Since $\beta(i,k)$ is the maximum value of $w_1(i,k)$, we obtain
$w_2(i,k)\geq  w_1(i,k)$ for all $k$ satisfying
$j<_{\sigma^{-1}}k$. Keeping in mind $w_1(i,j)<w_2(i,j)$, we obtain a
strictly larger separation in the computation of $f(i)$ when we use
$w_2$ instead of $w_1$. The set of inversions being the same, we can not
obtain the same $f(i)$ using either extension of the weight function. 
\end{proof}

\begin{remark}
{\em Mazin~\cite{Mazin} has shown that the Pak-Stanley labeling of the
  regions of the extended Shi arrangement is surjective. Together with
  Stanley's above result we have a self-contained proof of the fact that
  the Pak-Stanley labeling is a bijection between the regions of the
  extended Shi arrangement and the $a$-parking functions.  
}
\end{remark}

We conclude this section by revisiting and generalizing the construction
of Athanasiadis and Linusson~\cite{Athanasiadis-Linusson}.
\begin{definition}
We say that the regions of a contiguous, separated and integral
deformation of the braid arrangement in $V_{n-1}$ given by the equations
$$
x_i-x_j=m\quad m\in [-\beta(j,i),\beta(i,j)]\quad\mbox{for $1\leq i<j<n$}  
$$
{\em have Athanasiadis-Linusson diagrams} if for each $j\in
\{1,2,\ldots,n\}$ the set 
$\{\beta(i,j)\::\: i\neq j\}$ has at most two elements and these
elements are consecutive nonnegative integers. We set
$\beta(j)=\min_{i\neq j} \beta(i,j)$ for all $j$. 
\end{definition}
Note that for the extended Shi arrangement $\mathcal{A}^{a,a+1}_{n-1}$
we have $\beta(1)=\cdots=\beta(n)=a-1$. More generally, as a consequence
of Theorem~\ref{thm:wtc}, the regions of every contiguous, separated and
integral deformation of the braid arrangement satisfying the weak triangle
inequality have Athanasiadis-Linusson diagrams. 
When a contiguous, separated and integral deformation of the
braid arrangement in $V_{n-1}$ has Athanasiadis-Linusson
  diagrams, we define one for each of its regions essentially the same
  way as it is done in~\cite{Athanasiadis-Linusson}:

\begin{enumerate}
\item We fix a representative $\underline{x}=(x_1,x_2,\ldots,x_n)$ of the
  region. These appear on the reversed number line in the order
  $x_{\sigma(1)}>x_{\sigma(2)}>\cdots > x_{\sigma(n)}$. (The arrangement
  being separated forces $\sigma$ to be independent from the choice of
  $\underline{x}$.)  
\item For each $j\in \{1,2,\ldots,n\}$ satisfying $\beta(j)>0$ we also
  mark $x_j+\beta(j), x_j+\beta(j)-1,\ldots,x_j+1$ on the reversed
  number line and we draw an arc connecting $x_j+k+1$ with $x_j+k$ for
  $k=0,1,\ldots,\beta(j)-1$. We label all of these points with $j$.
\item For each $\{i,j\}\subseteq \{1,2,\ldots,n\}$ we
  also draw an arc between $x_i$ and $x_j+\beta(j)$ if $\beta(i,j)=\beta(j)+1$
  and $x_i-x_j>\beta(i,j)$ hold.
\item We remove all nested arcs, that is, all arcs that contain another arc.  
\end{enumerate}   

In terms of weighted digraphs, the permutation $\sigma$ is the total
order of gains, and in step (3) we add an arc between $x_i$ and
$x_j+\beta(j)$  exactly when $w(i,j)=\beta(i,j)=\beta(j)+1$ holds. In such a
case $w(i,j)>0$ implies $i<_{\sigma^{-1}} j$ regardless of the choice of
$\underline{x}$. In all other cases, assuming
$i<_{\sigma^{-1}} j$, we have $w(i,j)=w$ if and only if the point
representing $x_i$ is to the left of the point $x_i+w$, but not to the
left of $x_i+w+1$ (if $w<\beta(i,j)$ and there is a point representing
$x_i+w+1$. Since the valid $m$-acyclic weighted digraphs bijectively
represent the regions of our hyperplane arrangement and they can be
uniquely reconstructed from the associated Athanasiadis-Linusson
diagrams, there is always a bijection between the regions and their
Athanasiadis-Linusson diagrams. It seems hard, however, to characterize
which diagrams may occur in general. 

\begin{example}
\label{ex:athanalin}  
{\em The diagram shown in Figure~\ref{fig:athanalin} is obtained
  from~\cite[Fig.\ 6]{Athanasiadis-Linusson} by adding a single point
  $5$ and an arc between the rightmost copy of $3$ and this newly added point. 
\begin{figure}[h]
\centering
\input{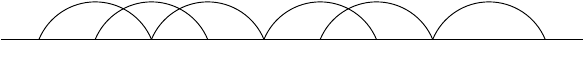_t}
\caption{An Athanasiadis-Linusson diagram}
\label{fig:athanalin}
\end{figure}  
Without this addition, the original diagram represents a region in
$\mathcal{A}^{1,2}_{3}$, we have $\beta(i,j)=2$ for $i<j$ and
$\beta(i,j)=1$ for $i>j$ for all pairs $\{i,j\}\subset \{1,2,3,4\}$. We
add $\beta(i,5)=\beta(5,i)=0$ for $i=1,2,4$, and we add $\beta(3,5)=1$
and $\beta(5,3)=0$.    
}  
\end{example}

As in~\cite{Athanasiadis-Linusson}, for each
$i\in\{1,2,\ldots,n\}$ we define $f(i)$ as the position of the leftmost
element of the continuous component of~$i$. We call the resulting 
$(f(1),f(2),\ldots,f(n))$ the {\em $\beta$-parking function} of the
region. For example, in Example~\ref{ex:athanalin} we have $f(1)=2$,
$f(2)=f(4)=1$ and $f(3)=f(5)=6$. The proof of Athanasiadis and Linusson
showing that we may reconstruct the diagram from its $\beta$-parking
function still applies: we insert the components in increasing order of
the position of the left end of their components and we interlace the
rest of the inserted component with the already inserted components in
such a way that no pair of nested arcs is formed. It seems hard to
characterize the resulting $\beta$-functions in general, but they can be
easily visualized, using an idea implicit in~\cite[Ch1. Exercise
  (32)(c)]{Sagan}.  
\begin{figure}[h]
\centering
\input{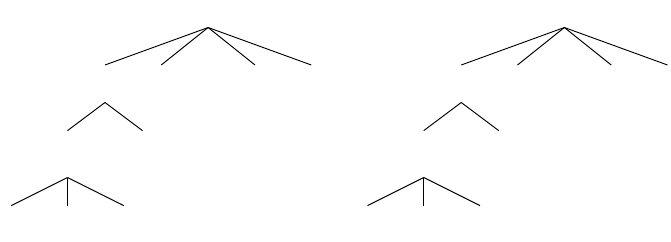_t}
\caption{A rooted tree encoding an Athanasiadis-Linusson diagram}
\label{fig:alintree}
\end{figure}  
\begin{definition}
Given an Athanasiadis-Linusson diagram, we define the {\em parking tree}
representing it as follows:
\begin{enumerate}
\item Replace the labels $j$ with $j_1$, $j_2$,\ldots, $j_{\beta(j)+1}$,
  numbered left to right, so that we can distinguish the copies.
\item The copies of the labels satisfying $f(j)=1$ become the children
  of the root $0$.
\item We number the nodes in the tree level by level and in
  increasing order of the labels (breadth-first search order).
\item Once we inserted the copies of all labels $j$ satisfying $f(j)<i$,
  all copies of the labels $j$ satisfying $f(j)=i$ will be the children
  of the node whose number is $i$.    
\end{enumerate}  
\end{definition}  
The parking tree associated to the Athanasiadis-Linusson diagram shown
in Figure~\ref{fig:athanalin} is the tree on the left hand side in
Figure~\ref{fig:alintree}. The tree on the right hand side shows the
numbering of the positions in a breadth-first search order: we call this
the {\em position numbering}. 
The description of such a parking tree depends on the characterization
of the set of labels which can be siblings. This seems hard in general,
but easy in the following special case.

\begin{definition}
For a sequence $\underline{\beta}=(\beta(1),\beta(2),\ldots,\beta(n))\in
\mathbb{N}^n$ we define the {\em $\underline{\beta}$-extended Shi
  arrangement} as the hyperplane arrangement
\begin{equation}
x_i-x_j=-\beta(i),-\beta(i)+1,\ldots,\beta(j)+1 \quad 1\leq i<j\leq n\quad\mbox{in $V_{n-1}$.}
\end{equation}
\end{definition}  
In particular, setting  $\beta(1)=\cdots=\beta(n)=a-1$ yields the extended Shi
arrangement $\mathcal{A}^{a,a+1}_{n-1}$.
\begin{theorem}
The number of regions in a $\underline{\beta}$-extended Shi
arrangement $\mathcal{A}$ is
$$r(\mathcal{A})=\left(\sum_{j=1}^n (\beta(j)+1)+1\right)^{n-1}.$$
\end{theorem}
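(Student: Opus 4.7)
The plan is to combine the parking tree encoding established just above with a Pollak-style cyclic counting argument. By Corollary~\ref{cor:bijection} each region of $\mathcal{A}$ corresponds to a unique valid $m$-acyclic weighted digraph, which by the preceding discussion encodes an Athanasiadis-Linusson diagram and hence a parking tree with exactly $N=\sum_{j=1}^n(\beta(j)+1)+1$ nodes (the root plus $\beta(j)+1$ sibling copies of each label $j$). The tree is in turn specified by a parking function $f:\{1,\ldots,n\}\to\{1,\ldots,N-1\}$ attaching the $j$-clan to the node numbered $f(j)$ in BFS order, so it suffices to count the valid parking functions. As a first step I would verify that the $\underline{\beta}$-extended Shi arrangement meets the hypothesis needed for the parking tree to be well-defined, which amounts to noting that the sets $\{\beta(i,j):i\neq j\}$ for each $j$ are consecutive nonnegative integers, a direct check from the definition.

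To count the valid parking functions I would use the following cyclic setup. Arrange $N$ positions on a circle labeled $1,\ldots,N$. For each clan $j$ (of size $\beta(j)+1$) pick a preference $p_j\in\{1,\ldots,N\}$, giving $N^n$ preference sequences in total. Process the clans in a canonical order (say by increasing $p_j$, with ties broken by label), each clan $j$ occupying the first block of $\beta(j)+1$ consecutive free positions at or clockwise from $p_j$. Since $\sum_j(\beta(j)+1)=N-1$, after all clans park exactly one position remains empty. The key observation is that translating every $p_j$ by $+1\pmod N$ translates the empty position by $+1\pmod N$, so the $N^n$ preference sequences split into orbits of size $N$ under this rotation action, and exactly $N^{n-1}$ of them leave position $N$ empty. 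Identifying these with valid parking trees (position $N$ serving as the root), one obtains the equality $r(\mathcal{A})=N^{n-1}$.

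The main obstacle is proving that the parking process is well-defined for every preference sequence. With varying clan sizes the naive argument can deadlock: the remaining free positions can form short gaps that accommodate no sufficiently large clan. I would resolve this by exploiting the canonical ordering and arguing inductively that, when clan $j$ is to be placed, the set of currently free positions decomposes into arcs whose total length is $N - \sum_{k\text{ placed}}(\beta(k)+1)$ and that contains an arc of length $\ge\beta(j)+1$ at or clockwise from $p_j$. This is the only subtlety; once it is in place, the rotational symmetry is transparent and the bijection between rotation orbits and parking trees is mechanical, yielding the stated count.
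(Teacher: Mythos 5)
Your reduction of the theorem to counting parking trees (equivalently, valid $\beta$-parking functions $f$) agrees with the paper's first step, but the Pollak-style circular argument you propose for the count has a genuine gap, and it is exactly at the point you flag. The contiguous-block parking process is \emph{not} well defined even when clans are processed in increasing order of preference: take $N=7$ with clan sizes $4,1,1$ (i.e.\ $\beta=(3,0,0)$) and preferences $5,1,4$. The two singletons park at positions $1$ and $4$, after which the free positions split into the arcs $\{2,3\}$ and $\{5,6,7\}$, so the size-$4$ clan finds no arc of length $4$ anywhere on the circle; your claimed inductive lemma (``there is an arc of length $\ge\beta(j)+1$ at or clockwise from $p_j$'') is false. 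Worse, within the rotation orbit of this preference vector some shifts deadlock and some park successfully, so the equivariance statement ``shifting all $p_j$ by $+1$ shifts the empty spot by $+1$'' cannot be used to split $N^n$ into orbits each contributing one sequence with spot $N$ empty. Finally, the identification of ``sequences leaving spot $N$ empty'' with parking trees is not mechanical and in fact fails: for clan sizes $2,1$ (so $N=4$) the sequence $(p_1,p_2)=(4,1)$ parks successfully (the size-$2$ clan wraps past position $4$ and occupies $2,3$) and leaves spot $4$ empty, yet corresponds to no tree; the valid trees here are the four given by $f\in\{(1,1),(1,2),(1,3),(2,1)\}$, so your scheme overcounts. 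The classical Pollak argument rests on the fact that a car never drives past the empty spot; once clans have size greater than one they can pass over an empty spot that is too small for them, and both the orbit count and the bijection break.

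By contrast, the paper avoids any parking process: after observing that for the $\underline{\beta}$-extended Shi arrangement a set of siblings in a parking tree must consist of \emph{all} copies of each label it contains, it counts the resulting trees directly with a colored Pr\"ufer code (repeatedly deleting all copies of the least ``exposed'' color and recording their common parent), which produces a code of length $n-1$ with entries from the $\sum_{j}(\beta(j)+1)+1$ vertices and is invertible, giving the stated power immediately. If you want to salvage a cycle-lemma proof you would have to change the parking rule (e.g.\ let a clan occupy the next $\beta(j)+1$ \emph{free} positions, not a contiguous block, which removes deadlock and restores the ``never pass the empty spot'' argument) and then actually prove the bijection between the surviving sequences and the valid $\beta$-parking functions; as written, the argument does not go through.
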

\begin{proof}
Obviously, the regions of a $\underline{\beta}$-extended Shi arrangement have
Athanasiadis-Linusson diagrams. When we build these diagrams,
$\beta(i,j)=\beta(j)+1$ holds exactly when $i<j$, hence we may draw an
arc between the points representing $x_i$ and $x_{j}+\beta(j)$ if and
only if $i<j$ and $x_i-x_j>\beta(j)+1$ hold, and these are all the
potential arcs added in step~(3). As a consequence, a set of labels can
be a set of 
siblings exactly when for each $j$ it contains either all copies of $j$,
or neither of them. We may count all such rooted trees using a colored
variant of the Pr\"ufer code algorithm as follows. We consider the
elements of $\{1,2,\ldots,n\}$ as colors, and we say that a color $j$ is
{\em exposed} if all labels $j_i$ are leaves in the tree. In each step
we remove the vertices of the least exposed color and record the common
parent $p(j_i)$ in our Pr\"ufer code. We never remove the root
$0$: we consider it the highest numbered vertex. We never remove
$p(j_i)$ before all copies of $j_i$ as our algorithm removes leaves only
in each step, hence the unique path between any not yet removed vertex and the
root must still be present. For example, for the tree on the
left hand side of Figure~\ref{fig:alintree}, the least exposed
color is $3$. The vertex $1_2$ is also a leaf, but $1_1$ is not, hence the
color $1$ is not yet exposed. There is always at least one exposed
color, because a leaf whose distance from the root is maximum has only
leaf siblings. We stop when
all remaining non-root vertices have the same color. For the tree on the
left hand side of Figure~\ref{fig:alintree}, we remove the colors
$3,4,5,1$ in this order and we record the Pr\"ufer code
$(1_1,0,1_1,2_1)$. The tree can be uniquely reconstructed from its
colored Pr\"ufer code, as follows. The least nonzero color not present
in the code is $3$: this must be the first removed color, and the common
parent of all vertices of color $3$ is the first coordinate of the code,
that is, $1_1$. We remove the first
coordinate from the Pr\"ufer code and record $3$ as a color in a
separate list of already reinserted vertices: we obtain the pair of
lists $((0,1_1,2_1), (3))$. The least nonzero color not present in the
current pair of lists is $4$, hence the vertices of color $4$ have
parent $0$, and we get the pair of lists $((1_1,2_1), (3,4))$. Now the
least nonzero color not present in our pair of lists is $5$ and its
parent is $1_1$. We continue the reconstruction of our tree in a similar
fashion. Our colored Pr\"ufer code has $n-1$ coordinates and each
coordinate can be either the root or any of the labeled vertices.  
\end{proof}

\section{The simplification lemma and ceiling hyperplanes}
\label{sec:ceiling}

Each edge in a weighted digraph we use to encode a region in a
deformation of the braid arrangement corresponds to a linear inequality
satisfied by the region. Not all of these inequalities define facets of the
boundary of the closure of region, some of them are consequences of the other
inequalities. The key result in this section is the Simplification Lemma
(Lemma~\ref{lem:simplification}) and its converse, which allow us to
identify the facet inequalities. These results allow us to shed new
light on and generalize some of the results of Armstrong and
Rhoades~\cite{Armstrong-Rhoades} on ceiling diagrams for the deleted Shi
and Ish arrangements (defined in Definitions~\ref{def:delShi} and
\ref{def:genIsh} below). The results in this section were inspired by 
suggestions of an anonymous referee. 

\begin{example}
\label{ex:nonwall}  
{\em 
  Let $n=3$, and consider the arrangement given by all
  hyperplanes of the form $x_i-x_j=0$ and by the hyperplanes $x_1-x_2=1$ and
  $x_1-x_3=1$ in $V_2$. The hyperplanes 
  of this arrangement, together with some additional hyperplanes (to be
  ignored at this point), are shown in Figure~\ref{fig:ish}. Let $R$ be
  the triangular region given by $x_1-x_2>0$, $x_2-x_3>0$, $x_1-x_2<1$  and
  $x_1-x_3<1$. The vertices  of $R$ are $(1/3,1/3,1/3)$, $(2/3,2/3,-1/3)$ and
  $(1,0,0)$. The associated weighted digraph is shown in
  Figure~\ref{fig:nonwall}. This is a separated deformation, the linear order of
  gains is the identity permutation for $R$, and $w(i,j)=0$ 
  holds for all $i<j$. The directed edge $2\rightarrow 1$ represents
  $x_1-x_2<1$. This inequality is not a facet of the boundary 
  of the topological closure of $R$: it is a consequence of the
  inequalities $x_2-x_3>0$ and $x_1-x_3<1$. Its only common point with
  the closure of $R$ is the vertex $(1,0,0)$.   
}  
\end{example}
\begin{figure}[h]
\centering
\input{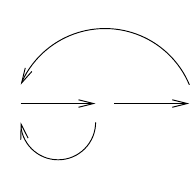_t}
\caption{The weighted digraph associated to Example~\ref{ex:nonwall}.}
\label{fig:nonwall}
\end{figure}  
This example illustrates the fact that not all
  weighted edges in our weighted digraphs encode facet 
inequalities for the corresponding regions. The
$m$-acyclic property only ensures that each digraph encodes a nonempty
region. Furthermore, different weighted digraphs assign a different
weight to at least one directed edge, this difference between the
implied inequalities guarantees that the corresponding regions are disjoint,
thus different. Example~\ref{ex:nonwall} illustrates the following
phenomenon.
\begin{lemma}[Simplification lemma]
\label{lem:simplification}
Let ${\mathcal A}$ be a deformation of a braid arrangement in $V_{n-1}$, and
consider any valid weighted $m$-acyclic digraph representing a region of
${\mathcal A}$. If there is a directed path $i_1\rightarrow
i_2\rightarrow \cdots \rightarrow i_k$ and a directed edge
$i\rightarrow j$ such that
\begin{equation}
\label{eq:reduction}
w(i_1,i_2)+w(i_2,i_3)+\cdots + w(i_{k-1},i_k)\geq w(i_1,i_k)
\end{equation}
holds, then the linear inequality represented by the weighted edge
$i_1\rightarrow i_k$ is a consequence of the inequalities represented by
the edges in the directed path $i_1\rightarrow
i_2\rightarrow \cdots \rightarrow i_k$. 
\end{lemma}
Indeed, the directed edge $i\rightarrow j$ represents the linear
inequality $x_i-x_j>w(i,j)$, and the proof of
Lemma~\ref{lem:simplification}  is a simple addition of such
inequalities. Lemma~\ref{lem:simplification} also has the following
converse. 
\begin{proposition}
\label{prop:reduction}
Let ${\mathcal A}$ be any deformation of the braid arrangement in $V_{n-1}$, and
consider any valid weighted $m$-acyclic digraph representing a region of
${\mathcal A}$. If there is a directed edge $i\rightarrow j$ such that
the inequality $x_i-x_j>w(i,j)$ is a consequence of linear inequalities
represented by other directed edges, then there is a directed path
$i=i_1\rightarrow i_2\rightarrow \cdots \rightarrow i_k=j$ which,
combined with the edge $i\rightarrow j$, satisfies \eqref{eq:reduction}. 
\end{proposition}  
\begin{proof}
If $x_i-x_j>w(i,j)$ is a consequence of the inequalities encoded by the
other directed edges, then the contrary assumption $x_i-x_j\leq w(i,j)$
must lead to a contradiction. Strengthening this contrary assumption to
$x_i-x_j<w(i,j)$ cannot remove this contradiction. If we delete the
directed edge $j\rightarrow i$ (if present), and replace the
directed edge $i\rightarrow j$, of weight $w(i,j)$, with an edge
$j\rightarrow i$, of weight $-w(i,j)$, the resulting weighted digraph
must contain an $m$-ascending cycle $C=(i_1,i_2,\ldots,i_k)$ by
Theorem~\ref{thm:mincost}. This directed cycle  was not present in the original
weighted digraph, hence it must contain the newly added edge
$j\rightarrow i$. Without loss of generality we may assume $i_1=i$ and
$i_k=j$. The directed path $i=i_1\rightarrow i_2\rightarrow \cdots
\rightarrow i_k=j$ is also present in the weighted digraph we started
with, and \eqref{eq:reduction} is equivalent to stating that $C$ is
$m$-ascending. 
\end{proof}  
By Proposition~\ref{prop:reduction}, repeated use of
Lemma~\ref{lem:simplification} allows us to simplify the weighted
digraphs representing the regions by removing all  
``redundant'' directed edges representing linear inequalities which are
direct consequences of the linear inequalities represented by the
remaining edges. The remaining edges necessarily represent
facets of the boundary of the topological closure of the region defined by the
represented inequalities. This is a simple and general idea, but it seems a
serious challenge to describe the resulting weighted digraphs in
general. This section is about a class of deformations of the braid
arrangements where this task appears to be more manageable.    

\begin{definition}
\label{def:Shiish} We call a deformation of the
braid arrangement in $V_{n-1}$ given by \eqref{eq:dbraid} {\em
  Shiish}  if for every $i<j$ the set
$A_{ij}=\{a_{ij}^{(1)}, a_{ij}^{(2)}, \ldots,
a_{ij}^{(n_{ij})}\}$ satisfies $n_{ij}>0$ and $a_{ij}^{(1)}=0$. 
Here we assume that $a_{ij}^{(1)}< a_{ij}^{(2)}< \cdots<
a_{ij}^{(n_{ij})}$ holds. 
\end{definition}  
By definition a Shiish arrangement is separated,
the partial order of gains associated to each region is a linear order
$\sigma$. As noted in Section~\ref{sec:eShi}, a region whose linear
order of gains is $\sigma$ is contained in the region
$x_{\sigma(1)}>x_{\sigma(2)}>\cdots > x_{\sigma(n)}$ of the braid
arrangement. In terms of $\sigma$, we may describe the weights as follows.
\begin{lemma}
\label{lem:ceiling}  
Consider a valid weighted $m$-acyclic digraph of a region in a Shiish
arrangement in $V_{n-1}$ whose linear order of gains is $\sigma$. Then
for each $i<j$ we have the following: 
\begin{enumerate}
\item If $\sigma(i)>\sigma(j)$, then
$w(\sigma(i),\sigma(j))=0$ and $w(\sigma(j),\sigma(i))=-\infty$ hold.
\item If $\sigma(i)<\sigma(j)$, then $w(\sigma(i),\sigma(j))$
belongs to the set $A_{\sigma(i),\sigma(j)}$ and $w(\sigma(j),\sigma(i))$ is
given by
$$w(\sigma(j),\sigma(i))=
\begin{cases}
-a_{\sigma(i)\sigma(j)}^{(k+1)}, &\text{if
  $w(\sigma(i),\sigma(j))=a_{\sigma(i)\sigma(j)}^{(k)}$ for some $k<n_{\sigma(i)\sigma(j)}$;}\\
-\infty,&\text{if $w(\sigma(i),\sigma(j))=a_{\sigma(i)\sigma(j)}^{n_{\sigma(i)\sigma(j)}}$.}\\
\end{cases}  
$$
\end{enumerate}
\end{lemma}  
The straightforward verification is left to the reader.
\begin{corollary}
\label{cor:ceiling1}  
The weighted $m$-acyclic digraph encoding a region in a Shiish
arrangement in $V_{n-1}$ may be uniquely reconstructed from the linear
order of gains $\sigma$ and the value of each $w(\sigma(i),\sigma(j))$
such that $i<j$, $\sigma(i)<\sigma(j)$, and
$w(\sigma(i),\sigma(j))<a_{\sigma(i)\sigma(j)}^{(n_{\sigma(i)\sigma(j)})}$ hold.
\end{corollary}
The sufficient information described in Corollary~\ref{cor:ceiling1} is
related to the information encoded in the {\em ceiling diagrams} defined
in~\cite{Armstrong-Rhoades} for the (deleted) Shi and Ish arrangements.
Given a region $R$ in a hyperplane arrangement, a hyperplane $H$ is a
{\em wall} of R if it is the affine span of facet of the boundary of the 
closure of $R$. The wall $H$ is called a {\em ceiling} if $H$ does
not contain the origin and the region $R$ and the origin lie in the
same half-space of $H$.
\begin{proposition}
\label{prop:ceiling} 
Let $R$ be a region of a Shiish arrangement in $V_{n-1}$, encoded by an
weighted $m$-acyclic digraph whose linear order of gains is
$\sigma$. Then all ceilings of the region are hyperplanes
given by equations of the form
$$
x_{\sigma(j)}-x_{\sigma(i)}=w(\sigma(j),\sigma(i))
$$
where $i<j$, $\sigma(i)<\sigma(j)$ and $w(\sigma(j),\sigma(i))>-\infty$ hold. 
\end{proposition}
\begin{proof}
Let us fix $i<j$. Observe first that the directed edge
$\sigma(j)\rightarrow \sigma(i)$ is 
present with a finite weight if and only $\sigma(i)<\sigma(j)$ and
$w(\sigma(i),\sigma(j))=a_{\sigma(i)\sigma(j)}^{(k)}$ is not the maximal
element of $A_{\sigma(i)\sigma(j)}$. If these
conditions are satisfied then $w(\sigma(j),\sigma(i))=-a_{\sigma(i)\sigma(j)}^{(k+1)}$.
Hence the
statement may be rephrased as follows: all ceiling equations are of the form  
$$
x_{\sigma(i)}-x_{\sigma(j)}=a_{\sigma(i)\sigma(j)}^{(k+1)}
$$
where $i<j$, $\sigma(i)<\sigma(j)$ and
$w(\sigma(i),\sigma(j))=a_{\sigma(i)\sigma(j)}^{(k)}$ for some $k<n_{\sigma(i)\sigma(j)}$. We will prove
this rephrased statement. 

All walls must have an equation of the form
$x_{\sigma(i)}-x_{\sigma(j)}=a_{\sigma(i)\sigma(j)}^{(k)}$. If 
for some $i<j$ we have $\sigma(i)>\sigma(j)$ then the only hyperplane of
the form $x_{\sigma(i)}-x_{\sigma(j)}=a_{\sigma(i)\sigma(j)}^{(k)}$
involved in the 
definition of $R$ is the hyperplane
$x_{\sigma(i)}-x_{\sigma(j)}=0$. This hyperplane is not a 
ceiling as it contains the origin. Consider next the case when
$\sigma(i)<\sigma(j)$ holds. If
$w(\sigma(i),\sigma(j))=a_{\sigma(i)\sigma(j)}^{(n_{ij})}$,
then the hyperplane $x_{\sigma(i)}-x_{\sigma(j)}=0$ contains the origin
and all other hyperplanes
$x_{\sigma(i)}-x_{\sigma(j)}=a_{\sigma(i)\sigma(j)}^{(k)}$ for
$2\leq k\leq n_{ij}$ separate the origin from $R$. Finally,
if $w(\sigma(i),\sigma(j))=a_{\sigma(i)\sigma(j)}^{(k)}$ holds for some $k<n_{ij}$ then all
points in $R$ satisfy 
$$a_{\sigma(i)\sigma(j)}^{(k)}<x_{\sigma(i)}-x_{\sigma(j)}<a_{\sigma(i)\sigma(j)}^{(k+1)}$$ 
Only the hyperplane
$x_{\sigma(i)}-x_{\sigma(j)}=a_{\sigma(i)\sigma(j)}^{(k+1)}$ can be a ceiling. 
\end{proof}  
Not all hyperplanes satisfying the criteria of
Proposition~\ref{prop:ceiling} are ceilings. They all have the
property that they contain the region and the origin on the same side,
but some of them are not walls, as we have seen in Example~\ref{ex:nonwall}.
As a consequence of Proposition~\ref{prop:reduction} we obtain the
following description.
\begin{corollary}
  \label{cor:ceiling2}
Let $R$ be a region of a Shiish arrangement in $V_{n-1}$, encoded by an
weighted $m$-acyclic digraph whose linear order of gains is
$\sigma$. Then the ceilings of the region are those hyperplanes
$$
x_{\sigma(j)}-x_{\sigma(i)}=w(\sigma(j),\sigma(i))
$$
satisfying $i<j$, $\sigma(i)<\sigma(j)$ and
$w(\sigma(j),\sigma(i))>-\infty$ for which the edge
$\sigma(j)\rightarrow \sigma(i)$ can not be eliminated using
Lemma~\ref{lem:simplification}.   
\end{corollary}  

It seems an interesting question for future research, how to identify
the ceilings of the regions in an arbitrary Shiish arrangement using
Corollary~\ref{cor:ceiling2}.  In general it seems also a challenge
to verify the $m$-acyclic property, given $\sigma$ and the weight function
(reconstructed from Corollaries~\ref{cor:ceiling1} and
\ref{cor:ceiling2}). The work of finding the ceiling hyperplanes is done 
in~\cite{Armstrong-Rhoades} for the deleted Shi and Ish arrangements. We
begin with having a closer look at the deleted Shi arrangement.
\begin{definition}
\label{def:delShi} We call a Shiish arrangement in $V_{n-1}$ a {\em
deleted Shi arrangement} if it is a contiguous integral deformation of
the braid arrangement, and
$\alpha(i,j)=0$ and $\beta(i,j)\in \{0,1\}$ hold for every $i<j$ . 
\end{definition}  
In other words, a deleted Shi arrangement consists of all hyperplanes of
the braid arrangement and equations of the form $x_i-x_j=1$ for some
$i<j$; we can think of the latter ones as marking the edges of a simple loopless
graph $G$. This is the definition given in~\cite{Armstrong-Rhoades}.
The directed edges $\sigma(j)\rightarrow \sigma(i)$ of weight
$w(\sigma(j),\sigma(i))$ appearing in
Proposition~\ref{prop:ceiling} are precisely the edges of this graph
$G$, oriented in the direction that is opposite to the order of
$\sigma$. After fixing an 
order $x_{\sigma(1)}>\cdots >x_{\sigma(n)}$, we may represent the edges
of $G$ as arcs, indicating inequalities of the form
$x_{\sigma(i)}-x_{\sigma(j)}>1$ for $i<j$. An application of
Lemma~\ref{lem:simplification} is the following, well-known observation,
used in the construction of the Athanasiadis-Linusson diagrams
(discussed in Section~\ref{sec:eShi}): nested edges of $G$ may be
removed. We omit the details, as our approach in this case appears to
end up leading to the same ideas as the one already present
in~\cite{Armstrong-Rhoades,Athanasiadis-Linusson}.  We only add the
following consequence of Theorem~\ref{thm:wtc}. 
\begin{corollary}
Every deleted Shi arrangement satisfies the weak triangle inequality.  
\end{corollary}
Our approach allows considering a generalization of the deleted Ish
arrangements discussed in~\cite{Armstrong-Rhoades}. 
\begin{definition}
\label{def:genIsh}
We call a Shiish arrangement in $V_{n-1}$ a {\em generalized Ish
  arrangement} if for all 
$i<j$, $n_{ij}>1$ implies $i=1$. A generalized Ish arrangement is a
  {\em deleted Shi arrangement} if for each $j\in
  \{2,\ldots,n\}$,  the set
  $A_{1j}=\{a_{1j}^{(1)},a_{1j}^{(2)},\ldots,a_{1j}^{(n_{1j})}\}$ is a subset of
  $[0,j-1]=\{0,1,\ldots,j-1\}$.    
\end{definition}  
The original {\em Ish arrangement}, first introduced in the
work of Armstrong~\cite{Armstrong}, is the deleted Ish arrangement for
which $n_{1j}=j$, and thus $A_{1j}=[0,j-1]$ holds for $2\leq j\leq
n$. It is an easy consequence of Theorem~\ref{thm:wtc} that the Ish
arrangement does not satisfy the weak triangle inequality for $n\geq
3$. On the other hand, the verification of the $m$-acyclic property is
particularly easy for generalized Ish arrangements because of the
following result. 
\begin{theorem}
\label{thm:Ishcycles}  
A valid weighted digraph associated to a generalized Ish arrangement in
$V_{n-1}$ is $m$-acyclic if and only if it satisfies the
following for some permutation $\sigma$ of $\{1,2,\ldots,n\}$:
\begin{enumerate}
\item For $\{i,j\}\subseteq \{2,\ldots,n\}$ satisfying
  $\sigma^{-1}(i)<\sigma^{-1}(j)$ there is only a directed edge
  $i\rightarrow j$ of weight $w(i,j)=0$ (and we have $w(j,i)=-\infty)$);
\item every directed cycle of length $3$ and containing $1$ has negative
  weight.   
\end{enumerate}  
\end{theorem}  
\begin{proof}
  Given a valid $m$-acyclic weighted digraph, condition~(1) is a
  consequence of $A_{ij}=\{0\}$ for $1<i<j$, and condition~(2) is a special
case of the $m$-acyclic property. To prove the converse, assume by
contradiction that a valid weighted digraph satisfies the stated
conditions, but a directed cycle $(i_1,\ldots, i_k)$ of length $k>3$ has
nonnegative weight. By condition~(1), the restriction of our weighted
digraph to the set $\{2,\ldots,n\}$ is acyclic, hence we must
have $1\in\{i_1,\ldots,i_k\}$. Without loss of generality we may assume
$1=i_1$, the directed path $i_2\rightarrow \cdots \rightarrow i_k$ has
only edges of weight $0$, and it is easy to deduce from condition~(1)
that there is an edge $i_2\rightarrow i_k$ of weight zero. As a
consequence, the directed cycle $(i_1,i_2,i_k)$ of length $3$ contains
$1$ and has the same weight, in contradiction with condition~(2).    
\end{proof}  
Combining Corollary~\ref{cor:ceiling1} with Theorem~\ref{thm:Ishcycles}
we obtain the following simple way to encode each region of a
generalized Ish arrangement. 
\begin{definition}
\label{def:Ishcode}  
Consider a generalized Ish arrangement in $V_{n-1}$.  A {\em code}  of a
region is a pair $(\sigma,\omega)$ where $\sigma$ is a
permutation of 
$\{1,2,\ldots,n\}$ and
$$\omega=(\omega(1),\ldots,\omega(n))$$ 
is a vector in such a way that $\omega(j)=0$ holds for $j\leq
\sigma^{-1}(1)$ , and for $j>\sigma^{-1}(1)$ the number $\omega(j)$
belongs to the set $A_{1,\sigma(j)}$. We associate to the pair
$(\sigma,\omega)$ a weighted digraph by fixing $\sigma$ as the linear
order of gains, and setting $w(\sigma(j),\sigma^{-1}(1))=\omega(j)=0$ for
$j<\sigma^{-1}(1)$ and $w(1,\sigma(j))=\omega(j)$ for
$j>\sigma^{-1}(1)$. We extend the definition of the weights using
Corollary~\ref{cor:ceiling1}.  
\end{definition}  
Clearly each code encodes a valid weighted digraph associated to the
arrangement and each valid weighted digraph may be encoded in the above
way. Using Theorem~\ref{thm:Ishcycles} it is easy to check which codes
encode $m$-acyclic weighted digraphs.
\begin{proposition}
\label{prop:macycIsh}
For an element $a\in A_{ij}=\{a_{ij}^{(1)}, a_{ij}^{(2)}, \ldots,
a_{ij}^{(n_{ij})}\}$ let us set
$$
a^+=
\begin{cases}
  a_{ij}^{(k+1)}, & \text{if $a=a_{ij}^{(k)}$ for some $k<n_{ij}$};\\
  \infty, & \text{if $a=a_{ij}^{(n_{ij})}$.}\\
\end{cases}  
$$
A code $(\sigma,\omega)$ encodes a valid $m$-acyclic digraph if and only
if $\omega(j_1)<\omega(j_2)^+$ holds whenever $\sigma^{-1}(1)<j_1<j_2$.
\end{proposition}  
\begin{proof}
By Theorem~\ref{thm:Ishcycles} we only need to check that every directed
cycle of length $3$ containing $1$ has negative weight. Since no
directed edge ends in a $\sigma(j)$ preceding $1$ in the order of
$\sigma$, any directed cycle of length $3$ must be of the form 
$(1,\sigma(j_1),\sigma(j_2))$ where $\sigma^{-1}(1)<j_1<j_2$.
Here we have $w(1,\sigma(j_1))=\omega(j_1)$ and $w(j_1,j_2)=0$.
The directed edge $\sigma(j_2)\rightarrow 1$ is not present (has weight
$-\infty$) exactly when $w(1,\sigma(j_2))=\omega(j_2)$ is the largest
element of $\max A_{1,\sigma(j_2)}$, or equivalently when
$\omega(j_2)^+=\infty$. 
If $w(1,\sigma(j_2))=\omega(j_2)$ is not the largest
element of $A_{1,\sigma(j_2)}$ then
$w(\sigma(j_2),1)=-\omega(j_2)^+$. The
weight of the directed cycle is $\omega(j_1)-\omega(j_2)^+$. 
\end{proof}
\begin{remark}
\label{rem:contintv}
{\em Proposition~\ref{prop:macycIsh} takes a particularly nice form if the
generalized Ish arrangement is contiguous and integral. The original Ish
arrangement does have this property. In this case the
condition $\omega(j_1)<\omega(j_2)^+$ may be restated as follows: either
$\omega(j_2)=\beta(1,\sigma(j_2))$, that is, $\omega(j_2)=\max
A_{1,\sigma(j_2)}$ or $\omega(j_1)\leq \omega(j_2)$ holds. 
}
\end{remark}

\begin{figure}[h]
\centering
\input{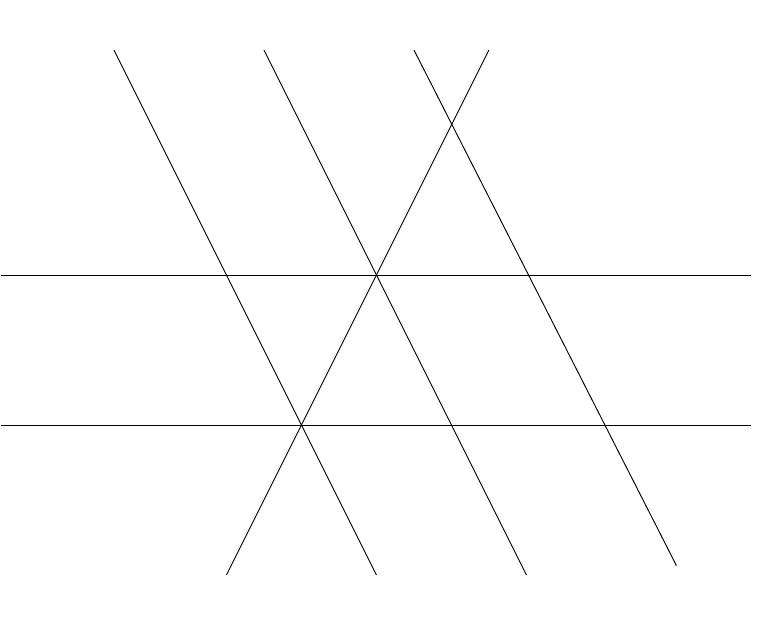_t}
\caption{Codes labeling the regions of the Ish arrangement}
\label{fig:ish}
\end{figure}  

The labeling of the Ish arrangement in $V_2$ is shown in Figure~\ref{fig:ish}. 
A region is called {\em dominant} if it is contained in the braid cone
$x_1>x_2>\cdots> x_n$, equivalently, $\sigma$ is the identity
permutation. As a direct consequence of Corollary~\ref{cor:ceiling1},
dominant regions are in bijection with all vectors
$\omega=(\omega(1),\ldots,\omega(n))$ such that $0\leq \omega(j)\leq
j-1$ holds for $1\leq j\leq n$ and 
$\omega(1)\leq\cdots\leq\omega(n)$ is satisfied. It is easy to show that
the number of such vectors is a Catalan number. 
We conclude this section with a description of the ceiling hyperplanes
of a region in a generalized Ish arrangement.
\begin{proposition}
\label{prop:ceilingf}  
Let $R$ be a region in a generalized Ish arrangement in $V_{n-1}$,
whose associated weighted digraph has the code $(\sigma,\omega)$. Then a
hyperplane is a ceiling hyperplane if and only if it satisfies an
equation of the form 
\begin{equation}
\label{eq:ishceiling}  
x_{\sigma(1)}-x_{\sigma(j)}=\omega(j)^+
\end{equation}
for some $1<j$ satisfying $\sigma(1)<\sigma(j)$ and
$\omega(j)^+< \infty$, and there is no $j'$ satisfying $j<j'$ and
$\omega(j)^{+}\geq\omega(j')^+$. 
\end{proposition}  
\begin{proof}
The fact that all ceiling hyperplanes must be of the form
\eqref{eq:ishceiling} for some $1<j$ satisfying $\sigma(1)<\sigma(j)$ and 
$\omega(j)^+< \infty$ is just 
Corollary~\ref{cor:ceiling1}, restated using the $\omega$-notation.
In particular, $\omega(j)^+< \infty$ is equivalent to
$\omega(j)<a_{1j}^{n_{1j}}$.  Assume first that there is a $j'$
satisfying $j<j'$ and $\omega(j)^{+}\geq \omega(j')^+$ . The latter
inequality is equivalent to $w(\sigma(j),\sigma(1))\leq 
w(\sigma(j'),\sigma(1))$. In this case we may apply 
Lemma~\ref{lem:simplification} to the path $\sigma(j)\rightarrow
\sigma(j')\rightarrow 1$, since we have
$$
w(\sigma(j),\sigma(j'))+w(\sigma(j',\sigma(1)))=0-\omega(j')^+\geq
-\omega(j)^+=w(\sigma(j),\sigma(1)). 
$$
Therefore \eqref{eq:ishceiling} is not the equation of a ceiling
hyperplane. Conversely, assume \eqref{eq:ishceiling} is not the equation
of a ceiling hyperplane. Then, by Proposition~\ref{prop:macycIsh}, there
is a directed path $\sigma(j)=i_1\rightarrow\cdots\rightarrow
i_k=\sigma(1)$ from $\sigma(j)$ to $\sigma(1)$ whose total weight is at
least $w(\sigma(j),\sigma(1))$. Let us draw our valid weighted digraph
in such a way that the vertices are aligned on a horizontal line left to
right in the order $\sigma(1), \sigma(2),\ldots,\sigma(n)$, as shown in
Figure~\ref{fig:nonwall}. Introducing $\sigma(j')=i_{k-1}$, only
the weight of the edge $i_{k-1}\rightarrow i_k=\sigma(j')\rightarrow
\sigma(1)$ can be nonzero, this edge is directed right to left. All
other edges of $i_1\rightarrow\cdots\rightarrow i_k$ point in the left
to right direction. Hence we must have $j<j'$ and 
$$
-\omega(j')^+=w(\sigma(j'),\sigma(1))\geq
w(\sigma(j),\sigma(1))=-\omega(j)^+,
$$
implying $\omega(j)^+\geq \omega(j')^+$.
\end{proof}  
The coordinates of $\omega$ representing ceiling hyperplanes are
underlined in Figure~\ref{fig:ish}. For example, the region whose code
is $(132,011)$ has the ceiling hyperplane $x_3-x_1=\omega(2)^+=2$. Note
that $\omega(3)=\omega(2)=1$ but $\sigma(3)=2$, hence $\omega(3)=1$ is
the largest element of $\{0,1\}$ and $\omega(3)^+=\infty$. 
Proposition~\ref{prop:ceilingf} takes a particularly nice form for
the dominant regions in certain contiguous integral generalized Ish
arrangements. 
\begin{proposition}
Let $(12\cdots n,\omega)$ be the code of a dominant region in a
contiguous integral generalized  
Ish arrangement in $V_{n-1}$ satisfying $\beta(1,2)\leq \beta(1,3)\leq
\cdots \leq \beta(1,n)$. Then
$\omega(j)$ represents a ceiling hyperplane if and only if
$\omega(j)< \beta(1,j)$ and either $j=n$ or $\omega(j)<\omega(j+1)$ hold. 
\end{proposition}  
\begin{proof}
The condition $\omega(j)< \beta(1,j)$ is equivalent to $\omega(j)^+\neq
\infty$. If this holds then $\omega(j)^+=\omega(j)+1$. For all $j'>j$ we
have $\beta(1,j)\leq \beta(1,j')$, hence $\omega(j)^+\geq \omega(j')^+$
is equivalent to $\omega(j)+1\geq \omega(j')+1$, that is $\omega(j)\geq
\omega(j')$. As a consequence of Remark~\ref{rem:contintv}, a valid code must
also satisfy $\omega(1)\leq \omega(2)\leq \omega(n)$ in this
case. Therefore $\omega(j)\geq \omega(j')$ for some $j'>j$ is equivalent
to $\omega(j)=\omega(j+1)=\cdots=\omega(j')$ for some $j'>j$. 
\end{proof}  
\begin{corollary}
For the code $(12\ldots n,\omega)$ of a dominant region in the
Ish arrangement, ceiling hyperplanes correspond to those coordinates
$\omega(j)$ for which $\omega(j)\leq j-2$ and either $j=n$ or
$\omega(j)<\omega(j+1)$ hold.  
\end{corollary}

\section{The $a$-Catalan arrangements}
\label{sec:FC}

In this section we discuss two labelings of the regions of the
$a$-Catalan arrangement $\mathcal{A}^{a,a}_{n-1}$ for $a\geq 1$. First
we point out that Bernardi's {\em annotated
  sketches}~\cite[Section~8.1]{Bernardi} 
also arise as special instances of the generalized Athanasiadis-Linusson
diagrams defined in Section~\ref{sec:eShi}. The second labeling provides
a simple direct definition of the weighted digraphs using labeled $a$-Catalan
paths. These labelings are different from each other and also from the
labeling of Duarte and Guedes de Oliveira~\cite{Duarte-Oliveira} which
encode the Pak-Stanley labeling using labeled rational Dyck paths. It
seems an interesting question for future research to find direct
bijections between these labelings. 

The structure of the Athanasiadis-Linusson diagrams is the simplest
possible for the $a$-Catalan arrangement $\mathcal{A}^{a,a}_{n-1}$ for
$a\geq 1$. In this case 
$\beta(i,j)=a-1=\beta(j)$ holds for all $i\neq j$ and we may omit step
(3) in constructing the diagrams. 
\begin{figure}[h]
\centering
\input{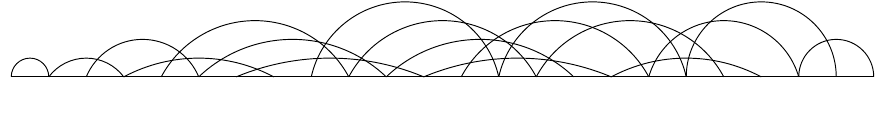_t}
\caption{Athanasiadis-Linusson diagram of a region in $\mathcal{A}^{4,4}_5$}
\label{fig:alinfc}
\end{figure}  
Each continuous component consist of all $a$ copies of a single
$j\in\{1,2,\ldots,n\}$ as shown in Figure~\ref{fig:alinfc}. In this
figure we have $x_1>x_4>x_2>x_6>x_3>x_5$, but we can freely permute the labels
as these components can never be ``glued''
together. By the same reason, the diagram may be reconstructed from the
{\em Athanasiadis-Linusson word} 
$1141246\underline{1}3246532\underline{4}653\underline{2}\underline{6}5\underline{3}\underline{5}$
of the labels in the diagram. (It is helpful but not necessary to
underline the last appearance of each label.)
After fixing the order $x_{\sigma(1)}>x_{\sigma(2)}>\cdots
>x_{\sigma(n)}$, the parking trees associated to these diagrams are in
bijection with the rooted incomplete $a$-ary trees on $(a-1)n+1$
vertices. Their number is the $a$-Catalan number
$\frac{1}{(a-1)n+1}\binom{an}{n}$. Multiplying it with $n!$, which is the number
of possible choices of $\sigma$, we get 
\begin{equation}
\label{eq:FCregions}  
r(\mathcal{A}^{a,a}_{n-1})=an(an-1)\cdots ((a-1)n+2),
\end{equation}
which is the same as~\cite[Corollary 9.2]{Postnikov-Stanley}.
\begin{remark}
{\em    The Athanasiadis-Linusson words defined above are bijectively
    equivalent to the {\em annotated $m$-sketches} defined by
    Bernardi~\cite[Section~8.1]{Bernardi} as follows: we set $m=a$, replace the
    last (underlined) appearance of each $i$ with $\beta_{i}$ and all
    other appearances of the letter $i$ with $\alpha_{i}$. The careful
    reader will note that Bernardi's symbols represent real numbers that
    increase left to right, whereas here we follow the convention of the
    Athanasiadis-Linusson diagrams: the represented real numbers
    increase right to left. Bernardi states and 
    proves an explicit description of the annotated $m$-sketches and
    reduces the problem of counting them to counting $m$-parenthesis
    systems whose number is known to be an $m$-Catalan number. Above we
    relate essentially the same words to the parking trees defined in
    Section~\ref{sec:eShi}, and note that the same counting result may
    also be attained by counting rooted incomplete $a$-ary trees
    instead.}  
\end{remark}   

We may also represent the regions of the $a$-Catalan arrangement
$\mathcal{A}^{a,a}_{n-1}$ using all pairs $(\pi,\Lambda)$, where 
$\pi$ is a permutation of the set $\{1,2,\ldots,n\}$ and $\Lambda$ is an
{\em $a$-Catalan path} with $n$ up steps. 
\begin{definition}
An {\em $a$-Catalan path} with $n$ up steps  is a lattice
path containing $n$ up steps $(1,(a-1))$ and $(a-1)n$ down steps $(1,-1)$
from $(0,0)$ to $(an,0)$ that stays weakly above the horizontal
axis.
\end{definition}
It is well-known that the number of $a$-Catalan paths with $n$ up steps is
the $a$-Catalan number
$\frac{1}{(a-1)n+1}\binom{an}{n}$~\cite[Eq. (7.67)]{Graham-Knuth-Patashnik}.
This representation is easier to visualize but somewhat
mysterious at this time. We use the permutation $\pi$ to number the up
steps of $\Lambda$ from 
left to right. In the example shown in Figure~\ref{fig:alinfcp} the
permutation $\pi$ is simply $123456$, but we may reuse the same lattice
path with any other permutation. As it was also the case for the
Athanasiadis-Linusson diagrams, the set of valid representations is
invariant under permuting the labels.

\begin{figure}[h]
\centering
\input{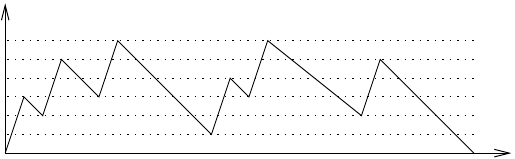_t}
\caption{$a$-Catalan path corresponding to the Athanasiadis-Linusson
  diagram shown in Figure~\ref{fig:alinfc}}
\label{fig:alinfcp}
\end{figure}  
We identify the $i$th up step from the right with the variable
$x_{\pi(i)}$ and denote the level of the lower end of the up step with
$\ell(\pi(i))$. We use this {\em level function} $\ell:
\{1,2,\ldots,n\}\rightarrow \mathbb{N}$ to define the weight function,
as follows.
\begin{equation}
\label{eq:wdef}  
  w(\pi(i),\pi(j))=
  \begin{cases}
\ell(\pi(j))-\ell(\pi(i)), &\mbox{if $\ell(\pi(j))-\ell(\pi(i))\in
  [1-a,a-1]$;}\\
-\infty, &\mbox{if $\ell(\pi(j))-\ell(\pi(i))<1-a$;}\\
a-1, &\mbox{if $\ell(\pi(j))-\ell(\pi(i))>a-1$.}\\
  \end{cases}  
\end{equation}
Recall that the notation $w(\pi(i),\pi(j))=-\infty$ indicates the case
when there is no arrow $\pi(i)\rightarrow \pi(j)$ in the corresponding
weighted digraph, only an arrow $\pi(i)\leftarrow \pi(j)$ of weight
$a-1$. Equation~\eqref{eq:wdef} defines exactly one of $w(i,j)$ and
$w(j,i)$ for each pair $\{i,j\}$, we may uniquely extend this definition
to all ordered pairs using the rule
\begin{equation}
\label{eq:wdef2}  
  w(i,j)=
  \begin{cases}
-1-w(j,i), &\mbox{if $w(j,i)\in
  [1-a,a-2]$;}\\
-\infty, &\mbox{if $w(j,i)=a-1$;}\\
a-1, &\mbox{if $w(j,i)=-\infty$.}\\
  \end{cases}  
\end{equation}
Clearly we obtain the weight function of a valid weighted digraph.
\begin{proposition}
Equations~\eqref{eq:wdef} and~\eqref{eq:wdef2} define an $m$-acyclic
weighted digraph. 
\end{proposition}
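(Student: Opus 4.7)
The plan is to prove a stronger statement than strictly required: every directed cycle in the weighted digraph defined by~\eqref{eq:wdef} and~\eqref{eq:wdef2} has strictly negative total weight. This bypasses even the reduction to cycles of length at most three furnished by Theorem~\ref{thm:a-bleq1}.

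The key step is a uniform per-edge upper bound in terms of the level function $\ell$: for all $i, j \in \{1, 2, \ldots, n\}$,
$$w(\pi(i), \pi(j)) \leq \ell(\pi(j)) - \ell(\pi(i)) \quad \text{if } i < j,$$
$$w(\pi(i), \pi(j)) \leq \ell(\pi(j)) - \ell(\pi(i)) - 1 \quad \text{if } i > j,$$
where we agree that $-\infty$ satisfies every such inequality. The case $i < j$ is immediate from~\eqref{eq:wdef}: the assigned value either equals $\ell(\pi(j)) - \ell(\pi(i))$ when this lies in $[1-a, a-1]$, is clipped down to $a-1$ when the difference exceeds $a-1$, or is $-\infty$ when the difference is less than $1-a$; in each of the three cases the inequality holds. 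The case $i > j$ reduces to a four-way check: one first applies~\eqref{eq:wdef} to the pair $(\pi(j),\pi(i))$ and then reads off $w(\pi(i),\pi(j))$ from~\eqref{eq:wdef2}. When $\ell(\pi(i)) - \ell(\pi(j)) \in [1-a, a-2]$ one obtains exactly $w(\pi(i),\pi(j)) = \ell(\pi(j)) - \ell(\pi(i)) - 1$; when $\ell(\pi(i)) - \ell(\pi(j)) \geq a-1$ the weight collapses to $-\infty$; and when $\ell(\pi(i)) - \ell(\pi(j)) < 1-a$ the weight equals $a-1$, which is bounded by $\ell(\pi(j)) - \ell(\pi(i)) - 1$ thanks to the integrality of the levels.

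With this inequality in hand, consider an arbitrary directed cycle $\pi(i_1) \to \pi(i_2) \to \cdots \to \pi(i_m) \to \pi(i_1)$. If any edge on the cycle carries weight $-\infty$, the total is $-\infty$ and therefore negative. Otherwise, summing the per-edge bounds cyclically (indices taken modulo $m$), the $\ell$-differences telescope to zero and one obtains
$$w(\pi(i_1), \pi(i_2), \ldots, \pi(i_m)) \leq -d,$$
where $d$ is the number of cyclic descents $\{s : i_s > i_{s+1}\}$ in the index sequence. Since $i_1, i_2, \ldots, i_m, i_1$ is a closed sequence of positive integers, not every consecutive pair can be an ascent, forcing $d \geq 1$. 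Hence the total weight is at most $-1 < 0$, so the weighted digraph contains no $m$-ascending cycle.

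The only (minor) delicate point is the boundary case $\ell(\pi(i)) - \ell(\pi(j)) = a-1$ in the $i > j$ analysis, where $w(\pi(i), \pi(j))$ collapses to $-\infty$ rather than to a finite value; but since $-\infty$ satisfies every upper bound, no difficulty arises, and the telescoping argument still yields a strictly negative bound whenever any cyclic descent occurs.
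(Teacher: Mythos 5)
Your proof is correct and takes essentially the same route as the paper: bound each edge weight by the corresponding difference of levels (with an extra $-1$ on descents of the index sequence), telescope around the cycle to get a nonpositive total, and use the existence of at least one cyclic descent to force strict negativity. Your write-up is a bit more explicit than the paper's about the clipped and $-\infty$ boundary subcases of \eqref{eq:wdef2}, but the argument is the same.
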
  
\begin{proof}
Consider a directed cycle $(\pi(i_1),\pi(i,2),\ldots,\pi(i_k)))$. In the
proof we add the indices modulo $k$, that is we set $i_{k+1}=i_1$.
For $i_s<i_{s+1}$ we have $w(\pi(i_s),\pi(i_{s+1})\leq
\ell(\pi(i_{s+1}))-\ell(\pi(i_{s+1}))$. For $i_s>i_{s+1}$ we must have
$w(\pi(i_{s+1}),\pi(i_s))<a-1$ otherwise
$w(\pi(i_{s}),\pi(i_{s+1}))=-\infty$ holds. Hence 
we have
\begin{align*}
w(\pi(i_s),\pi(i_{s+1})&=-1-w(\pi(i_{s+1}),\pi(i_s))
=-1-(\ell(\pi(i_{s}))-\ell(\pi(i_{s+1}))\\
&<\ell(\pi(i_{s+1})-\ell(\pi(i_{s}).
\end{align*}
To summarize $w(\pi(i_s),\pi(i_{s+1})\leq
\ell(\pi(i_{s+1})-\ell(\pi(i_{s})$ always holds, hence the total weight
of the cycle is at most $\sum_{j=1}^k
(\ell(\pi(i_{j+1})-\ell(\pi(i_j))=0$. The sum is strictly negative
because there is at least one $s$ such that $i_s>i_{s+1}$ holds.
\end{proof}  
\begin{definition}
We call the $m$-acyclic valid weighted digraph associated to   
$(\pi,\Lambda)$ by the Equations~\eqref{eq:wdef}
and~\eqref{eq:wdef2} the {\em weighted digraph encoded by $(\pi,\Lambda)$}.
\end{definition}  

\begin{lemma}
\label{lem:sigma} 
Consider the weighted digraph encoded by 
$(\pi,\Lambda)$. The total order of gains $\sigma=\gamma\circ\pi$
is the order of the labels $\pi(1),\ldots,\pi(n)$ in increasing order of
their levels, where $\pi(i)$ is listed before $\pi(j)$ if
$\ell(\pi(i))=\ell(\pi(j))$ and $i<j$ hold.
\end{lemma}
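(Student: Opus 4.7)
The plan is to invoke uniqueness of the total order of gains: as established at the start of Section~\ref{sec:eShi}, $\sigma$ is the unique permutation such that $w(\sigma(s),\sigma(t)) \ge 0$ for all $s<t$. It therefore suffices to verify that the level-then-position reordering of $\pi(1),\ldots,\pi(n)$ described in the lemma satisfies this defining inequality, and then to appeal to uniqueness.

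The first step is to derive from \eqref{eq:wdef} and \eqref{eq:wdef2} an explicit criterion for when $w(\pi(i),\pi(j)) \ge 0$. When $i<j$, equation \eqref{eq:wdef} applies directly: reading off its three pieces one finds $w(\pi(i),\pi(j)) \ge 0$ exactly when $\ell(\pi(j)) \ge \ell(\pi(i))$. When $i>j$, I instead apply \eqref{eq:wdef2} to the value $w(\pi(j),\pi(i))$ furnished by \eqref{eq:wdef}; the resulting piecewise formula for $w(\pi(i),\pi(j))$ is $a-1$ (large negative level gap), $\ell(\pi(j))-\ell(\pi(i))-1$ (middle range), and $-\infty$ (large positive level gap). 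A short inspection then shows that in this $i>j$ case $w(\pi(i),\pi(j)) \ge 0$ requires the \emph{strict} inequality $\ell(\pi(j)) > \ell(\pi(i))$.

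Combining the two cases gives the clean criterion
\[
w(\pi(i),\pi(j)) \ge 0 \iff \ell(\pi(i)) < \ell(\pi(j)),\ \text{or}\ \ell(\pi(i)) = \ell(\pi(j)) \text{ with } i<j,
\]
which is exactly the ``precedes'' relation of the level-then-position ordering described in the lemma. Hence that ordering satisfies the defining inequality characterizing $\sigma$, and uniqueness closes the argument.

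The main obstacle I anticipate is the bookkeeping for the boundary case $\ell(\pi(i)) = \ell(\pi(j))$: the asymmetric $-1$ offset appearing in \eqref{eq:wdef2} is precisely what forces $w(\pi(i),\pi(j)) = 0$ while $w(\pi(j),\pi(i)) = -1$ when $i<j$, and this is the mechanism that aligns the lemma's tie-breaking rule with the one implicit in the weighted digraph. Once that asymmetry is tracked, together with the two extremal level-gap cases where the weight saturates at $a-1$ or collapses to $-\infty$, the remainder of the proof is a routine case check.
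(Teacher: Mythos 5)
Your proposal is correct and follows essentially the same route as the paper, which likewise notes that the claimed level-then-position order satisfies $w(\sigma(i),\sigma(j))\geq 0$ for all $i<j$ as a consequence of Equation~\eqref{eq:wdef} (together with~\eqref{eq:wdef2}) and then appeals to the uniqueness of the permutation with this property for a separated arrangement. Your version simply spells out the case analysis, including the tie-breaking via the $-1$ offset, which the paper leaves as ``immediate.''
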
  
For example, for the labeled lattice path shown in
Figure~\ref{fig:alinfcp}  we get $\sigma=142635$. It is an immediate
consequence of  Equation~\eqref{eq:wdef}
that $w(\sigma(i),\sigma(j))\geq 0$ holds for all $i<j$, and there is
exactly one permutation with this property. 

\begin{proposition}
\label{prop:gl}
For the weighted digraph encoded by $(\pi,\Lambda)$ the gain function is
the level function: we have $g(\sigma(i))=\ell(\sigma(i))$.
\end{proposition}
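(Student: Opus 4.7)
The plan is to prove equality by showing both $g(\sigma(i))\le \ell(\sigma(i))$ and $g(\sigma(i))\ge \ell(\sigma(i))$. A preliminary observation from Lemma~\ref{lem:sigma} is that $\sigma(1)=\pi(1)$ is the bottom of the leftmost up step of $\Lambda$ and therefore $\ell(\sigma(1))=0$, which matches the normalization $g(\sigma(1))=0$ and lets one rephrase both bounds as statements about $\ell(\sigma(i))-\ell(\sigma(1))$.

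For the upper bound I would first verify the edge-wise inequality $w(u,v)\le \ell(v)-\ell(u)$ for every directed edge $u\to v$ of the weighted digraph encoded by $(\pi,\Lambda)$, and then telescope it along any directed path $\sigma(1)=k_0\to k_1\to\cdots\to k_m=\sigma(i)$. The inequality reduces to a short case analysis. When $\pi^{-1}(u)<\pi^{-1}(v)$, formula~\eqref{eq:wdef} gives $w(u,v)\in\{\ell(v)-\ell(u),\,a-1,\,-\infty\}$, each bounded by $\ell(v)-\ell(u)$ in the appropriate subcase. When $\pi^{-1}(u)>\pi^{-1}(v)$, applying~\eqref{eq:wdef} to $w(v,u)$ and then~\eqref{eq:wdef2} gives $w(u,v)\in\{\ell(v)-\ell(u)-1,\,-\infty,\,a-1\}$, with $a-1$ only arising when $\ell(v)-\ell(u)\ge a$, so the bound again holds.

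For the lower bound I would exhibit a sequence of up steps $u_0,u_1,\ldots,u_k$ appearing in this order in $\Lambda$ (so $\pi^{-1}(u_0)<\cdots<\pi^{-1}(u_k)$) with $u_0=\sigma(1)$, $u_k=\sigma(i)$, and consecutive level jumps $\ell(u_{t+1})-\ell(u_t)\in[0,a-1]$. By the equality subcase of~\eqref{eq:wdef}, each edge $u_t\to u_{t+1}$ then has weight exactly $\ell(u_{t+1})-\ell(u_t)$, and the total weight telescopes to $\ell(\sigma(i))$. The existence of such a sequence rests on the key geometric lemma: every up step $u$ at level $y>0$ is preceded in $\Lambda$ by some up step $u'$ with $\ell(u')\in[\max(0,y-a+1),\,y]$. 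My proof of this lemma takes $u'$ to be the latest up step preceding $u$ with $\ell(u')\le y$, which exists since $\pi(1)$ has level $0$, and then shows $\ell(u')\ge y-a+1$: just after $u'$ the path sits at level $\ell(u')+a-1$, and were this below $y$, reaching $u$'s bottom level $y$ would require an intermediate up step, but every intermediate up step has bottom level $>y$ by maximality of $u'$ and cannot be reached from a level $<y$ without an earlier up step already violating maximality.

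Iterating the lemma backwards from $\sigma(i)$ produces up steps with strictly decreasing lattice-path indices, so the recursion terminates at some level-$0$ step; if this step is not $\pi(1)$, I prepend $\pi(1)=\sigma(1)$ using a weight-$0$ edge available by~\eqref{eq:wdef}. The hard part is precisely this geometric lemma, since one cannot simply follow the $\sigma$-consecutive edges: in the example $a=3$, $\pi=123$, $\Lambda=UUDDDUDDD$, the $\sigma$-path $1\to 3\to 2$ has total weight $1<2=\ell(\sigma(3))$ while the direct edge $1\to 2$ attains weight $2$. The rerouting through $\pi$-order is essential to exploit the subcase of~\eqref{eq:wdef} in which the weight realizes the full level difference.
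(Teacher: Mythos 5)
Your argument is correct, but it is organized quite differently from the paper's. The paper proves the statement by induction on the position in the total order of gains $\sigma$: for the upper bound it invokes Theorem~\ref{thm:posweight} (applicable because $\mathcal{A}^{a,a}_{n-1}$ with $a\geq 1$ is separated, integral and satisfies the weak triangle inequality) to write $g(\sigma(j))=g(\sigma(j'))+w(\sigma(j'),\sigma(j))$ for some earlier $j'$ and then applies the induction hypothesis, and for the lower bound it uses exactly your geometric fact---some earlier up step has level in $[\ell(\sigma(j))-a+1,\ell(\sigma(j))]$---in a single inductive step, justifying that fact only by a brief remark about consecutive up steps. You avoid both the induction and Theorem~\ref{thm:posweight}: your upper bound comes from telescoping the edge-wise inequality $w(u,v)\leq \ell(v)-\ell(u)$ (the same inequality, with the same case analysis of \eqref{eq:wdef} and \eqref{eq:wdef2}, that drives the paper's preceding $m$-acyclicity proposition) along an arbitrary directed path starting at $\sigma(1)$, and your lower bound comes from an explicitly constructed path of weight $\ell(\sigma(i))$, obtained by iterating your ``latest preceding up step at level at most $y$'' lemma and, if necessary, prepending the weight-zero edge from $\pi(1)=\sigma(1)$; the strictly decreasing path indices guarantee termination and that the walk is a genuine path. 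What your route buys is self-containedness and rigor precisely where the paper is sketchiest: you actually prove the geometric lemma, and your example with $a=3$ and the path $UUDDDUDDD$ correctly illustrates why one cannot simply follow $\sigma$-consecutive edges. What the paper's route buys is brevity, since Theorem~\ref{thm:posweight} and the greedy recursion \eqref{eq:grec} are already in place from Section~\ref{sec:separated}. Both proofs ultimately rest on the same two ingredients, so the difference is structural rather than substantive, but yours is the more elementary and complete write-up.
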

\begin{proof}
We proceed by induction on $i$. For $i=1$ we have
$g(\sigma(1))=\ell(\sigma(1))=0$. Assume that
$g(\sigma(i))=\ell(\sigma(i))$ holds for all $i<j$ and let us compute
$g(\sigma(j))$. By Theorem~\ref{thm:posweight} there is a $j'<j$
satisfying $g(\sigma(j))=g(\sigma(j'))+w(\sigma(j'),\sigma(j))$. By the
induction hypothesis $g(\sigma(j'))=\ell(\sigma(j'))$ holds, hence we have
$$
g(\sigma(j))=\ell(\sigma(j'))+w(\sigma(j'),\sigma(j))\leq
\ell(\sigma(j'))+\ell(\sigma(j))-\ell(\sigma(j'))=\ell(\sigma(j)). 
$$
On the other hand, in an $a$-Catalan path, between two consecutive 
up steps the level increases by at most $a-1$ (it may also decrease all
the way to zero). If we project all up
steps to the horizontal axis, we obtain a set of intervals whose union
is an interval of integers. Therefore, between any two up steps there is
such a sequence of up steps that the level function can increase by at
most $a-1$ between two subsequent up steps $(1,a-1)$, hence there is at
least one up step labeled with $\sigma(j'')$ preceding the up step labeled
with $\sigma(j)$ whose level is in the interval
$[-a+1+\ell(\sigma(j)),\ell(\sigma(j))]$. By 
Lemma~\ref{lem:sigma} $j''<j$ holds, hence by Equation~\eqref{eq:wdef} we have
$w(\sigma(j''),\sigma(j))=\ell(\sigma(j))-\ell(\sigma(j''))$. Combining
this with the induction hypothesis we also obtain
$$
g(\sigma(j))\geq g(\sigma(j''))+w(\sigma(j''),\sigma(j))=
\ell(\sigma(j''))+\ell(\sigma(j))-\ell(\sigma(j''))=\ell(\sigma(j)). 
$$
\end{proof}  

\begin{theorem}
The correspondence between the pairs $(\pi,\Lambda)$ and the valid
weighted $m$-acyclic digraphs encoded by them is a bijection.
\end{theorem}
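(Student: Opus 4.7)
My plan is to combine a cardinality count with an injectivity argument. By Corollary~\ref{cor:bijection}, the valid $m$-acyclic weighted digraphs on $\{1,\ldots,n\}$ are in bijection with the regions of $\mathcal{A}^{a,a}_{n-1}$, of which there are $an(an-1)\cdots((a-1)n+2)=(an)!/((a-1)n+1)!$ by~\eqref{eq:FCregions}. Since the number of $a$-Catalan paths with $n$ up steps equals $\frac{1}{(a-1)n+1}\binom{an}{n}$, multiplying by $n!$ gives the same total, so the number of pairs $(\pi,\Lambda)$ coincides with the number of valid $m$-acyclic weighted digraphs. The preceding proposition confirms that the map $(\pi,\Lambda)\mapsto D_{\pi,\Lambda}$ takes values in this set, so it remains only to verify injectivity.

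Suppose $(\pi_1,\Lambda_1)$ and $(\pi_2,\Lambda_2)$ encode the same digraph $D$. By Proposition~\ref{prop:gl} the gain function $g$ of $D$ equals both level functions, so $\ell_1=\ell_2=:\ell$. Since each $\Lambda$ is recovered from $\lambda_k=\ell(\pi(k))$, it suffices to show $\pi_1=\pi_2$, which I will do by induction on $i$. Lemma~\ref{lem:sigma} gives $\pi_1(1)=\sigma(1)=\pi_2(1)$.

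For the inductive step, assume $\pi_1(k)=\pi_2(k)$ for $k<i$, write $u=\pi_1(i)$ and $v=\pi_2(i)$, and suppose toward contradiction that $u\neq v$. A direct inspection of~\eqref{eq:wdef} and~\eqref{eq:wdef2} reveals that whenever $|\ell(u)-\ell(v)|\leq a-1$, the values $w(u,v)$ and $w(v,u)$ distinguish the relative order of $u$ and $v$ in $\pi$; even at the boundary $|\ell(u)-\ell(v)|=a-1$, one order yields $w(u,v)=a-1,\, w(v,u)=-\infty$ while the other yields $w(u,v)=a-2,\, w(v,u)=1-a$. Agreement of the two digraphs therefore forces $|\ell(u)-\ell(v)|\geq a$, and without loss of generality $\ell(v)\geq \ell(u)+a$. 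This is the main obstacle, since in this range the pair $\{u,v\}$ alone cannot distinguish which ordering produced $D$.

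I plan to overcome it by locating an intermediate up step whose label is distinguished by $D$. In $\pi_1$ the label $v$ sits at some position $j>i$. Let $k^*$ be the smallest index in $\{i+1,\ldots,j\}$ with $\lambda^{(1)}_{k^*}\geq \ell(v)-(a-1)$, which exists because $\lambda^{(1)}_j=\ell(v)$; since $\lambda^{(1)}_i=\ell(u)<\ell(v)-(a-1)$, one has $k^*\geq i+1$, and the $a$-Catalan bound $\lambda^{(1)}_{k^*}\leq \lambda^{(1)}_{k^*-1}+(a-1)<\ell(v)$ together with integrality of levels yields $\lambda^{(1)}_{k^*}\in[\ell(v)-(a-1),\ell(v)-1]$ and $k^*<j$. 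The label $y=\pi_1(k^*)$ then satisfies $\ell(v)-\ell(y)\in[1,a-1]$, so the pair $(v,y)$ lies in the range where the digraph detects order. The level inequalities rule out $y\in\{u,v\}$, and the inductive hypothesis ensures $y$ occupies no position $<i$ in $\pi_2$; since $\pi_2(i)=v\neq y$, the label $y$ sits at some position $>i$ in $\pi_2$. Consequently $\pi_1$ places $y$ to the left of $v$ while $\pi_2$ places $v$ to the left of $y$, producing different values of $w(v,y)$ under~\eqref{eq:wdef}--\eqref{eq:wdef2} and contradicting the assumption that both pairs encode $D$. This finishes the induction and the proof.
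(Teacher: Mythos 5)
Your proposal is correct and follows essentially the same route as the paper: reduce to injectivity via the count $n!\cdot\frac{1}{(a-1)n+1}\binom{an}{n}=r(\mathcal{A}^{a,a}_{n-1})$, recover the levels from the gain function (Proposition~\ref{prop:gl}) and the same-level order from $\sigma$ (Lemma~\ref{lem:sigma}), and note that the weights determine the relative order of any two up steps whose levels differ by at most $a-1$. Your explicit induction with the intermediate witness $y$ (found via the bound $\lambda_{k^*}\leq\lambda_{k^*-1}+(a-1)$) is just a more detailed rendering of the paper's final sentence, which bridges large level gaps the same way.
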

\begin{proof}
It suffices to show that the correspondence is injective: by
\eqref{eq:FCregions} the number of regions (and of valid $m$-acyclic
weighted digraphs) is the same as the number of labeled $a$-Catalan paths. 
Consider a labeled $a$-Catalan path $(\pi,\Lambda)$ and the weighted
digraph (described by the weight function $w(i,j)$) it encodes. By the
results of Section~\ref{sec:separated}, we can compute the total order
of gains $\sigma(i)$ and the gain function $g(i)$ from the function
$w(i,j)$. By Lemma~\ref{lem:sigma} and Proposition~\ref{prop:gl} the
gain function is the same as the level function and $\sigma$ lists the
up steps level by level, and left to right within the same level. Using
this information we can determine the level of each labeled up step, and
also the relative order of two up steps at the same level. Consider
finally a pair of up steps, labeled with $i$ and $j$, respectively,
whose level is different. If $|\ell(i)-\ell(j)|\leq a-1$ holds then, by
Equations~\eqref{eq:wdef} and ~\eqref{eq:wdef2},
$w(i,j)=\ell(j)-\ell(i)$ holds exactly when the up step labeled $i$
precedes the up step labeled $j$ in $\Lambda$. As noted in the proof of
Proposition~\ref{prop:gl}, the difference of levels between any two
subsequent up steps in the sequence is at most $a-1$. As a consequence,
we can determine the relative position of any two labeled up steps whose
level is different. 
\end{proof}  

It is not hard to create the Athanasiadis-Linusson diagram of a region
given by its labeled $a$-Catalan path $(\pi,\Lambda)$: the weight
function is easily computed, the linear order of gains $\sigma$ can be
read off the labeled lattice path, and a table such as
Table~\ref{tab:alinfcw} is easily created. Using the table of the weight
function, we can insert the letters into the Athanasiadis-Linusson word
in the order of $\sigma$. Finding the inverse of this map appears to be
a much more difficult task.

\begin{table}[h]
\begin{center}
  \begin{tabular}{c|ccccccccccc}
$w(\sigma(i),\sigma(i+5))$  &     &     &     &  &     & $3$ &
    &&&&\\
$w(\sigma(i),\sigma(i+4))$  &     &     &     &     & $3$ &   &$2$ &&&&\\     
$w(\sigma(i),\sigma(i+3))$  &     &     &     & $2$ &     & $1$ & &$1$&&&\\         
$w(\sigma(i),\sigma(i+2))$  &     &     & $2$ &     & $1$ &   &$1$ &&$0$&&\\     
$w(\sigma(i),\sigma(i+1))$  &     & $1$ &     & $0$ &     & $0$ & &$0$&&$0$&\\     
\hline
vertices $\sigma(1)\sigma(2)\cdots\sigma(6)$
                            & $1$ &     & $4$ &     & $2$ & & $6$& &$3$&
& $5$\\ 
\hline
$g(\sigma(i))$              & $0$ &     & $1$ &     & $2$  &  & $2$ &&$3$&&$3$\\     
\end{tabular}
\end{center}
\caption{Weight function table associated to the Athanasiadis-Linusson
  diagram shown in Figure~\ref{fig:alinfc}}
\label{tab:alinfcw}
\end{table}
We conclude this section with a description of the bounded regions.

\begin{proposition}
\label{prop:fcbounded}  
A region of $\mathcal{A}^{a,a}_{n-1}$ is bounded if and only if the
total order of gains $\sigma$ satisfies $w(\sigma(i),\sigma(i+1))<a-1$
for $1\leq i\leq n-1$. 
\end{proposition}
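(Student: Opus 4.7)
The plan is to invoke Theorem~\ref{thm:br}: since the associated weighted digraph $D$ of any nonempty region is automatically $m$-acyclic, boundedness is equivalent to strong connectivity of $D$. So the task reduces to translating the condition $w(\sigma(i),\sigma(i+1))<a-1$ for all $i$ into strong connectivity of $D$.

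The key structural observation, specific to the $a$-Catalan arrangement where $\beta(i,j)=a-1$ for every pair, is that by Proposition~\ref{prop:validt} the reverse directed edge $\sigma(i+1)\rightarrow \sigma(i)$ is present in $D$ exactly when $w(\sigma(i),\sigma(i+1))\in[0,a-2]$, and is absent exactly when $w(\sigma(i),\sigma(i+1))=a-1$; the forward edge $\sigma(i)\rightarrow\sigma(i+1)$ is always present by definition of the total order of gains. With this in hand, the ``if'' direction is immediate: if $w(\sigma(i),\sigma(i+1))<a-1$ for every $i$, then all consecutive backward edges exist, so from any $\sigma(j)$ I can reach any $\sigma(k)$ by walking one step at a time along the $\sigma$-sequence in the appropriate direction, making $D$ strongly connected.

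For the ``only if'' direction I will argue by contrapositive. Assuming $w(\sigma(i_0),\sigma(i_0+1))=a-1$ for some $i_0$, I plan to show that $V_1=\{\sigma(1),\ldots,\sigma(i_0)\}$ is a source set, meaning no edge of $D$ points from its complement $V_2=\{\sigma(i_0+1),\ldots,\sigma(n)\}$ into $V_1$. Choose any representative point $\underline{x}=(x_1,\ldots,x_n)$ of the region; the total order of gains forces $x_{\sigma(1)}>x_{\sigma(2)}>\cdots>x_{\sigma(n)}$, while the assumption $w(\sigma(i_0),\sigma(i_0+1))=a-1$ unfolds to $x_{\sigma(i_0)}-x_{\sigma(i_0+1)}>a-1$. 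Telescoping yields, for any $k\leq i_0<l$,
$$
x_{\sigma(k)}-x_{\sigma(l)}=\sum_{m=k}^{l-1}\bigl(x_{\sigma(m)}-x_{\sigma(m+1)}\bigr)\geq x_{\sigma(i_0)}-x_{\sigma(i_0+1)}>a-1,
$$
since the remaining summands are positive. The bin-to-weight correspondence then forces $w(\sigma(k),\sigma(l))=a-1$ and hence the absence of the edge $\sigma(l)\rightarrow\sigma(k)$ in $D$, so $V_1$ is indeed a source; Theorem~\ref{thm:br} then yields unboundedness.

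The only delicate step is to verify that the equivalence ``$w(i,j)=a-1$ if and only if $x_i-x_j>a-1$'' holds uniformly whether $i<j$ or $i>j$ as integers; this is a direct unfolding of the bin-to-weight rules encoded in \eqref{eq:wdef2} and Proposition~\ref{prop:validt}, using that $\alpha(i,j)=1-a$ and $\beta(i,j)=a-1$ throughout the $a$-Catalan arrangement. A purely combinatorial alternative to the geometric telescoping above would be to iterate the inequality $w(\sigma(k),\sigma(l))\geq\min(a-1,w(\sigma(k),\sigma(m))+w(\sigma(m),\sigma(l)))$ (derivable from the $m$-acyclic property exactly as in the extended Shi discussion) along the path $\sigma(k)\rightarrow\sigma(k+1)\rightarrow\cdots\rightarrow\sigma(l)$; the $a-1$ contribution from the step at $i_0$ saturates the minimum and forces $w(\sigma(k),\sigma(l))=a-1$.
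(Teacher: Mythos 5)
Your proof is correct, and its skeleton coincides with the paper's: both reduce the statement to strong connectivity via Theorem~\ref{thm:br}, both get the ``if'' direction from the observation that $w(\sigma(i),\sigma(i+1))<a-1$ puts both directed edges between $\sigma(i)$ and $\sigma(i+1)$ into the digraph, and both prove the ``only if'' direction by exhibiting the directed cut $\{\sigma(1),\ldots,\sigma(i_0)\}$ versus $\{\sigma(i_0+1),\ldots,\sigma(n)\}$ with no edges pointing back. The one point of divergence is how you establish $w(\sigma(k),\sigma(l))=a-1$ for all $k\leq i_0<l$: you do it geometrically, picking a representative point of the region, using $x_{\sigma(1)}>\cdots>x_{\sigma(n)}$ and telescoping the strict inequality $x_{\sigma(i_0)}-x_{\sigma(i_0+1)}>a-1$, and then reading the weights off the bins, whereas the paper stays inside the digraph and iterates the inequality $w(i_1,i_3)\geq\min\bigl(a-1,\,w(i_1,i_2)+w(i_2,i_3)\bigr)$ coming from the absence of $m$-ascending $3$-cycles (the analogue of \eqref{eq:wiklb}) along consecutive steps of $\sigma$ --- exactly the ``purely combinatorial alternative'' you sketch at the end. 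The geometric route is slightly more self-contained (it needs only the definition of the valid weighted digraph of a region, not the $3$-cycle inequality), while the paper's route keeps the whole argument at the level of weight functions, consistent with the rest of Sections~\ref{sec:eShi} and~\ref{sec:FC}; your only ``delicate step,'' the uniform equivalence $w(i,j)=a-1\Leftrightarrow x_i-x_j>a-1$ regardless of whether $i<j$ or $i>j$, is indeed immediate from Proposition~\ref{prop:validt} since $\alpha(i,j)=1-a$ and $\beta(i,j)=a-1$ for every ordered pair in the $a$-Catalan arrangement, so no gap remains.
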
  
\begin{proof}
By definition we have $w(\sigma(i),\sigma(i+1))\geq 0$ for $1\leq i\leq
n-1$. If $w(\sigma(i),\sigma(i+1))<a-1$ for $1\leq i\leq
n-1$, then the directed edges $\sigma(i)\rightarrow
\sigma(i+1)$ and $\sigma(i)\leftarrow \sigma(i+1)$ are both present, and
the associated weighted digraph is strongly connected. Assume now that
$w(\sigma(i_0),\sigma(i_0+1))=a-1$ holds for some $1\leq i_0\leq n-1$.  
In analogy to the case of the extended Shi arrangement, the fact that
there is no $m$-ascending cycle implies \eqref{eq:wiklb}, that is,
$$
w(i_1,i_3)\geq \min(a-1,w(i_1,i_2)+w(i_2,i_3))\quad\mbox{holds for
  $i_1<_{\sigma^{-1}}i_2<_{\sigma^{-1}}i_3$}.   
$$
Using this inequality it is easy to show that $w(i_1,i_2)=a-1$ holds for
$i_1\leq_{\sigma^{-1}}i<_{\sigma^{-1}}i_2$. In other words, there is no
directed edge from the set $\{i_2\::\: i<_{\sigma^{-1}} i_2\}$ into the set
$\{i_1\::\: i_1\leq_{\sigma^{-1}} i\}$. The associated weighted digraph
is not strongly connected. 
\end{proof}  
Using Lemma~\ref{lem:sigma}, Proposition~\ref{prop:fcbounded} may be
rephrased as follows.
\begin{corollary}
A region of $\mathcal{A}^{a,a}_{n-1}$, encoded by the labeled lattice
path $(\pi,\Lambda)$, is unbounded if and only there are two successive
levels $\ell$ and $\ell+a-1$ whose difference is $a-1$ and the last up
step of $\Lambda$ at level $\ell$ precedes the first up step at level
$\ell+a-1$. 
\end{corollary}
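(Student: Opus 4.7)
The plan is to combine Proposition~\ref{prop:fcbounded} with the explicit weight formula~\eqref{eq:wdef}--\eqref{eq:wdef2}. By that proposition, the region is unbounded if and only if $w(\sigma(i),\sigma(i+1))=a-1$ holds for some $i$. Lemma~\ref{lem:sigma} and Proposition~\ref{prop:gl} identify the gain function with the level function $\ell$ and describe $\sigma$ as the order that lists the up steps by increasing level, with ties broken left-to-right in $\Lambda$. So the task reduces to translating $w(\sigma(i),\sigma(i+1))=a-1$ into a geometric statement about $\Lambda$.

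The first step is a short gap lemma: since only down steps occur between two consecutive up steps, the levels $L_1,L_2,\ldots,L_n$ of the up steps in left-to-right order satisfy $L_{m+1}\leq L_m+(a-1)$. A short induction on the index of the first up step reaching a given level then shows that the set of used levels has no gap exceeding $a-1$; in particular, whenever $\sigma(i)$ and $\sigma(i+1)$ sit at distinct levels, $\ell(\sigma(i+1))-\ell(\sigma(i))\leq a-1$. Next I split into two cases according to the relative position of $\sigma(i)$ and $\sigma(i+1)$ in $\Lambda$. Writing $\sigma(i)=\pi(p)$ and $\sigma(i+1)=\pi(q)$, if $p<q$ (so $\sigma(i)$ lies to the left of $\sigma(i+1)$) then~\eqref{eq:wdef} gives $w(\sigma(i),\sigma(i+1))=\ell(\sigma(i+1))-\ell(\sigma(i))$ when the difference lies in $[1-a,a-1]$ and $a-1$ otherwise, so the gap lemma forces the level difference to be exactly $a-1$. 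If $p>q$, then~\eqref{eq:wdef2} shows $w(\sigma(i),\sigma(i+1))=a-1$ requires $w(\sigma(i+1),\sigma(i))=-\infty$, which by~\eqref{eq:wdef} means $\ell(\sigma(i+1))-\ell(\sigma(i))>a-1$, contradicting the gap lemma. Hence $w(\sigma(i),\sigma(i+1))=a-1$ if and only if $\sigma(i)$ precedes $\sigma(i+1)$ in $\Lambda$ and their levels differ by exactly $a-1$.

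To finish, set $\ell=\ell(\sigma(i))$. The $\sigma$-consecutiveness of $\sigma(i),\sigma(i+1)$ at the levels $\ell$ and $\ell+a-1$ is precisely the statement that $\ell$ and $\ell+a-1$ are successive used levels of $\Lambda$ (no level strictly in between is used), while the left-to-right tie-breaking inside a single level forces $\sigma(i)$ to be the rightmost up step at level $\ell$ and $\sigma(i+1)$ to be the leftmost up step at level $\ell+a-1$. The condition ``$\sigma(i)$ precedes $\sigma(i+1)$ in $\Lambda$'' therefore reads ``the last up step of $\Lambda$ at level $\ell$ precedes the first up step at level $\ell+a-1$'', matching the corollary. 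The converse is immediate: the two extremal up steps at any such pair of successive levels form a $\sigma$-consecutive pair for which $w=a-1$. The only substantive obstacle is the gap lemma on used levels; once that is in place the rest is a careful unpacking of the definitions.
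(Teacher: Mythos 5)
Your proposal is correct and takes essentially the route the paper intends: the paper offers no separate argument, presenting the corollary as a direct rephrasing of Proposition~\ref{prop:fcbounded} via Lemma~\ref{lem:sigma}, and your write-up supplies exactly that translation through~\eqref{eq:wdef}--\eqref{eq:wdef2}, with the ``gap lemma'' (successive up-step levels differ by at most $a-1$) being the same fact the paper itself invokes inside the proof of Proposition~\ref{prop:gl}. No essential step is missing.
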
  
As noted in Corollary~\ref{cor:Catalan}, the number of possible types of
the trees of the gain function is a Catalan number.
\begin{conjecture}
\label{conj:Catpoly}  
For a fixed $n$ and a fixed tree of gain functions, the number of regions of
$\mathcal{A}^{a,a}_{n-1}$ associated to it is a polynomial of $a$.
\end{conjecture}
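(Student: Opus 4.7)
I plan to fix a tree of gain functions $T$, that is, a noncrossing rooted tree on $\{1,\ldots,n\}$ with root at position $1$, together with the permutation $\sigma$ labelling its vertices. Since $\mathcal{A}^{a,a}_{n-1}$ is a separated integral deformation of the braid arrangement satisfying the weak triangle inequality by Proposition~\ref{prop:trawt}, Theorem~\ref{thm:posweight} and Lemma~\ref{lem:parentf} imply that every valid $m$-acyclic weighted digraph whose tree of the gain function is $T$ is uniquely determined by its tree edge weights $d_e := g(v) - g(p(v)) \in \{0,1,\ldots,a-1\}$, one per edge $e=(p(v),v)$. For each non-tree pair $(u,v)$ with $p(v) <_{\sigma^{-1}} u <_{\sigma^{-1}} v$ (a ``case 3'' pair), Lemma~\ref{lem:parentf} forces $w(u,v) = g(v)-g(u)-1$: the lower bound $w(u,v)\geq 0$ becomes $g(v)-g(u)\geq 1$, whereas the upper bound $w(u,v)\leq a-1$ is automatic, because (by the noncrossing property) the least common ancestor of $u$ and $v$ in $T$ is $p(v)$, so $g(v)-g(u) = d_{(p(v),v)} - \sum_{e \in T(p(v),u)} d_e \leq d_{(p(v),v)} \leq a-1$. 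Weights on the remaining non-tree pairs (``case 2'') and on all reverse arrows are similarly read off from $g$, so the counting problem becomes purely combinatorial.

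Write $\Phi_T(a)$ for the number of regions whose gain tree is $T$. By the above reduction, $\Phi_T(a)$ equals the number of integer vectors $(d_e)\in\{0,\ldots,a-1\}^{E(T)}$ satisfying the integer-coefficient linear inequalities $L_{(u,v)}(d) := d_{(p(v),v)} - \sum_{e \in T(p(v),u)} d_e \geq 1$, one per case~3 pair. I would compute this by inclusion-exclusion: $\Phi_T(a) = \sum_{S} (-1)^{|S|} N_S(a)$, where $S$ ranges over subsets of the set of case~3 pairs and $N_S(a)$ counts $d \in \{0,\ldots,a-1\}^{E(T)}$ with $L_{(u,v)}(d) \leq 0$ for all $(u,v)\in S$. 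Because each such $L_{(u,v)}\leq 0$ is homogeneous with integer coefficients and $[0,a-1]^{E(T)} = (a-1)\cdot[0,1]^{E(T)}$, the count $N_S(a)$ equals $|(a-1)\cdot P_S \cap \mathbb{Z}^{E(T)}|$, where $P_S := [0,1]^{E(T)} \cap \bigcap_{(u,v)\in S} \{L_{(u,v)}\leq 0\}$ is a rational polytope independent of $a$. By Ehrhart's theorem, $N_S(a)$ is a quasi-polynomial in $a-1$, and hence so is $\Phi_T(a)$.

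The principal obstacle is to upgrade ``quasi-polynomial'' to ``polynomial'', which reduces to showing that each $P_S$ is a \emph{lattice} polytope. The defining constraint matrix of $P_S$ consists of standard basis vectors (for the cube facets) together with the coefficient vectors of the $L_{(u,v)}$, each of which carries a single $+1$ in the column of the tree edge $(p(v),v)$ and $-1$'s along the columns of the tree path from $p(v)$ down to $u$. This tree-path sign pattern is exactly the structure of a network matrix, which is totally unimodular; once total unimodularity is verified here, the vertices of $P_S$ lie in $\mathbb{Z}^{E(T)}$ and $\Phi_T(a)$ is a genuine polynomial in $a$. The delicate point is that several case~3 pairs can share overlapping tree paths, and one must verify that this interaction does not destroy total unimodularity. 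A robust fallback is an induction on $T$: if the root has children $c_1 <_{\sigma^{-1}} \cdots <_{\sigma^{-1}} c_k$ rooting subtrees $T_1,\ldots,T_k$, refine $\Phi_{T_i}(a)$ by the maximum gain attained in $T_i$, and combine the $k$ refined counts using the sibling case~3 constraints $g(c_j) > \max_{u\in T_{c_i}} g(u)$ for $i<j$; polynomiality is preserved at each recursive step and ultimately yields the polynomial $\Phi_T(a)$.
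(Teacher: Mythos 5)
This statement is a conjecture in the paper --- no proof is given there --- so your argument has to stand entirely on its own, and it does not: its very first reduction is false. Lemma~\ref{lem:parentf} pins down $w(u,v)$ only for pairs with $p(v)<_{\sigma^{-1}}u<_{\sigma^{-1}}v$; for a pair with $u<_{\sigma^{-1}}p(v)$ (your ``case 2'') the weight $w(u,v)$ is \emph{not} a function of the gain function $g$: it is merely confined to an interval, which in general contains more than one integer. Concretely, take $\mathcal{A}^{a,a}_{3}$ with $a\ge 3$, gains order $\sigma$ equal to the identity, and the noncrossing tree $p(2)=p(3)=1$, $p(4)=3$ with tree-edge weights $w(1,2)=0$, $w(1,3)=1$, $w(3,4)=1$. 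Then Lemma~\ref{lem:parentf} forces $w(2,3)=0$, and the three-cycle conditions together with the parent condition force $w(1,4)=2$, but $w(2,4)$ may be taken to be either $1$ or $2$: in both cases every triple $i<_{\sigma^{-1}}j<_{\sigma^{-1}}k$ satisfies $w(i,k)\ge\min\bigl(a-1,\,w(i,j)+w(j,k)\bigr)$ and $w(i,k)\le w(i,j)+w(j,k)+1$, so by Theorem~\ref{thm:a-bleq1} both weighted digraphs are valid and $m$-acyclic, and in both cases the gain function is $(g(1),g(2),g(3),g(4))=(0,0,1,2)$ with the same rightmost maximizers, hence the same tree $T$. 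Two distinct regions therefore share the same tree and the same vector $(d_e)$, so your map to $\{0,\ldots,a-1\}^{E(T)}$ is not injective and the quantity $\Phi_T(a)$ you compute undercounts the regions attached to $T$.

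Because of this, everything downstream --- the inclusion--exclusion, the Ehrhart step, and the total-unimodularity argument (which, incidentally, would indeed involve a network matrix and hence be fine \emph{if} the reduction were valid) --- is applied to the wrong lattice-point problem. A correct formulation must retain all $\binom{n}{2}$ weights $w(i,j)$ for $i<_{\sigma^{-1}}j$, subject to $0\le w(i,j)\le a-1$, the three-cycle conditions above (note the disjunction hidden in the $\min$, i.e.\ ``either $w(i,k)=a-1$ or $w(i,k)\ge w(i,j)+w(j,k)$''), and the partly strict ``rightmost maximizer'' conditions that single out $T$. In that setting Ehrhart-type arguments yield only quasi-polynomiality in $a$, since $a$ enters both as a dilation parameter and inside the case distinction $w(i,k)=a-1$; ruling out the periodicity is exactly the content of the conjecture. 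Nor does the unimodularity hope transfer: systems of triangle-type inequalities $w(i,k)-w(i,j)-w(j,k)\le 1$ are not totally unimodular in general (metric polytopes have non-integral vertices). The ``robust fallback'' induction rests on the same false premise and is too sketchy to repair the argument, so the conjecture remains open.
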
  
Conjecture~\ref{conj:Catpoly} implies that the
$n$-th $a$-Catalan number, considered as a polynomial of $a$, could be
written as a sum of $C_n$ polynomials, where $C_n$ is the $n$-th Catalan
number.  
\begin{example}
\label{ex:Catpoly}
{\em Consider the case $n=3$, that is, the $a$-Catalan arrangement
  $\mathcal{A}^{a,a}_{2}$. Without loss of generality we may restrict our
  attention to the case when $\sigma$ is the identity permutation, that
  is, $x_1>x_2>x_3$ holds. There are $C_2=2$ noncrossing trees
  rooted at $1$ on the set $\{1,2,3\}$ that do not contain backward
  edges. The first rooted tree $T_1$ is underlying to the directed path
  $1\rightarrow 2\rightarrow 3$, the second rooted tree $T_2$ is
  underlying to the pair of 
  of edges $\{1\rightarrow 2,1\rightarrow 3\}$. Note that both trees
  contain the edge $1\rightarrow 2$: 
  without using backward edges, this is the only way we can move from
  vertex $1$ to vertex $2$. The distinction we need to make is based on
  the cost of the most gainful path from $1$ to $3$. A valid $m$-acyclic
  weighted digraph belongs to the tree $T_1$ if and only if
  \begin{equation}
  \label{eq:T1}  
  w(1,2)+w(2,3)\geq w(1,3) \quad\mbox{holds}.
  \end{equation}
  Equality is
  allowed for the weighted digraphs associated to $T_1$, because by 
  Definition~\ref{def:treedef} in case of equality we would still select
  $2$ as the parent of $3$. For any valid
  weighted digraph, the directed cycles of length $2$ are not
  $m$-ascending (the sum of the weights is $-1$), and there are at most
  $2$ directed cycles of length $3$: the cycle $1\rightarrow
  3\rightarrow 2\rightarrow 1$ and the cycle $1\rightarrow
  2\rightarrow 3\rightarrow 1$. The first cycle $1\rightarrow
  3\rightarrow 2\rightarrow 1$  exists if and only if $0\leq w(1,2),
  w(2,3)\leq a-2$ holds. The $m$-acyclic property for this cycle is
  equivalent to requiring $-1-w(1,2)-1-w(2,3)+w(1,3)\leq -1$, that is,
  $w(1,3)\leq w(1,2)+w(2,3)+1$. Note that this condition is a
  consequence of \eqref{eq:T1}, hence when we count the valid weighted
  digraphs associated to $T_1$, we do not need to check the $m$-acyclic
  property for this directed cycle.  The second cycle exist if and only
  if  $0\leq   w(1,3)\leq a-2$ holds, if it exists, its weight is 
  $$w(1,2)+w(2,3)+w(3,1)=w(1,2)+w(2,3)-1-w(1,3). 
  $$
This weight is at most $-1$ if and only if $w(1,3)\geq w(1,2)+w(2,3)$
holds.

{\bf\noindent Case 1:} $w(1,3)=a-1$. In this case the directed
cycle $1\rightarrow 2\rightarrow 3\rightarrow 1$ is not present.
Let us choose the values of $w(1,2)$ and $w(2,3)$ 
  from the set $\{0,\ldots,a-1\}$ independently. These values yield a
  valid $m$-acyclic digraph associated to $T_1$ if and only if
  $w(1,2)+w(2,3)\geq a-1$ holds. There are $a^2$ ways to select the pair
  $(w(1,2),w(2,3))$ and of these $\binom{a}{2}$ satisfy
  $w(1,2)+w(2,3)\leq a-2$. (This binomial coefficient is the number of
  nonnegative integer solutions of $z_1+z_2+z_3=a-2$.) We obtain that in
  this case the number of weighted $m$-acyclic digraphs associated to $T_1$ is $a^2-\binom{a}{2}$.   

  {\bf\noindent Case 2:} $0\leq w(1,3)\leq a-2$ holds. In this case the directed
cycle $1\rightarrow 2\rightarrow 3\rightarrow 1$ is present and the
$m$-acyclic property for this cycle, combined with \eqref{eq:T1} yields
\begin{equation}
\label{eq:T1c2}  
  w(1,3)=w(1,2)+w(2,3).
\end{equation}
  The number of valid $m$-acyclic weighted digraphs associated to $T_1$
  in this case is the number of ways to select $w(1,3)\in
  \{0,\ldots,a-2\}$ and then write it as a sum of two nonnegative
  integers. This number is again the number of nonnegative integer
  solutions of $z_1+z_2+z_3=a-2$, that is, $\binom{a}{2}$.

Summing over the two cases yields that the number of valid $m$-acyclic
digraphs associated to $T_1$ is simply $a^2$, a polynomial of $a$. Since
the number of all valid $m$-acyclic weighted digraphs is also a
polynomial of $a$, the validity of Conjecture~\ref{conj:Catpoly} for
$T_2$ follows by subtraction.   
} 
\end{example}
The reader is invited to find a simpler proof for the case when
$n=3$. The author used a similar, but more tedious
case-by-case analysis, to show directly that the number of valid
weighted digraphs associated to $T_2$ is also a polynomial of $a$. He also
worked out the contribution of some trees in the $n=4$ and $n=5$
case. In each step of each such logical analysis, polynomials of $a$
are added or subtracted.

\section*{Acknowledgments}
This work was partially supported by a grant from the Simons Foundation
(\#514648 to G\'abor Hetyei). The author thanks Christos
Athanasiadis, Samuel Hopkins, Alex Lazar and Thomas Zaslavsky for useful
advice. The author is indebted to two anonymous referees for suggesting
substantial improvements to this paper.

\end{document}